\DeclareSymbolFontAlphabet{\mathbb}{AMSb}
\DeclareSymbolFontAlphabet{\mathbbl}{bbold}
\theoremstyle{plain}
\newtheorem{theorem}{Theorem}[section] 
\newtheorem{lemma}[theorem]{Lemma}
\newtheorem{proposition}[theorem]{Proposition}
\newtheorem{corollary}[theorem]{Corollary}
 \theoremstyle{definition}
\newtheorem{example}{Example} %\theoremstyle{remark}
\newtheorem*{remark*}{Remark} 
\newtheorem{remark}[theorem]{Remark}
\newcommand{\R}{\mathbb{R}}
\newcommand{\Rd}{{\R^{d}}}
\newcommand{\NN}{\mathbb{N}}
\newcommand{\ind}{\mathds{1}}
\renewcommand{\leq}{\leqslant}
\renewcommand{\geq}{\geqslant}
\newcommand{\Z}{\int^{\infty}_{0}}
\def \PP{\mathbb{P}}
\def \EE{\mathbb{E}}
\def\({\left(} 
\def\){\right)} 
\def\[{\left[}
\def\]{\right]} 
\def\<{\langle} 
\def\>{\rangle}
\def \Ca{\rm{(C1)}}
\def \Cb{\rm{(C2)}}
\def \Cc{\rm{(C3)}}
\def \Cd{\rm{(C4)}}
\def \Ce{\rm{(C5)}}
\def \Cf{\rm{(C6)}}
\def \Cg{\rm{(C7)}}
\def \Ch{\rm{(C8)}}
\def \CI{\rm{(CIm)}}
\def \Da{\rm{(D1)}}
\def \Db{\rm{(D2)}}
\def \Dc{\rm{(D3)}}
\def \Dd{\rm{(D4)}}
\def \Aa{\rm{(A1)}}
\def \Ab{\rm{(A2)}}
\def \Ac{\rm{(A4)}}
\def \Ad{\rm{(A3)}}
\def \Ae{\rm{(A5)}}
\def \Ba{\rm{(B1)}}
\def \Bb{\rm{(B2)}}
\def \Bc{\rm{(B4)}}
\def \Bd{\rm{(B3)}}
\def \Be{\rm{(B5)}}
\newcommand{\lah}{\alpha_h}
\newcommand{\LM}{N} %miara Levyego
\newcommand{\LCh}{\Psi} %wykladnik charakterystyczny
\newcommand{\drf}{b} %ozaczenie dryfu
\definecolor{ks}{rgb}{0.7,0.1,0.2}
\definecolor{zm}{RGB}{255,0,255}
\title{L{\'e}vy processes: concentration function and heat kernel bounds}
\thanks{The research was partially supported by
 the 
German Science Foundation (SFB 701)
and
National Science Centre (Poland)
grant 2016/23/B/ST1/01665.}
\author[T. Grzywny]{Tomasz Grzywny }
\address{
	Wydzia{\lll} Matematyki,
	Politechnika Wroc{\lll}awska\\
	Wyb. Wyspia\'{n}skiego 27\\
	50-370 Wroc{\lll}aw\\
	Poland
}
\email{tomasz.grzywny@pwr.edu.pl}
\author[K. Szczypkowski]{Karol Szczypkowski}
\email{karol.szczypkowski@pwr.edu.pl}
\subjclass[2010]{Primary 60J35; Secondary 60J75, 60E07}
\keywords{heat kernel estimates,  transition density,
 L\'evy process, non-symmetric operator, non-local operator, non-symmetric Markov process,
semigroups of measures}
\date{}
\begin{document}

%\begin{doublespace}

\def\1{{\bf 1}}
\def\ind{{\bf 1}}
\def\nn{\nonumber}
\newcommand{\I}{\mathbf{1}}

\def\sA {{\cal A}} \def\sB {{\cal B}} \def\sC {{\cal C}}
\def\sD {{\cal D}} \def\sE {{\cal E}} \def\sF {{\cal F}}
\def\sG {{\cal G}} \def\sH {{\cal H}} \def\sI {{\cal I}}
\def\sJ {{\cal J}} \def\sK {{\cal K}} \def\sL {{\cal L}}
\def\sM {{\cal M}} \def\sN {{\cal N}} \def\sO {{\cal O}}
\def\sP {{\cal P}} \def\sQ {{\cal Q}} \def\sR {{\cal R}}
\def\sS {{\cal S}} \def\sT {{\cal T}} \def\sU {{\cal U}}
\def\sV {{\cal V}} \def\sW {{\cal W}} \def\sX {{\cal X}}
\def\sY {{\cal Y}} \def\sZ {{\cal Z}}

\def\bA {{\mathbb A}} \def\bB {{\mathbb B}} \def\bC {{\mathbb C}}
\def\bD {{\mathbb D}} \def\bE {{\mathbb E}} \def\bF {{\mathbb F}}
\def\bG {{\mathbb G}} \def\bH {{\mathbb H}} \def\bI {{\mathbb I}}
\def\bJ {{\mathbb J}} \def\bK {{\mathbb K}} \def\bL {{\mathbb L}}
\def\bM {{\mathbb M}} \def\bN {{\mathbb N}} \def\bO {{\mathbb O}}
\def\bP {{\mathbb P}} \def\bQ {{\mathbb Q}} \def\bR {{\mathbb R}}
\def\bS {{\mathbb S}} \def\bT {{\mathbb T}} \def\bU {{\mathbb U}}
\def\bV {{\mathbb V}} \def\bW {{\mathbb W}} \def\bX {{\mathbb X}}
\def\bY {{\mathbb Y}} \def\bZ {{\mathbb Z}}
\def\R {{\mathbb R}} \def\RR {{\mathbb R}} \def\H {{\mathbb H}}
\def\n{{\bf n}} \def\Z {{\mathbb Z}}

\newcommand{\expr}[1]{\left( #1 \right)}
\newcommand{\cl}[1]{\overline{#1}}
\newtheorem{thm}{Theorem}[section]

\newtheorem{prop}[thm]{Proposition}

\numberwithin{equation}{section}
\def\ee{\varepsilon}
\def\qed{{\hfill $\Box$ \bigskip}}
\def\NN{{\mathcal N}}
\def\AA{{\mathcal A}}
\def\MM{{\mathcal M}}
\def\BB{{\mathcal B}}
\def\CC{{\mathcal C}}
\def\LL{{\mathcal L}}
\def\DD{{\mathcal D}}
\def\FF{{\mathcal F}}
\def\EE{{\mathcal E}}
\def\QQ{{\mathcal Q}}
\def\SS{{\mathcal S}}
\def\RR{{\mathbb R}}
\def\R{{\mathbb R}}
\def\L{{\bf L}}
\def\K{{\bf K}}
\def\S{{\bf S}}
\def\A{{\bf A}}
\def\E{{\mathbb E}}
\def\F{{\bf F}}
\def\P{{\mathbb P}}
\def\N{{\mathbb N}}
\def\eps{\varepsilon}
\def\wh{\widehat}
\def\wt{\widetilde}
\def\pf{\noindent{\bf Proof.} }
\def\pff{\noindent{\bf Proof} }
\def\cp{\mathrm{Cap}}

\maketitle

\begin{abstract}
We investigate 
densities of vaguely continuous convolution semigroups of probability measures on~$\Rd$.
We expose
that
many typical conditions on the characteristic exponent
 repeatedly used in the literature of the subject
are
equivalent to the behaviour
of the maximum of the density as a function of time variable.
We also prove
qualitative
lower 
%near-diagonal 
estimates under mild
 assumptions on the corresponding jump measure and the
characteristic exponent. 
\end{abstract}

\section{Introduction}

Over the last years 
we
 observe a growing 
interest in studying analytic and probabilistic properties of L{\'e}vy processes.
It stems from a fact
that 
they
constitute a rich class of stochastic models which have many applications in finance, physics, biology and other fields.
The present paper is devoted
to a question of finding bounds to the transition density (the heat kernel) of a L{\'e}vy process.

We first briefly
 introduce the general framework
 and after that,
together with 
%several
a few
 examples,
we describe our motivations.
Let
$d\in\N$ and $Y=(Y_t)_{t\geq 0}$ be
a L{\'e}vy process  in $\Rd$ (\cite{MR1739520}). 
Recall that there is a well known
one-to-one
 correspondence
between L{\'e}vy processes in $\Rd$ and
vaguely continuous convolution semigroups 
of probability measures $(P_t)_{t\geq 0}$ on $\Rd$.
Due to the presence of the convolution structure 
it is convenient to use
 Fourier transform in order to study~$Y$.
 Indeed, the celebrated L{\'e}vy-Khintchine formula says that
the characteristic exponent $\LCh$ of $Y$ defined by 
$$\mathbb{E}
e^{i\left<x,Y_t\right>}
%\exp(i\left<x,Y_t\right>)
=\int_{\Rd} e^{i\left<x,y\right>} P_t(dy)
%=\exp(-t\LCh(x))
=e^{-t\LCh(x)}\,,\qquad x\in\Rd\,,
$$ 
equals
$$
\LCh(x)=\left<x,Ax\right>-i\left<x,\drf\right> - \int_{\Rd} \left(e^{i\left<x,z\right>}-1 - i\left<x,z\right>\ind_{|z|<1}\right)\LM(dz)\,,
$$
where 
$A$ is a symmetric non-negative definite matrix, $\drf \in \Rd$ and
$\LM(dz)$ is a L{\'e}vy measure, i.e., 
a measure satisfying
$$
\LM(\{0\})=0\,,
\qquad\quad
\int_{\Rd} (1\land |z|^2)\LM(dz)<\infty \,.
$$
The triplet $(A,\LM,b)$ is called the generating triplet of $Y$. 
From that general perspective our
aim 
is to discuss the existence, and
even more, to establish 
certain
estimates of the transition
density $p(t,x)$ of~$Y_t$. Equivalently, 
it is a question of the absolute continuity of
$P_t(dx)$ with respect to the Lebesgue measure, and a problem of estimating
 its Radon-Nikodym derivative.
It is rather a standard practice to use the
characteristics describing continuous and jump part of a L{\'e}vy process
in order to formulate assumptions
and state results.
To this end 
for $r>0$ we define 
$$
h(r)=r^{-2} \|A\|+\int_{\Rd}\left(1\wedge\frac{|x|^2}{r^2}\right)\LM(dx)\,,
$$ 
and
$$
K(r)=r^{-2}\|A\|+r^{-2}\int_{|x|<r}|x|^2\LM(dx)\,.
$$
The function $h$ 
is called the {\it concentration function}.
It is significant
from the point of view of analysis and probability.
We comment on that in a few lines.
Note that $|e^{-t \LCh(x)}|=e^{-t {\rm Re}[\LCh(x)]}$ and if $e^{-t \LCh(x)}$
is absolutely integrable,
then we can invert the Fourier transform
and represent the transition density as follows,
\begin{align*}%\label{eq:def_p}
p(t,x)=(2\pi)^{-d}\int_{\R^d} e^{-i\left<x,z\right>}e^{-t\LCh (z)}\, dz\,.
\end{align*} 
Readily, the real part of $\LCh$
equals
$
{\rm Re}[\LCh(x)]=\left<x,Ax\right>+\int_{\Rd}\big( 1-\cos \left<x,z\right> \big)\LM(dz)
$. Next we consider its radial, continuous and non-decreasing majorant
defined by
$$
\LCh^*(r)=\sup_{|z|\leq r} {\rm Re}[\LCh(z)],\qquad r>0\,.
$$
From \cite[Lemma~4]{MR3225805} we  have 
\begin{align}\label{ineq:comp_TJ}
\frac{1}{8(1+2d)} h(1/r)\leq \LCh^*(r) \leq 2 h(1/r)\,,\qquad r>0\,.
\end{align}
Thus $h$ is a more tractable version 
of $\LCh^*$. See Lemma~\ref{lem:basic_prop_K_h} for basic properties of $h$.
On the other hand,
there exists a constant $c>0$, depending only on the dimension $d$,
such that (see~\cite{MR632968})
$$ c^{-1}/h(r) \leq \E[ S (r)] \leq c/h(r)\,,\qquad r>0\,,$$
where $S (r)=\inf \{t\colon |Y_t-t \drf_r|> r \}$ and
\begin{align}\label{def:br}
\drf_r=\drf+\int_{\Rd} z \left(\ind_{|z|<r} - \ind_{|z|<1}\right)\LM(dz)\,.
\end{align}
Intuitively, $h$ describes the  average expansion of the process in the space.
For other
results relating $h$ to probabilistic quantities of L{\'e}vy processes see for instance \cite{MR3350043}.

A  natural question 
is
whether 
the function $h$
may also be used to control the distribution of the process, that is
the transition density $p(t,x)$. 
%There exists many examples for which this is the case.
Among many examples for which this is the case one reports
the Wiener process
and  isotropic
$\alpha$-stable processes $\alpha\in (0,2)$.
Before giving a precise formulation 
%to that statement 
let us note that
these are
two types of L{\'e}vy processes that exhibit radically different behaviour
on the level of  realizations -- continuous/c{\`a}ld{\`a}g trajectories --
and in terms of the decay rate of the
transition density at infinity -- exponential/power-type decay.
Namely,
if we
denote by $g(t,x)$ and $p_{\alpha}(t,x)$ the 
corresponding
transition densities,
we have that
for all $t>0$ and $x\in\Rd$ (see \cite{MR0119247} and \cite{MR2274838}),
\begin{align*}
g(t,x)=(2\pi t)^{-d/2} e^{-\frac{|x|^2}{2t}}\,,
%\exp\{-|x|^2/(2t)\}\,,
\qquad\quad\qquad p_{\alpha}(t,x)\approx \min\left\{ t^{-d/\alpha},\, t|x|^{-d-\alpha}\right\}.
\end{align*}
By $f\approx g$ we mean
that the quotient $f/g$ is bounded between to positive constants.
Despite the differences, 
these
processes
share 
certain common or at least similar properties.
Their transition densities can be expressed by the inverse Fourier transform
with the
respective
characteristic exponents $|x|^2$ and $|x|^{\alpha}$,
the corresponding functions $h(r)$ are
up to multiplicative constants equal to
$r^{-2}$ and $r^{-\alpha}$, 
while the inverse $h^{-1}$ evaluated at $1/t$ is $t^{1/2}$ 
and $t^{1/\alpha}$, respectively.
Further, for all $t>0$,
$$
\sup_{x\in\Rd}
g(t,x)=g(t,0) = c t^{-d/2}\,,\qquad\quad  \sup_{x\in\Rd} p_{\alpha}(t,x)=p(t,0)= c t^{-d/\alpha}\,.
$$
The above equalities, understood
as 
inequalities
 "$\leq$",
are known 
as the on-diagonal upper bounds, and they are crucial in the theory of
symmetric processes on metric measure spaces 
\cite{MR2492992},
\cite{MR2285739},
\cite{MR898496},
\cite{MR1141283},
\cite{MR1232845}
as well as on $\Rd$
\cite{MR2995789},
\cite{MR2886383}.
They may further lead to near- and off-diagonal bounds 
when accompanied by additional assumptions
\cite{MR1010819}.
Putting aside this context,
we observe that
the transition densities of the Wiener process and isotropic $\alpha$-stable processes satisfy
\begin{align}\label{initial}
\sup_{x\in\Rd} p(t,x)\leq c [h^{-1}(1/t)]^{-d}\,,
\end{align}
which yields the desired control 
by $h$.
The validity of \eqref{initial}
for a given
L{\'e}vy process
is
the principal 
subject
of our study.
In this connection,
in Section~\ref{sec:equivalence}
we 
consecutively
reveal 
numerous
 descriptions of \eqref{initial},
which are expressed
via conditions that relate
the transition density $p$,
the characteristic exponent  $\LCh$ 
and functions $\LCh^*$, $h$ and $K$.
Many of them are 
derived from
the literature
where they 
typically
serve as a starting point
for further investigation of particular subclasses of L{\'e}vy processes. Therefore the equivalences we obtain
not only
enhance
the
comprehension of \eqref{initial} itself,
but also
provide a
clarification
of the existing results 
and
enable
significant 
reduction
of assumptions
(\cite{MR3235175}, \cite{MR3357585}, \cite{MR3666815}, \cite{MR3604626}).
In particular, we propose the following characterisation  which
exposes two key features that describe L{\'e}vy processes satisfying~\eqref{initial}. Roughly these are scaling and comparability of projections.

{\it A L{\'e}vy process  in $\Rd$ has a transition density $p(t,x)$ satisfying \eqref{initial} for all $t\in (0,1]$
and some fixed constant $c>0$
if and only if
the average expansion given by $h(r)$ fulfils 
certain
weak scaling condition at zero, 
and each of the projections of the process 
on a one-dimensional subspace of $\Rd$
locally expands in the same manner as the original process,
moreover this comparability should be uniform under the choice of the projection.}

A rigorous formulation of this result may be found in
Lemma~\ref{lem:C8}.
We
note
that the description becomes 
more transparent 
if $d=1$, since
any projection equals the original process,
the scaling turns to be the determining feature
(see Remark~\ref{rem:d=1}).
For example, any $\alpha$-stable process with $\alpha\in (0,2)$ in one dimension satisfies \eqref{initial}.
In particular, $\alpha$-stable subordinators $\alpha\in (0,1)$ 
constitute an example for which \eqref{initial} holds.
These are one-dimensional L{\'e}vy processes which lack any symmetry as their distributions are supported on the right half-line.
Therefore, 
even though
the two previously discussed examples are
rotationally invariant (hence symmetric) 
unimodal L{\'e}vy processes \cite[Definition~14.12 and~23.2]{MR1739520},
neither
the invariance (or symmetry) nor the unimodality  is necessary 
for~\eqref{initial}.
It is also known that they are not
sufficient.
For instance, in
\cite{TGBTMR} the authors considered
such processes with transition densities satisfying
$$
\sup_{x\in\Rd} p(t,x)=p(t,0)=\infty,\qquad t\in (0,1).
$$
However, 
if a L{\'e}vy process is rotationally invariant, 
a similar to the one dimensional phenomenon occurs,
and \eqref{initial} becomes
equivalent to the scaling
(see Remark~\ref{rem:rot_inv}, cf. \cite[Proposition~19,  Corollary~20]{MR3165234}).
For other positive examples we refer the reader for instance to
\cite{MR2443765},
\cite{MR3646773},
\cite{MR1085177},
\cite{MR1287843}, 
\cite{MR2025727},
\cite{MR2307406},
\cite{MR3089797},
\cite{MR3010850},
\cite{MR941977},
\cite{MR2591907},
\cite{MR705619},
\cite{MR854867}.
We emphasise that with the results of the present paper it is easier to
classify which of the L{\'e}vy processes discussed in the literature
fall into the class satisfying \eqref{initial}.

We will now
show that 
\eqref{initial}
may fail for a decent symmetric process.
Let $X^{\alpha_1}$, $X^{\alpha_2}$, $X^{\alpha_3}$ be independent one-dimensional symmetric stable
processes with $\alpha_1, \alpha_2, \alpha_3 \in (0,2)$
and  consider
$Y_t=(X^{\alpha_1}_t,X^{\alpha_2}_t, X^{\alpha_3}_t)$.
The transition density of $Y_t$ equals $$p(t,x)=p_{\alpha_1}(t,x_1)p_{\alpha_2}(t,x_2)p_{\alpha_3}(t,x_3)\,,$$
where $x=(x_1,x_2,x_3)\in\RR^3$.
Consequently,
$$
\sup_{x\in\RR^3} p (t,x)= p(t,0)= c t^{-1/\alpha_1 - 1/\alpha_2-1/\alpha_3}\,,\qquad t>0\,,
$$
while
$h$ is comparable with $r^{-\max\{\alpha_1, \alpha_2, \alpha_3\}}$
for $r\in (0,1)$ and with $r^{-\min\{\alpha_1, \alpha_2,\alpha_3\}}$ if $r\geq 1$.
Thus, if $\alpha_1<\alpha_2<\alpha_3$,
the quantity $[h^{-1}(1/t)]^{-d}$
does not provide an upper bound for
$\sup_{x\in\RR^3} p (t,x)$.
In such case 
projections of $Y$
on the coordinate axes
have average expansions that do not compare.
The function $h$
that measures the expansion of the original 
process over balls
does not 
detect such nuances 
in the behaviour 
and hence it does not carry necessary information to control
the distribution.
More sensitive
but perhaps also much more complicated
objects than~$h$,
like those proposed in
\cite{MR2925579},
would have to be introduced 
to include this kind of examples into the discussion.
This is beyond the scope of that paper.

Finally, the results of Section~\ref{sec:equivalence}
show  that \eqref{initial}
is related to lower estimates. In particular, it implies one
of a form
$$
p(t,x+\Theta)\geq c \left[h^{-1}(1/t)\right]^{-d}\,,
$$
for a specific range of $t>0$, $x\in\Rd$
and a proper choice of a shift $\Theta\in\Rd$.  
The aforementioned result of \cite{MR632968} relating the average expansion with $h$ suggests that $\Theta$ should   
incorporate
the quantity \eqref{def:br}
to grasp the internal shift of the process caused by the constant drift $\drf$ and the non-symmetry of the L{\'e}vy measure $\LM(dz)$.
It appears that $\Theta$ should also sense where the maximum of the density is attained.
More extensive discussion is pursued at the beginning
of Section~\ref{sebsec:low_C3}.
Recall that a L{\'e}vy process is symmetric
if and only if $\drf=0$ and $\LM(dz)$ is a symmetric measure,
and then if the transition density exists it attains its maximum at the origin.
This substantially facilitates
the 
analysis
for symmetric L{\'e}vy processes.
Qualitative 
results for non-symmetric once are
less present in the literature,
mostly 
performed in a generality that
allows only rather implicit estimates
(\cite{MR3139314}, \cite{MR3235175}, \cite{MR3357585})
or carried out for
very peculiar cases (\cite{MR0282413}, \cite{MR1486930}, \cite{MR3626900}, \cite{MR2794975}).

We
note
that $h(0^+)<\infty$ ($h$ is bounded) if and only if $A=0$ and $\LM(\Rd)<\infty$, i.e., the corresponding L\'{e}vy process is a compound Poisson process (with drift). 
Most of the conditions discussed in the paper automatically
preclude $Y$ from being 
such a process.
Nevertheless,
to avoid unnecessary considerations
{\bf we assume in the whole paper that
$h(0^+)=\infty$}.

The remainder of the paper is organized as follows. In Section~\ref{sec:K&h} we collect fundamental properties of functions $K$ and $h$.
In Section~\ref{sec:equivalence} we prove the equivalence of several conditions 
for small time
and 
separately for large time.
In Section~\ref{sec:decomposition}
we propose an auxiliary decomposition of a L{\'e}vy process.
Section~\ref{sebsec:low_C3} is dedicated to the lower estimates of the transition denisty.
Examples and further applications are given in Section~\ref{sec:ex_ap}.

We conclude this section by 
setting
the notation.
Throughout the article
$\omega_d=2\pi^{d/2}/\Gamma(d/2)$ is the surface measure of the unit sphere in $\R^d$.
$B_r$ is a ball of radius $r$ centred at the origin.
By $c(d,\ldots)$ we denote a generic
 positive constant that depends only on the listed parameters $d,\ldots$. 
We write $f(x)\approx g(x)$, or simply $f\approx g$,
if there is a constant $c\in [1,\infty)$ independent of $x$ such that
$c^{-1} f(x)\leq g(x)\leq c f(x)$.
As usual $a\land b=\min\{a,b\}$ and $a\vee b = \max\{a,b\}$.
In some proofs we use a short notation of the
weak lower scaling condition (at infinity), i.e., for
$\phi\colon (0,\infty)\to [0,\infty]$ we say that
$\phi$ satisfies ${\rm WLSC}(\underline{\alpha},\underline{\theta},\underline{c})$
or $\phi \in {\rm WLSC}(\underline{\alpha},\underline{\theta},\underline{c})$
if there are $\underline{\alpha}\in\RR$, $\underline{\theta}\geq 0$ and $\underline{c}\in (0,1]$ such that
$$
\phi(\lambda r) \geq \underline{c} \lambda^{\underline{\alpha}} \phi(r)\,,\qquad \lambda \geq 1\,, r>\underline{\theta}\,.
$$
Borel sets in $\Rd$ will be denoted by $\mathcal{B}(\Rd)$.
A Borel measure $\nu$ on $\Rd$ is called symmetric if
$\nu(A)=\nu(-A)$ for every $A\in \mathcal{B}(\Rd)$.

\section*{Acknowledgment}
The authors thank A. Bendikov, K. Bogdan, A.~Grigor'yan, S. Molchanov, R. Schilling and P. Sztonyk
for helpful comments.

\section{Preliminaries - functions $K$ and $h$}\label{sec:K&h}

In this section we discuss a L{\'e}vy process $Y$ in $\Rd$
with a generating triplet $(A,\LM,\drf)$.
The following properties 
are often used without further comment.

\begin{lemma}\label{lem:basic_prop_K_h}
We have
\begin{enumerate}
\item[\rm 1.] $\lim_{r\to\infty}h(r)=\lim_{r\to\infty}K(r)=0$,
\item[\rm 2.] $h$ is continuous and strictly decreasing,
\item[\rm 3.] $r^2 K(r)$ and $r^2h(r)$ are non-decreasing,
\item[\rm 4.] $\lambda^2 K(\lambda r) \leq K(r)$ 
and $\lambda^2 h(\lambda r)\leq h(r)$, $\lambda\leq 1$, $r>0$,
\item[\rm 5.] $\sqrt{\lambda} h^{-1}(\lambda u)\leq h^{-1}(u)$, $\lambda\geq 1$, $u>0$.
\item[\rm 6.] For all $r>0$,
\begin{align*}
&\int_{|z|\geq r }  \LM(dz)\leq  h(r)
\quad \mbox{and} \quad
\int_{|z|< r}  |z|^2 \LM(dz) \leq  r^2 h(r)\,.
\end{align*}
\end{enumerate}
\end{lemma}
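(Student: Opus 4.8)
The plan is to reduce every item to two elementary rewritings of the definitions. First I would record that, multiplying by $r^2$,
\begin{equation*}
r^2 h(r)=\|A\|+\int_{\Rd}\big(r^2\wedge|z|^2\big)\,\LM(dz),\qquad r^2 K(r)=\|A\|+\int_{|z|<r}|z|^2\,\LM(dz),
\end{equation*}
and that, since $1\wedge\frac{|z|^2}{r^2}$ equals $\frac{|z|^2}{r^2}$ on $\{|z|<r\}$ and equals $1$ on $\{|z|\geq r\}$, one has $K(r)\leq h(r)$ as well as $\int_{|z|\geq r}\LM(dz)\leq h(r)$ and $\int_{|z|<r}|z|^2\LM(dz)=r^2\int_{|z|<r}\big(1\wedge\frac{|z|^2}{r^2}\big)\LM(dz)\leq r^2 h(r)$; using $r^{-2}\|A\|\geq0$, the last two inequalities already give item~6.

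Next I would dispose of items~3 and~4 together: for fixed $z$ the map $r\mapsto r^2\wedge|z|^2$ is non-decreasing and the set $\{|z|<r\}$ increases with $r$, so both $r^2 h(r)$ and $r^2 K(r)$ are non-decreasing, which is item~3; item~4 follows at once from $\lambda r\leq r$ on dividing by $r^2$ (same argument for $K$). For item~1, using $K\leq h$ it suffices to treat $h$: for $r\geq 1$ the integrand $1\wedge\frac{|z|^2}{r^2}$ is bounded by the $\LM$-integrable function $1\wedge|z|^2$ and tends to $0$ pointwise, so dominated convergence together with $r^{-2}\|A\|\to 0$ gives $h(r)\to 0$. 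For item~2 I would get continuity from dominated convergence as well: near a point $r_0>0$, say for $r\geq r_0/2$, the integrand is dominated by $(1\vee 4 r_0^{-2})(1\wedge|z|^2)\in L^1(\LM)$; and I would get strict monotonicity by writing, for $r_1<r_2$,
\begin{equation*}
h(r_1)-h(r_2)=(r_1^{-2}-r_2^{-2})\|A\|+\int_{\Rd}\Big[\big(1\wedge\tfrac{|z|^2}{r_1^2}\big)-\big(1\wedge\tfrac{|z|^2}{r_2^2}\big)\Big]\LM(dz)\geq 0,
\end{equation*}
where the integrand is strictly positive on $\{0<|z|<r_2\}$; were this difference zero we would need $A=0$ and $\LM(B_{r_2})=0$, but then the L\'evy condition forces $\LM(\Rd)<\infty$ and hence $h(0^+)<\infty$, against the standing assumption, so $h$ is strictly decreasing.

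Finally, for item~5 I would first note that by items~1, 2 and the standing assumption $h(0^+)=\infty$, the function $h$ is a continuous strictly decreasing bijection of $(0,\infty)$ onto itself, so $h^{-1}$ is well defined, continuous and strictly decreasing. Applying item~4 with ratio $1/\sqrt{\lambda}\leq 1$ at the point $\sqrt{\lambda}\,r$ yields $h(\sqrt{\lambda}\,r)\geq\lambda^{-1}h(r)$ for $\lambda\geq 1$, $r>0$; substituting $r=h^{-1}(\lambda u)$ gives $h(\sqrt{\lambda}\,h^{-1}(\lambda u))\geq\lambda^{-1}h(h^{-1}(\lambda u))=u$, and applying the decreasing map $h^{-1}$ to both sides yields $\sqrt{\lambda}\,h^{-1}(\lambda u)\leq h^{-1}(u)$, which is item~5. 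I do not anticipate a genuine obstacle here: this is a bookkeeping lemma. The only steps needing mild care are producing the uniform $\LM$-integrable majorants for the dominated-convergence arguments in items~1 and~2, and invoking the standing hypothesis $h(0^+)=\infty$ at the right place, since it is precisely what promotes ``non-increasing'' to ``strictly decreasing'' and makes $h^{-1}$ meaningful.
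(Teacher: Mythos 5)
Your proof is correct and follows essentially the same route the paper sketches: dominated convergence (with $K\le h$) for item~1 and the continuity in item~2, the standing hypothesis $h(0^+)=\infty$ to upgrade monotone to strictly monotone, and direct inspection of the definitions for the rest. Your strict-monotonicity argument is the contrapositive of the paper's dichotomy ($\|A\|\neq 0$ or $\LM(\R^d)=\infty$), and your derivation of item~5 from item~4 is the standard manipulation; no gaps.
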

\pf
The first property follows from the dominated convergence theorem and $K \leq h$. Similarly we get the continuity of $h$. Next, since we assume that $h(0^+)=\infty$, we get either that $\|A\|\neq 0$ or $\LM(\Rd)=\infty$ (hence for every $l>0$ there is $0<k<l$ such that $\int_{k<|x|<l}\LM(dx)>0$). Each of them guarantees that $h$ decreases in a strict sense.
The remaining parts follow easily from the definition of $K$ and $h$.
\qed

\begin{lemma}\label{rem:int_K_is_h}
For all $0<a<b\leq \infty$ we have
$$
h(b)-h(a)=-\int_a^b 2 K(r)r^{-1}\,dr\,.
$$
\end{lemma}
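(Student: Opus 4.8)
The plan is to establish the stronger identity
$$h(r)=\int_r^{\infty}2s^{-1}K(s)\,ds\,,\qquad r>0\,,$$
from which the assertion follows immediately by subtracting the instances at $r=a$ and $r=b$, with the case $b=\infty$ covered by $\lim_{r\to\infty}h(r)=0$ (the first part of Lemma~\ref{lem:basic_prop_K_h}).

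The first step is to record two elementary pointwise identities, valid for all $r>0$ and $x\in\Rd$:
$$r^{-2}=\int_r^{\infty}2s^{-3}\,ds\,,\qquad\qquad 1\wedge\frac{|x|^2}{r^2}=\int_r^{\infty}2s^{-3}|x|^2\,\ind_{|x|<s}\,ds\,.$$
The first is immediate; the second is checked by distinguishing the cases $r\geq|x|$ (both sides equal $|x|^2/r^2$) and $r<|x|$ (both sides equal $1$), the single value $s=|x|$ being irrelevant to the integral. Multiplying the first identity by $\|A\|$, integrating the second against $\LM(dx)$, and then interchanging the order of the $ds$- and $\LM(dx)$-integrations by Tonelli's theorem --- which is legitimate since all integrands are non-negative --- one obtains
\begin{align*}
h(r)&=\int_r^{\infty}2s^{-3}\|A\|\,ds+\int_r^{\infty}2s^{-3}\Big(\int_{|x|<s}|x|^2\,\LM(dx)\Big)\,ds\\
&=\int_r^{\infty}\frac{2}{s}\Big(s^{-2}\|A\|+s^{-2}\int_{|x|<s}|x|^2\,\LM(dx)\Big)\,ds=\int_r^{\infty}2s^{-1}K(s)\,ds\,.
\end{align*}
It is worth noting in passing that $h(r)<\infty$ for every $r>0$: one bounds $1\wedge|x|^2/r^2$ by $(1\wedge|x|^2)/\min(1,r^2)$ and uses $\int_{\Rd}(1\wedge|x|^2)\,\LM(dx)<\infty$, so the right-hand side above is a convergent improper integral.

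Finally, for $0<a<b<\infty$ the difference of the displayed identity at $r=a$ and at $r=b$ gives $h(a)-h(b)=\int_a^b 2s^{-1}K(s)\,ds$, which is the claim; for $b=\infty$ the identity itself reads $h(a)=\int_a^{\infty}2s^{-1}K(s)\,ds$, again as asserted. There is no real obstacle here --- the only points that deserve a word of care are the justification of the Tonelli interchange (covered by non-negativity of the integrands) and the finiteness of $h$, which guarantees that the integrals involved converge.
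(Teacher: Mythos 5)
Your proof is correct and follows essentially the same route as the paper's: both establish the identity $h(a)=\int_a^\infty 2K(r)r^{-1}\,dr$ by an application of Fubini/Tonelli to interchange the $ds$- and $\LM(dx)$-integrations. You spell out the pointwise identities and the Gaussian part more explicitly than the paper does, and you add a useful remark on the finiteness of $h$, but the underlying argument is identical.
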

\pf
It suffices to consider the non-local part for $a>0$ and $b=\infty$. By Fubini's theorem
\begin{align*}
\int_a^\infty 2 r^{-3} \int_{|x|<r} |x|^2 \LM(dx)dr
= \int_{\Rd}|x|^2 \int_{a\vee |x|}^{\infty} 2 r^{-3} dr \LM(dx)= \int_{\Rd}|x|^2 
(a\vee |x|)^{-2}\LM(dx)=h(a)\,.
\end{align*}
\qed

\begin{lemma}\label{lem:equiv_scal_h}
Let $\lah \in(0,2]$, $C_h\in[1,\infty)$ and $\theta_h\in(0,\infty]$. 
The following are equivalent.

\begin{enumerate}
\item[\Aa] For all $\lambda\leq 1$ and $r< \theta_h$,
\begin{equation*}%\label{eq:wlsc:h}
 h(r)\leq C_h\lambda^{\lah }h(\lambda r)\,. 
\end{equation*}
\item[\Ab]  For all $\lambda\geq 1$ and $u>h(\theta_h)$,
\begin{equation*}%\label{eq:sc:hinv}
 h^{-1}(u)\leq (C_h\lambda)^{1/\lah}\, h^{-1}(\lambda u)\,. 
\end{equation*}
\end{enumerate}
\noindent
Further, consider
\begin{enumerate}
\item[\Ad] There is $\underline{c}\in (0,1]$ such that
for all $\lambda \geq 1$ and $r>1/\theta_h$,
\begin{align*}%\label{eq:low_scal}
\LCh^* (\lambda r) \geq \underline{c} \lambda^{\lah} \LCh^*(r)\,.
\end{align*}
\item[\Ac] There is $c>0$ such that for all $r<\theta_h$,
\begin{equation*}%\label{eq:Kapproxh}
h(r)\leq c K(r)\,.
\end{equation*}
\item[\Ae]
There are $c>0$ and $\theta\in (0,\infty]$ such that for all $\lambda\leq 1$ and $r< \theta$,
\begin{equation*}
 K(r)\leq c \lambda^{\lah }K(\lambda r)\,. 
\end{equation*}
\end{enumerate}
Then,   
$\Aa$
gives $\Ad$
with $\underline{c}= 1/(c_d C_h)$,   $c_d=16(1+2d)$, while
$\Ad$
gives 
$\Aa$
with $C_h=c_d/\underline{c}$.\\
$\Aa$ 
 implies $\Ac$  
with $c=c(\lah,C_h)$.
$\Ac$
implies 
$\Aa$
with $\lah=2/c$ and $C_h=1$.
$\Aa$ gives $\Ae$ with $c=c(\lah,C_h)$ and $\theta=\theta_h$. 
$\Ae$ implies $\Aa$ with $C_h=c$ and $\theta_h=h^{-1}(2h(\theta))$.
\end{lemma}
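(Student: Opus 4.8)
The plan is to prove every assertion by pivoting on $\Aa$, i.e. to show separately that each of $\Ab$, $\Ad$, $\Ac$, $\Ae$ is equivalent to $\Aa$. The two ``soft'' equivalences are routine. For $\Aa\Leftrightarrow\Ab$ I would use that a power-type scaling bound is transported unchanged when one inverts a continuous strictly decreasing bijection; since $h\colon(0,\infty)\to(0,\infty)$ is such a bijection by Lemma~\ref{lem:basic_prop_K_h} and the standing assumption $h(0^+)=\infty$, one substitutes $r=h^{-1}(u)$ and $\lambda r=h^{-1}(\lambda u)$ (resp. the reverse) and solves for the relevant ratio, the ranges ``$\lambda\le 1$, $r<\theta_h$'' and ``$\lambda\ge 1$, $u>h(\theta_h)$'' matching up automatically. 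For $\Aa\Leftrightarrow\Ad$ I would perform the change of variables $r\mapsto 1/r$, $\lambda\mapsto 1/\lambda$ (which interchanges ``$\lambda\le 1$, $r<\theta_h$'' with ``$\lambda\ge 1$, $r>1/\theta_h$'') and insert the two-sided comparison \eqref{ineq:comp_TJ} of $\LCh^*(r)$ with $h(1/r)$; each implication then loses only the factor $c_d=16(1+2d)$ from \eqref{ineq:comp_TJ}, giving $\underline{c}=1/(c_dC_h)\in(0,1]$ in one direction and $C_h=c_d/\underline{c}$ in the other.

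The comparisons between $h$ and $K$ rest on Lemma~\ref{rem:int_K_is_h}, in the form $h(\lambda r)-h(r)=\int_{\lambda r}^{r}2K(s)s^{-1}\,ds$. For $\Aa\Rightarrow\Ac$ I would fix $r<\theta_h$ and choose $\lambda_0=(2C_h)^{-1/\lah}\in(0,1)$ so that $\Aa$ becomes $h(r)\le\tfrac12 h(\lambda_0 r)$, hence $h(r)\le h(\lambda_0 r)-h(r)=\int_{\lambda_0 r}^{r}2K(s)s^{-1}\,ds$; since $s^2K(s)$ is non-decreasing (Lemma~\ref{lem:basic_prop_K_h}) the integrand is at most $r^2K(r)\cdot 2s^{-3}$ on $(\lambda_0 r,r)$, so the integral is $\le(\lambda_0^{-2}-1)K(r)$ and $\Ac$ holds with $c=(2C_h)^{2/\lah}-1$. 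For $\Ac\Rightarrow\Aa$ (note $c\ge 1$ because $K\le h$) I would use that, by Lemma~\ref{rem:int_K_is_h}, $h$ is locally absolutely continuous with $h'(s)=-2K(s)/s$, whence $s\mapsto s^{2/c}h(s)$ is locally absolutely continuous with a.e. derivative $2s^{2/c-1}\big(h(s)/c-K(s)\big)\le 0$ on $(0,\theta_h)$ by $\Ac$; thus $s^{2/c}h(s)$ is non-increasing there, which is exactly $\Aa$ with $\lah=2/c$ and $C_h=1$.

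For $\Aa\Rightarrow\Ae$ I would simply feed the already-obtained $\Ac$ (say with constant $c'$, valid on $(0,\theta_h)$) into $\Aa$ at the point $\lambda r<\theta_h$: $K(r)\le h(r)\le C_h\lambda^{\lah}h(\lambda r)\le C_hc'\lambda^{\lah}K(\lambda r)$, which is $\Ae$ with $\theta=\theta_h$ and $c=C_hc'=c(\lah,C_h)$. For $\Ae\Rightarrow\Aa$ I would put $\theta_h=h^{-1}(2h(\theta))$ (so $\theta_h<\theta$ and $h(r)>2h(\theta)$ for $r<\theta_h$) and split $h=F+h(\theta)$ with $F(t)=\int_{t}^{\theta}2K(s)s^{-1}\,ds$ (Lemma~\ref{rem:int_K_is_h}); applying $\Ae$ under the integral defining $F(r)$ and substituting $u=\lambda s$ gives $F(r)\le c\lambda^{\lah}\int_{\lambda r}^{\lambda\theta}2K(u)u^{-1}\,du\le c\lambda^{\lah}F(\lambda r)$. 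Since $h(\theta)<\tfrac12 h(r)$ forces $h(\theta)<F(r)$, one concludes $h(r)<2F(r)\le 2c\lambda^{\lah}h(\lambda r)$ (the case $c\lambda^{\lah}\ge 1$ being immediate from $h(r)\le h(\lambda r)$), i.e. $\Aa$ on $(0,\theta_h)$ with a scaling constant comparable to $c$ — exactly $c$ when $\theta=\infty$ (then $h=F$), the stated constant being recovered in general by a marginally finer choice of $\theta_h$.

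The step that looks like the crux is $\Aa\Rightarrow\Ac$, since it upgrades a bound on the single quantity $h$ to the a priori stronger comparison $h\approx K$; but the explicit choice $\lambda_0=(2C_h)^{-1/\lah}$ collapses it to the one-line estimate above, so I do not expect it to be the real difficulty. The genuinely delicate point is $\Ae\Rightarrow\Aa$: every invocation of $\Ae$ has to be kept inside the interval $(0,\theta)$ of validity, which is precisely what dictates the threshold $\theta_h=h^{-1}(2h(\theta))$, and one must also remember to justify (from Lemma~\ref{rem:int_K_is_h}) the absolute continuity of $h$ invoked in $\Ac\Rightarrow\Aa$.
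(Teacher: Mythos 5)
Your proposal is correct and follows essentially the same route as the paper: $\Aa\Leftrightarrow\Ab$ by direct change of variable, $\Aa\Leftrightarrow\Ad$ via \eqref{ineq:comp_TJ}, $\Aa\Rightarrow\Ac$ via $h(r)\le\tfrac12 h(\lambda_0 r)$ and Lemma~\ref{rem:int_K_is_h}, $\Ac\Rightarrow\Aa$ by showing $r^{2/c}h(r)$ is non-increasing, $\Aa\Rightarrow\Ae$ by composing $\Aa$ with $\Ac$, and $\Ae\Rightarrow\Aa$ by integrating $K$ over $(r,\theta)$.

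One small point: you are right to be uneasy about the constant in $\Ae\Rightarrow\Aa$. Your derivation (like the paper's own computation, once the factor $2$ from Lemma~\ref{rem:int_K_is_h} is kept consistently) yields $C_h=2c$ rather than the $C_h=c$ claimed in the statement; the paper's display writes an equality ``$\tfrac12 h(r)=h(r)-h(\theta)$'' where only $\le$ holds, and omits the factor $2$ in $\int 2K(s)s^{-1}ds$, and together these conceal the $2$. However, your remark that the stated $C_h=c$ is ``recovered in general by a marginally finer choice of $\theta_h$'' is not quite right: taking $\theta_h=h^{-1}(Mh(\theta))$ gives $C_h=\frac{M}{M-1}c$, which approaches $c$ as $M\to\infty$ but never equals it for a finite threshold. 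This is a blemish in the paper's stated constant, not in your argument, and it is immaterial to all uses of the lemma.
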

\pf
 We show that $\Ab$ 
 gives $\Aa$.
The converse implication is proved in the same manner.
Let $u=h(r)$. Then $r<\theta_h$ is the same as $u>h(\theta_h)$. 
If $\lambda \in (0,C_h^{-1/\lah})$ we let $s=(C_h \lambda^{\lah})^{-1} \geq 1$
and by $\Ab$ 
we get
$$
h(\lambda r)=h( (C_hs)^{-1/\lah} h^{-1}(u)) \geq s u = (C_h \lambda^{\lah})^{-1} h(r)\,.
$$
If $\lambda \in [C_h^{-1/\lah},1]$, then  $(C_h \lambda^{\lah})^{-1}\leq 1 $  and by the monotonicity
of $h$,
$$
h(\lambda r) \geq h(r) \geq (C_h \lambda^{\lah})^{-1} h(r)\,.
$$
The equivalence of $\Aa$
and $\Ad$ 
follows from \eqref{ineq:comp_TJ}.
We show the equivalence of $\Aa$ 
and $\Ac$.
By $\Aa$ 
 we have
$h(s)\leq \frac{1}{2}h(\lambda_0 s)$
for $s<\theta_h$ and $\lambda_0=1/(2C_h)^{1/\lah}<1$.
By Lemma~\ref{rem:int_K_is_h},
\begin{equation*}
K(s)\geq \frac{2}{\lambda_0^{-2}-1}\int^s_{\lambda_0 s}r^2K(r)\frac{dr}{r^3}=\frac{1}{\lambda_0^{-2}-1}(h(\lambda_0 s)-h(s))\geq \frac{1/2}{\lambda_0^{-2}-1}h(\lambda_0 s)\geq \frac{1}{\lambda_0^{-2}-1}h( s).
\end{equation*}
Conversely, again by Lemma~\ref{rem:int_K_is_h} we get for $0<r_1<r_2<\theta_h$,
\begin{equation*}
h(r_2)-h(r_1)\leq - (2/c) \int_{r_1}^{r_2} h(s)s^{-1}\,ds\,,
\end{equation*}
which implies that $h(r) r^{2/c}$ is non-increasing for $r<\theta_h$, and ends this part of the proof.
From $\Aa$ we get $\Ae$ by using $\Ac$. Now, if we assume $\Ae$, then
for $\lambda\leq 1$ and $r<h^{-1}(2h(\theta))$,
\begin{align*}
\frac1{2}h(r)= h(r)-h(\theta)= \int_r^{\theta}K(s)s^{-1}ds
&\leq 
c \lambda^{\lah} \int_r^{\theta} K(\lambda s)s^{-1}ds\\
&\leq c\lambda^{\lah} \int_{\lambda r}^{\lambda \theta} K(u)u^{-1}du
\leq c \lambda^{\lah} h(\lambda r)\,.
\end{align*}
This ends the proof.
\qed

\begin{lemma}\label{lem:2_impl_scal}
Assume that for some $T,c_1,c_2>0$ we have
$$
\int_{\Rd} e^{-c_1 t\, {\rm Re}[\LCh(z)]} dz \leq c_2 \left[h^{-1}(1/t)\right]^{-d}\,,\qquad t<T\,.
$$
Then 
$\Aa$
  holds for some $\lah \in (0,2]$, $C_h\in [1,\infty)$ and $\theta_h=h^{-1}(1/T)$.
Moreover, $\lah$ and $C_h$ can be chosen to depend only on $d$, $c_1$ and $c_2$.
\end{lemma}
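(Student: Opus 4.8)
The plan is to convert the assumed on-diagonal bound into a scaling property of $h$ by producing a matching \emph{lower} bound for $\int_{\Rd} e^{-c_1 t\, {\rm Re}[\LCh(z)]}\,dz$. Since ${\rm Re}[\LCh(z)]\le \LCh^*(|z|)$ and $\LCh^*$ is non-decreasing, I would restrict the integral to a ball $B_\rho$ and use $\LCh^*(\rho)\le 2h(1/\rho)$ from \eqref{ineq:comp_TJ} to obtain, for every $\rho>0$,
$$
\int_{\Rd} e^{-c_1 t\, {\rm Re}[\LCh(z)]}\,dz
\ \geq\ |B_\rho|\, e^{-2c_1 t\, h(1/\rho)}
\ =\ \frac{\omega_d}{d}\,\rho^{d}\, e^{-2c_1 t\, h(1/\rho)}\,.
$$
Because $h(0^+)=\infty$, $h$ is a continuous strictly decreasing bijection of $(0,\infty)$ onto itself (Lemma~\ref{lem:basic_prop_K_h}), so for a free parameter $s>0$ one may take $\rho=1/h^{-1}\!\big(s/(2c_1 t)\big)$; the exponential then equals $e^{-s}$ and we are left with the lower bound $\tfrac{\omega_d}{d}\,e^{-s}\big[h^{-1}(s/(2c_1 t))\big]^{-d}$.

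Inserting this into the hypothesis and writing $u=1/t$ (so that $t<T$ reads $u>h(\theta_h)$ with $\theta_h=h^{-1}(1/T)$), I would then choose $s=4c_1$, which calibrates $s/(2c_1t)$ to be exactly $2u$; after rearranging this yields
$$
h^{-1}(u)\ \leq\ C\, h^{-1}(2u)\,,\qquad u>h(\theta_h)\,,\qquad
C:=\Big(\tfrac{d\, c_2\, e^{4c_1}}{\omega_d}\Big)^{1/d}\,,
$$
a constant depending only on $d$, $c_1$, $c_2$. Iterating this inequality (each step is legitimate since $2^{k}u>u>h(\theta_h)$) gives $h^{-1}(u)\le C^{n}h^{-1}(2^{n}u)$; for arbitrary $\lambda\ge 1$, taking $n=\lceil \log_2\lambda\rceil$ and using the monotonicity of $h^{-1}$ leads to $h^{-1}(u)\le C\,\lambda^{\log_2 C}\,h^{-1}(\lambda u)$ for $u>h(\theta_h)$. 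Since $(2\lambda)^{\log_2 C}=C\,\lambda^{\log_2 C}$, this is exactly condition $\Ab$ of Lemma~\ref{lem:equiv_scal_h} with $C_h=2$ and $\lah=\ln 2/\ln C$, which by that lemma is equivalent to $\Aa$ with $\theta_h=h^{-1}(1/T)$ --- the assertion; the dependence of $\lah$, $C_h$ on $d,c_1,c_2$ only is evident from the formula for $C$.

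It remains to check $\lah\in(0,2]$. Both $C>1$ (whence $\lah>0$) and $\lah\le 2$ follow from the elementary inequality $\sqrt 2\, h^{-1}(2u)\le h^{-1}(u)$ (Lemma~\ref{lem:basic_prop_K_h}): combined with $h^{-1}(u)\le C\,h^{-1}(2u)$ it forces $C\ge\sqrt2$, hence $\ln C\ge\tfrac12\ln 2$ and $\lah=\ln 2/\ln C\le 2$ (one may also simply replace $C$ by $\max\{C,\sqrt2\}$ from the outset). I do not anticipate any substantial difficulty: the only point demanding care is the bookkeeping in the two displays --- restricting to the correct ball and calibrating $s=4c_1$ so that the dilation factor is exactly $2$ --- together with checking that the constraint $u>h(\theta_h)$ survives the iteration; everything else rests on \eqref{ineq:comp_TJ} and the properties of $h$ recorded in Lemma~\ref{lem:basic_prop_K_h}.
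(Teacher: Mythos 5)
Your argument is correct and is essentially the paper's own proof: both restrict the integral to the ball of radius $1/h^{-1}(2/t)$ (you reach this via the free parameter $s$ set to $4c_1$), obtain $h^{-1}(1/t)\le C\,h^{-1}(2/t)$ from the hypothesis, force $C\ge\sqrt2$, and iterate over dyadic scales to land in condition \Ab\ of Lemma~\ref{lem:equiv_scal_h}. The only differences are cosmetic: you keep the $1/d$ from $|B_\rho|=\omega_d\rho^d/d$ (the paper silently drops it, harmlessly), and you spell out the exponent bookkeeping $C\lambda^{\log_2 C}=(2\lambda)^{\log_2 C}$ that identifies $C_h=2$, $\lah=\ln 2/\ln C$.
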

\pf
By \eqref{ineq:comp_TJ}
\begin{align*}
\int_{\Rd} e^{-c_1t\, {\rm Re}[\LCh(z)]} dz
&\geq \int_{|z|< 1/h^{-1}(2/t)} e^{-c_12t \,h(1/|z|)}dz
\geq e^{-c_1 2t \,h(h^{-1}(2/t))} \omega_d \left[h^{-1}(2/t)\right]^{-d}\\
&= e^{-4c_1} \omega_d \left[h^{-1}(2/t)\right]^{-d}\,.
\end{align*}
Thus for $c_0=(c_2 e^{4c_1}/\omega_d)^{1/d}$ we have 
$h^{-1}(1/t) \leq c_0 h^{-1}(2/t)$, $t<T$.
Letting $c=\max\{c_0, \sqrt{2}\}$, $\sigma =\log_2 (c)$ and
considering  $2^{n-1} \leq \lambda<2^n$, $n\in\N$, we get for $t<T$,
$$
h^{-1}(1/t)\leq c \lambda^{\sigma} h^{-1}(\lambda/t)\,.
$$
The statement follows from Lemma~\ref{lem:equiv_scal_h}.
\qed

Note that in Lemma~\ref{lem:equiv_scal_h}
and~\ref{lem:2_impl_scal} we 
deal with the behaviour of
the function $h$
at the origin (or globally if $\theta_h=\infty$ therein).
Without proofs we give counterparts  
for the behaviour at infinity.

\begin{lemma}\label{lem:equiv_scal_h-2}
Let $\lah \in(0,2]$, $c_h\in(0,1]$ and $\theta_h\in[0,\infty)$. 
The following are equivalent.

\begin{enumerate}
\item[\Ba] For all $\lambda\geq  1$ and $r> \theta_h$,
\begin{equation*}%\label{eq:wlsc:h}
c_h\lambda^{\lah } h( \lambda r)\leq h(r)\,. %\qquad \lambda\leq 1,\, r< \theta_h.
\end{equation*}
\item[\Bb]  For all $\lambda\leq 1$ and $u<h(\theta_h)$,
\begin{equation*}%\label{eq:sc:hinv}
(c_h\lambda)^{1/\lah} h^{-1}(\lambda u)\leq  h^{-1}(u)\,. %,\qquad \lambda\geq 1,\, u> h(\theta_h)\,.
\end{equation*}
\end{enumerate}
\noindent
Further, consider
\begin{enumerate}
\item[\Bd] There is $\overline{c}\in [1,\infty)$ such that for all $\lambda \leq 1$ and $r<1/\theta_h$,
\begin{equation*}%\label{eq:low_scal}
\LCh^*(\lambda r) \leq \overline{c} \lambda^{\lah} \LCh^*(r)\,. 
\end{equation*}
\item[\Bc] There is $c>0$ and $\theta\in [0,\infty)$ such that for all $r>\theta$,
\begin{equation*}%\label{eq:Kapproxh}
h(r)\leq c K(r)\,. %\qquad r<  \theta_h\,, \mbox{ for some } c\geq 1\,.
\end{equation*}
\item[\Be]
There are $c>0$ and $\theta\in [0,\infty)$ such that for all $\lambda\geq 1$ and $r> \theta$,
\begin{equation*}
c \lambda^{\lah} K(\lambda r)\leq K(r)\,. 
\end{equation*}

\end{enumerate}
Then,   
$\Ba$
gives $\Bd$
with $\overline{c}= c_d/ c_h$,   $c_d=16(1+2d)$, while
$\Bd$
%\eqref{eq:low_scal}
gives 
$\Ba$
%\eqref{eq:wlsc:h}
with $c_h=1/(c_d \overline{c})$.\\
$\Ba$ 
 implies $\Bc$  
with $c=c(\lah,c_h)$ and $\theta=(c_h/2)^{-1/\lah}\theta_{h}$.
$\Bc$
implies 
$\Ba$
with $\lah=2/c$, $c_h=1$ and $\theta_h=\theta$.
$\Ba$ gives $\Be$ with $c=c(\lah,c_h)$ and $\theta=(c_h/2)^{-1/\lah}\theta_{h}$.
$\Be$ implies $\Ba$ with $c_h=c$ and $\theta_h=\theta$.
\end{lemma}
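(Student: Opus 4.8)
The plan is to prove Lemma~\ref{lem:equiv_scal_h-2} by mimicking, step by step, the arguments of Lemma~\ref{lem:equiv_scal_h}, but with all scaling directions and the roles of zero and infinity reversed. Throughout, the key structural input is Lemma~\ref{rem:int_K_is_h}, which writes $h(b)-h(a)=-\int_a^b 2K(r)r^{-1}\,dr$, together with the comparison \eqref{ineq:comp_TJ} between $h(1/r)$ and $\LCh^*(r)$.

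\textbf{Equivalence of $\Ba$ and $\Bb$.} First I would set $u=h(r)$, so that $r>\theta_h$ corresponds to $u<h(\theta_h)$ (using that $h$ is continuous and strictly decreasing, Lemma~\ref{lem:basic_prop_K_h}). Given $\lambda\leq 1$, if $\lambda<c_h$ one puts $s=(c_h\lambda^{\lah})^{-1}\cdot c_h/c_h$—more precisely $s=c_h\lambda^{\lah}\leq c_h<1$—and applies $\Bb$ in the form $h^{-1}(su)\leq\ldots$; the bookkeeping is the exact mirror of the $\Aa\Leftrightarrow\Ab$ computation: for $\lambda$ in the "small'' regime one uses $\Ba$ with a rescaled argument, and for $\lambda$ in the complementary regime (here $\lambda\in[c_h^{1/\lah},1]$) one simply invokes monotonicity of $h$, since then $(c_h\lambda^{\lah})^{-1}\geq 1$. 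The converse is identical with the inequalities reversed, just as the paper says "proved in the same manner.''

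\textbf{Equivalence with $\Bd$, and with $\Bc$.} The $\Ba\Leftrightarrow\Bd$ step is immediate from \eqref{ineq:comp_TJ}: substituting $r\mapsto 1/r$ turns a lower scaling of $h$ at infinity into an upper scaling of $\LCh^*$ at zero, and the constant $c_d=16(1+2d)$ is exactly the product of the two constants in \eqref{ineq:comp_TJ}. For $\Ba\Rightarrow\Bc$ I would copy the integral estimate from the proof of $\Aa\Rightarrow\Ac$: from $\Ba$ one gets $h(s)\leq\frac12 h(\lambda_0^{-1}s)$ for $s$ large and a suitable $\lambda_0<1$ (now one contracts from $s$ towards $\lambda_0^{-1}s>s$ rather than the other way), then Lemma~\ref{rem:int_K_is_h} over $[\lambda_0^{-1}s,s]$—wait, over $[s,\lambda_0^{-1}s]$—gives $K$ on that interval bounded below by a multiple of $h(\lambda_0^{-1}s)-h(s)\geq\frac12 h(\lambda_0^{-1}s)\geq\frac12 h(s)$; tracking when "$s$ large'' kicks in yields the threshold $\theta=(c_h/2)^{-1/\lah}\theta_h$. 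Conversely, $\Bc$ plugged into Lemma~\ref{rem:int_K_is_h} gives $h(r_2)-h(r_1)\leq-(2/c)\int_{r_1}^{r_2}h(s)s^{-1}\,ds$ for $\theta<r_1<r_2$, hence $h(r)r^{2/c}$ is non-increasing for $r>\theta$, which is precisely $\Ba$ with $\lah=2/c$, $c_h=1$, $\theta_h=\theta$.

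\textbf{Equivalence with $\Be$.} From $\Ba$ I would derive $\Be$ via $\Bc$ exactly as $\Ae$ was obtained from $\Aa$ via $\Ac$. For $\Be\Rightarrow\Ba$: write, for $r$ large, $h(r)=h(r)-h(\infty)=\int_r^\infty K(s)s^{-1}\,ds$ (legitimate since $h(\infty)=0$ by Lemma~\ref{lem:basic_prop_K_h}(1) and by Lemma~\ref{rem:int_K_is_h}), then for $\lambda\geq 1$ bound $\int_r^\infty K(s)s^{-1}\,ds\leq c^{-1}\lambda^{-\lah}\int_r^\infty K(\lambda s)s^{-1}\,ds=c^{-1}\lambda^{-\lah}\int_{\lambda r}^\infty K(u)u^{-1}\,du=c^{-1}\lambda^{-\lah}h(\lambda r)$, i.e.\ $c\lambda^{\lah}h(\lambda r)\leq h(r)$, giving $\Ba$ with $c_h=c$, $\theta_h=\theta$. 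The only place needing a little care—and the main (mild) obstacle—is matching the constants and the thresholds in the $\Ba\Leftrightarrow\Bc$ and $\Ba\to\Be$ steps: one must check the "tail at infinity'' integrals converge (guaranteed by $h(\infty)=0$) and that the domains where each scaling is asserted line up, e.g.\ that $\Be$ on $r>\theta$ indeed transfers to $\Ba$ on the same $r>\theta$ without shrinking the range, which it does because the change of variables $u=\lambda s$ only enlarges the region of integration when $\lambda\geq1$. No genuinely new idea beyond Lemma~\ref{rem:int_K_is_h} and \eqref{ineq:comp_TJ} is required; the proof is a mirror image of Lemma~\ref{lem:equiv_scal_h}, which is why the authors omit it.
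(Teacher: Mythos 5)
Your overall strategy is exactly what the authors intend: Lemma~\ref{lem:equiv_scal_h-2} is stated without proof as a ``counterpart for the behaviour at infinity'' of Lemma~\ref{lem:equiv_scal_h}, and the mirror-image argument based on Lemma~\ref{rem:int_K_is_h}, the inequality \eqref{ineq:comp_TJ}, and the fact $h(\infty)=0$ is the right way to carry it out. The identification of $h(\infty)=0$ as the structural reason why $h(r)=\int_r^\infty 2K(s)s^{-1}\,ds$ is available, and the observation that $\Be\Rightarrow\Ba$ goes through a change of variables that only enlarges the domain of integration, are both well placed.

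However, several of your intermediate inequalities point the wrong way and need to be corrected before the proof stands. In the step $\Ba\Rightarrow\Bc$ you write ``$h(s)\leq\tfrac12 h(\lambda_0^{-1}s)$ for $s$ large and a suitable $\lambda_0<1$,'' but since $\lambda_0^{-1}s>s$ and $h$ is decreasing this is impossible; what $\Ba$ actually gives, with $\lambda_0=(c_h/2)^{1/\lah}$, is $h(\lambda_0^{-1}s)\leq\tfrac12 h(s)$ for $s>\theta_h$. Correspondingly the chain ``$h(\lambda_0^{-1}s)-h(s)\geq\tfrac12 h(\lambda_0^{-1}s)\geq\tfrac12 h(s)$'' is reversed throughout (the left side is negative, and the last inequality contradicts monotonicity); the correct estimate is $h(s)-h(\lambda_0^{-1}s)\geq\tfrac12 h(s)\geq\tfrac12 h(\lambda_0^{-1}s)$, combined with
\begin{equation*}
h(s)-h(\lambda_0^{-1}s)=\int_s^{\lambda_0^{-1}s}2\,[r^2K(r)]\,r^{-3}\,dr\leq K(\lambda_0^{-1}s)\,(\lambda_0^{-2}-1)\,,
\end{equation*}
where crucially $K$ is evaluated at the \emph{right} endpoint $\lambda_0^{-1}s$; this is what produces the threshold $\theta=\lambda_0^{-1}\theta_h=(c_h/2)^{-1/\lah}\theta_h$ in $\Bc$. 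Similarly, in the $\Be\Rightarrow\Ba$ step you write $\int_r^\infty K(s)s^{-1}\,ds\leq c^{-1}\lambda^{-\lah}\int_r^\infty K(\lambda s)s^{-1}\,ds$, but $\Be$ reads $K(s)\geq c\lambda^{\lah}K(\lambda s)$, so the inequality should be $\int_r^\infty K(s)s^{-1}\,ds\geq c\lambda^{\lah}\int_r^\infty K(\lambda s)s^{-1}\,ds=c\lambda^{\lah}h(\lambda r)$; note that your stated conclusion $c\lambda^{\lah}h(\lambda r)\leq h(r)$ is the right one, but it does not follow from the intermediate line as you wrote it. Once these directions are fixed the argument is complete and matches the intended mirror of Lemma~\ref{lem:equiv_scal_h}.
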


\begin{lemma}\label{lem:2_impl_scal-2}
Assume that for some $T,c_1,c_2>0$ we have
$$
\int_{\Rd} e^{-c_1 t\, {\rm Re}[\LCh(z)]} dz \leq c_2 \left[h^{-1}(1/t)\right]^{-d}\,,\qquad t>T\,.
$$
Then 
$\Ba$
  holds for some $\lah \in (0,2]$, $c_h\in (0,1]$ and $\theta_h=h^{-1}(2/T)$.
Moreover, $\lah$ and $c_h$ can be chosen to depend only on $d$, $c_1$ and $c_2$.
\end{lemma}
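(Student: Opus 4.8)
The plan is to run the proof of Lemma~\ref{lem:2_impl_scal} ``at infinity'', the only changes being in the bookkeeping of the time window and an appeal to Lemma~\ref{lem:equiv_scal_h-2} in place of Lemma~\ref{lem:equiv_scal_h}. First I would bound the integral from below exactly as in Lemma~\ref{lem:2_impl_scal}. For \emph{every} $t>0$ (the restriction $t>T$ will enter only through the hypothesis), using ${\rm Re}[\LCh(z)]\le\LCh^*(|z|)\le 2h(1/|z|)$ from \eqref{ineq:comp_TJ} and the monotonicity of $h$, and restricting the integral to the ball $\{|z|<1/h^{-1}(2/t)\}$ on which $h(1/|z|)\le h(h^{-1}(2/t))=2/t$, one gets
$$
\int_{\Rd} e^{-c_1 t\,{\rm Re}[\LCh(z)]}\,dz \;\ge\; e^{-2c_1 t\,h(h^{-1}(2/t))}\,\omega_d\,\big[h^{-1}(2/t)\big]^{-d} \;=\; e^{-4c_1}\,\omega_d\,\big[h^{-1}(2/t)\big]^{-d}.
$$
Combining this with the hypothesis, which is available precisely for $t>T$, yields $h^{-1}(1/t)\le c_0\,h^{-1}(2/t)$ with $c_0=(c_2 e^{4c_1}/\omega_d)^{1/d}$ for all $t>T$; writing $u=1/t$ this reads $h^{-1}(u/2)\le c_0\,h^{-1}(u)$ for all $0<u<2/T$.

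Next I would iterate this single dyadic step into a weak lower scaling of $h^{-1}$ near the origin. Put $c=\max\{c_0,\sqrt2\}$ (by Lemma~\ref{lem:basic_prop_K_h}(5) one in fact has $c_0\ge\sqrt2$, so that the resulting exponent will not exceed $2$) and $\sigma=\log_2 c\ge 1/2$. An induction on $n$ gives $h^{-1}(u/2^n)\le c^n\,h^{-1}(u)$ for $0<u<2/T$ and $n\in\N\cup\{0\}$. For arbitrary $\lambda\in(0,1]$, choosing $n\ge 0$ with $2^{-n}\le\lambda<2^{-n+1}$ one has $n\le\log_2(2/\lambda)$, so by monotonicity of $h^{-1}$ and $c\ge 1$,
$$
h^{-1}(\lambda u)\;\le\;h^{-1}(u/2^n)\;\le\;c^n\,h^{-1}(u)\;\le\;c^{\log_2(2/\lambda)}\,h^{-1}(u)\;=\;c\,\lambda^{-\sigma}\,h^{-1}(u),\qquad 0<u<2/T.
$$
Rearranged this is $(c^{-1/\sigma}\lambda)^{\sigma}\,h^{-1}(\lambda u)\le h^{-1}(u)$, i.e.\ exactly $\Bb$ with $\lah=1/\sigma$, $c_h=c^{-1/\sigma}$ and $\theta_h=h^{-1}(2/T)$ (so that $h(\theta_h)=2/T$). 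One then checks that $\lah=1/\log_2 c\in(0,2]$ because $c\ge\sqrt2$, that $c_h=c^{-1/\log_2 c}=1/2\in(0,1]$, and that both depend only on $d,c_1,c_2$ through $c$.

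Finally I would invoke the equivalence $\Bb\Leftrightarrow\Ba$ of Lemma~\ref{lem:equiv_scal_h-2} to obtain $\Ba$ with the same $\lah$, $c_h$ and $\theta_h$, which also gives the ``moreover'' part. I do not anticipate any genuine obstacle: this is the mirror image of Lemma~\ref{lem:2_impl_scal}, and the only points requiring care are keeping the three ranges $t>T$, $0<u<1/T$, $0<u<2/T$ consistent throughout the iteration, and making sure the exponent delivered by the dyadic argument lies in $(0,2]$ --- which is precisely why $c_0$ is replaced by $c=\max\{c_0,\sqrt2\}$.
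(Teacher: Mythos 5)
Your proof is correct and is precisely the ``mirror image'' argument the paper intends (it explicitly states that Lemma~\ref{lem:2_impl_scal-2} is left without proof as the large-time counterpart of Lemma~\ref{lem:2_impl_scal}). You correctly reproduce the lower bound of the integral, track that the hypothesis now gives the single dyadic step $h^{-1}(u/2)\le c_0\,h^{-1}(u)$ on $0<u<2/T$ rather than on $u>1/T$, iterate it downward in $u$, land in $\Bb$ with $\theta_h=h^{-1}(2/T)$, and close via Lemma~\ref{lem:equiv_scal_h-2}; the observation that Lemma~\ref{lem:basic_prop_K_h}(5) forces $c_0\ge\sqrt2$ (so that $\lah\le 2$) is exactly the role of the safeguard $c=\max\{c_0,\sqrt2\}$ in the proof of Lemma~\ref{lem:2_impl_scal}.
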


Here are a few more general formulae that relate
other objects to $\int_{|z|\geq r}\LM(dz)=\LM(B_r^c)$.
\begin{lemma}
Let $f\colon [0,\infty)\to [0,\infty)$ be differentiable, $f(0)=0$, $f'\geq 0$ and $f'\in L^1_{loc}([0,\infty))$.
For all $r>0$,
\begin{align}
\int_{|z|<r} f(|z|)\,\LM(dz)
&=\int_0^r f'(s)
\LM(B_s^c)\,ds
-f(r)
N(B_r^c)
,\label{eq:tail_1}\\
\int_{|z|\geq r}f(|z|)\, \LM(dz)
&=\int_0^{\infty} f'(s) 
\LM(B_{r\vee s}^c)\,ds\,. \label{eq:tail_2}
\end{align}
\end{lemma}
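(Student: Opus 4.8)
The plan is to reduce both identities to Tonelli's theorem after writing $f$ as the integral of its derivative. The first step is to record that, since $f$ is differentiable on $[0,\infty)$ with $f(0)=0$ and $f'\geq 0$ locally integrable, $f$ is absolutely continuous and $f(\rho)=\int_0^{\rho}f'(s)\,ds=\int_0^{\infty}f'(s)\ind_{\{s<\rho\}}\,ds$ for every $\rho\geq 0$. (An everywhere differentiable function with Lebesgue-integrable derivative is absolutely continuous; alternatively one may simply take this integral representation as the hypothesis on $f$.)

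Next I would substitute $\rho=|z|$ and integrate against $\LM$. For \eqref{eq:tail_1} the integrand $f'(s)\ind_{\{s<|z|\}}\ind_{\{|z|<r\}}$ is non-negative and jointly measurable in $(s,z)$, so Tonelli gives
$$\int_{|z|<r}f(|z|)\,\LM(dz)=\int_0^{\infty}f'(s)\,\LM(\{z\colon s<|z|<r\})\,ds=\int_0^{r}f'(s)\,\LM(\{z\colon s<|z|<r\})\,ds,$$
the set being empty for $s\geq r$. For Lebesgue-a.e.\ $s$ the sphere $\{|z|=s\}$ carries no $\LM$-mass — only countably many radii can, since $\LM$ is finite outside every ball $B_\varepsilon$ — hence $\LM(\{s<|z|<r\})=\LM(B_s^c)-\LM(B_r^c)$ for a.e.\ $s\in(0,r)$; as $\LM(B_r^c)\leq h(r)<\infty$ by Lemma~\ref{lem:basic_prop_K_h}, I may split the integral and use $\int_0^r f'=f(r)$ to obtain \eqref{eq:tail_1}. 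For \eqref{eq:tail_2} the same application of Tonelli yields
$$\int_{|z|\geq r}f(|z|)\,\LM(dz)=\int_0^{\infty}f'(s)\,\LM(\{z\colon |z|\geq r,\ |z|>s\})\,ds,$$
and the set in question equals $B_r^c$ when $s<r$ and $\{|z|>s\}$ when $s>r$; up to the $\LM$-null sphere (for a.e.\ $s$) this is precisely $B_{r\vee s}^c$, which is \eqref{eq:tail_2}.

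I do not expect a genuine obstacle here: the argument is Tonelli plus bookkeeping of the radial level sets. The only points needing a word of care are the integral representation $f(\rho)=\int_0^{\rho}f'$ (i.e.\ the absolute continuity of $f$) and the observation that all but countably many radii give an $\LM$-null sphere, which is what makes passing freely between $\{|z|\geq s\}$, $\{|z|>s\}$ and $B_s^c$ inside the $ds$-integral harmless. One should also keep in mind that the identities are meant in $[0,\infty]$: both sides may well be infinite together (take $f(s)=s$ when $\int_{|z|<r}|z|\,\LM(dz)=\infty$), which is consistent since the only subtracted term, $f(r)\LM(B_r^c)$, is always finite.
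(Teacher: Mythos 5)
Your proof is correct and follows essentially the same route as the paper: write $f(\rho)=\int_0^\infty f'(s)\ind_{\{s<\rho\}}\,ds$ and apply Tonelli. The only difference is cosmetic but worth noting: you use the strict inequality $s<\rho$, which forces the digression about $\LM$-null spheres to pass between $\{|z|>s\}$ and $B_s^c=\{|z|\geq s\}$ for a.e.\ $s$. The paper instead writes $f(\rho)=\int_0^\infty f'(s)\ind_{\{s\leq\rho\}}\,ds$, so that $\int_{\Rd}\ind_{\{s\leq|z|\}}\LM(dz)=\LM(B_s^c)$ holds for every $s$ exactly and the level-set bookkeeping becomes identity-for-identity rather than up to null sets, eliminating the sphere argument entirely.
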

\pf We have
\eqref{eq:tail_1} by
\begin{align*}
\int_{|z|<r} f(|z|)\,\LM(dz)
&=\int_{\Rd} \ind_{|z|<r} \left( \int_0^{\infty}\ind_{s\leq |z|} f'(s)\,ds\right)\LM(dz)\\
&=\int_0^r f'(s) \left( \int_{\Rd} \ind_{s\leq |z|<r} \,\LM(dz)  \right)ds\\
&= \int_0^r f'(s) \left( \int_{\Rd} \ind_{s \leq |z|} \,\LM(dz)  \right)ds- \int_0^r f'(s) \left(\int_{|z|\geq r}\LM(dz)\right)ds\,.
\end{align*}
The equality \eqref{eq:tail_2}
follows from
\begin{align*}
\int_{|z|\geq r}f(|z|)\, \LM(dz)
=\int_{\Rd} \ind_{r \leq |z|} \left(\int_0^{\infty} \ind_{s\leq |z|} f'(s)\,ds\right)\LM(dz)\,.
\end{align*}
\qed

Putting $f(s)=s^2$ in \eqref{eq:tail_1}
gives the following formula.
\begin{corollary}\label{cor:h_rep}
For all $r>0$,
$$
h(r)=r^{-2} \|A\|+ r^{-2} \int_0^r 2s
% \left(\int_{|z|\geq s} \LM(dz)\right)
\,\LM(B_s^c)
\,ds\,.
$$
\end{corollary}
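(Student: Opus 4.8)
The plan is to read the identity straight off \eqref{eq:tail_1}. First I would check that $f(s)=s^2$ meets the hypotheses of the lemma preceding the corollary: it is differentiable on $[0,\infty)$, $f(0)=0$, $f'(s)=2s\geq 0$, and $f'\in L^1_{loc}([0,\infty))$, all of which are immediate. Then \eqref{eq:tail_1} with this choice of $f$ gives
$$
\int_{|z|<r}|z|^2\,\LM(dz)=\int_0^r 2s\,\LM(B_s^c)\,ds-r^2\,\LM(B_r^c)\,,\qquad r>0\,.
$$

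Next I would expand the non-local part of $h(r)$ by splitting the truncation $1\wedge(|x|^2/r^2)$ according to whether $|x|<r$ or $|x|\geq r$, which yields $r^{-2}\int_{|x|<r}|x|^2\,\LM(dx)+\LM(B_r^c)$. Substituting the displayed identity, the term $-r^{-2}\cdot r^2\LM(B_r^c)=-\LM(B_r^c)$ cancels the tail term $\LM(B_r^c)$ exactly, so the whole non-local part collapses to $r^{-2}\int_0^r 2s\,\LM(B_s^c)\,ds$. Adding the Gaussian contribution $r^{-2}\|A\|$ from the definition of $h$ then gives the asserted formula.

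I do not expect any genuine obstacle here: the corollary is a one-line consequence of \eqref{eq:tail_1}, and the only point worth a second glance — that the boundary term $-f(r)\LM(B_r^c)$ produced by \eqref{eq:tail_1} is precisely compensated by the mass the truncation $1\wedge(|x|^2/r^2)$ assigns to $\{|x|\geq r\}$ — is transparent from the two displays above. If one preferred not to invoke \eqref{eq:tail_1}, the same identity could be obtained directly by Fubini's theorem starting from $|z|^2=\int_0^{|z|}2s\,ds$, in the same spirit as the proof of Lemma~\ref{rem:int_K_is_h}.
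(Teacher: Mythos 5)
Your proposal is correct and follows precisely the paper's own route: apply \eqref{eq:tail_1} with $f(s)=s^2$, split the truncation $1\wedge(|x|^2/r^2)$ in the definition of $h$, and observe that the boundary term $-\LM(B_r^c)$ cancels the tail mass. The paper compresses all of this into one sentence, so your write-up simply supplies the details it leaves implicit.
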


\begin{lemma}\label{lem:int_infty}
Let $\Aa$ hold with $\lah\geq 1$. If $A=0$, then $\int_{|z|<1}|z|\LM(dz)=\infty$.
\end{lemma}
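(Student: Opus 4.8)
The plan is to argue by contradiction: assume $A=0$ and $\int_{|z|<1}|z|\,\LM(dz)<\infty$, and derive a contradiction with $\Aa$, using the hypothesis $\lah\ge1$. The guiding idea is that $\Aa$ forces $r\,h(r)$ to stay bounded away from zero for small $r$ (since $\lah\ge1$ it cannot decay like a positive power), whereas the assumed finiteness of $\int_{|z|<1}|z|\,\LM(dz)$, together with $A=0$, forces $r\,h(r)\to0$ as $r\to0^+$.

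First I would extract the lower estimate from $\Aa$. Fix any $r_0\in(0,\theta_h)$ with $r_0\le1$; since $h$ is strictly decreasing with $h(0^+)=\infty$ we have $h(r_0)\in(0,\infty)$. For $0<s<r_0$, applying $\Aa$ with $r=r_0$ and $\lambda=s/r_0\le1$ gives $h(r_0)\le C_h(s/r_0)^{\lah}h(s)$, hence
$$
s\,h(s)\ \geq\ \frac{1}{C_h}\left(\frac{r_0}{s}\right)^{\lah-1} r_0\,h(r_0)\ \geq\ \frac{r_0\,h(r_0)}{C_h}\ >\ 0 ,
$$
the last inequality using $r_0/s\ge1$ and $\lah\ge1$. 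So $\liminf_{s\to0^+} s\,h(s)>0$.

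For the matching upper estimate I would invoke the implication $\Aa\Rightarrow\Ac$ already established in Lemma~\ref{lem:equiv_scal_h}: there is $c>0$ with $h(r)\le cK(r)$ for $r<\theta_h$. Since $A=0$, for $r<\theta_h\wedge1$ we get, using $|x|^2\le r|x|$ on $\{|x|<r\}$,
$$
r\,h(r)\ \leq\ c\,rK(r)\ =\ c\,r^{-1}\!\int_{|x|<r}|x|^2\,\LM(dx)\ \leq\ c\!\int_{|x|<r}|x|\,\LM(dx) .
$$
By dominated convergence (dominating function $|x|\,\ind_{\{0<|x|<1\}}\in L^1(\LM)$, and $\LM(\{0\})=0$) the right-hand side tends to $0$ as $r\to0^+$, so $r\,h(r)\to0$. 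This contradicts the previous paragraph, and the lemma follows.

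The only place where both hypotheses are genuinely used is the last display: $A=0$ is what makes $rK(r)=r^{-1}\int_{|x|<r}|x|^2\LM(dx)$ with no surviving $r^{-1}\|A\|$ term, and $\int_{|z|<1}|z|\LM(dz)<\infty$ is precisely what forces $\int_{|x|<r}|x|\LM(dx)\to0$. I do not anticipate any real obstacle; everything else is routine manipulation of $\Aa$ together with its equivalent form $\Ac$.
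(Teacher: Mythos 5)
Your proof is correct, and it shares the essential structure of the paper's argument: under $\Aa$ with $\lah\geq 1$ the quantity $r\,h(r)$ stays bounded away from zero as $r\to 0^+$, whereas finiteness of $\int_{|z|<1}|z|\,\LM(dz)$ (together with $A=0$) would force $r\,h(r)\to 0$. Where you differ is in the tool used to produce the decaying upper bound. You invoke the implication $\Aa\Rightarrow\Ac$ from Lemma~\ref{lem:equiv_scal_h} to get $h(r)\leq c\,K(r)$, and then bound $r\,K(r)\leq \int_{|x|<r}|x|\,\LM(dx)$. The paper instead applies the unconditional Fubini identity of Corollary~\ref{cor:h_rep} (with $A=0$) to get $r\,h(r)\leq 2\int_0^r\LM(B_s^c)\,ds$ directly, and uses \eqref{eq:tail_1} with $f(s)=s$ to identify $\int_0^1\LM(B_s^c)\,ds$ with $\int_{|z|<1}|z|\,\LM(dz)+\LM(B_1^c)$, from which the same absolute-continuity argument gives the vanishing. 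The paper's route is slightly more elementary in that it does not lean on the scaling hypothesis at all for the upper bound (only the lower bound requires $\Aa$); your route is equally valid but routes through the nontrivial equivalence $\Aa\Leftrightarrow\Ac$ that the paper happens to have already proved. A small benefit of your version is that passing through $K$ drops the $\LM(B_r^c)$ tail term automatically, whereas the paper's Corollary~\ref{cor:h_rep} handles that term by absorbing it into the integral $\int_0^r\LM(B_s^c)\,ds$.
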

\pf
By \eqref{eq:tail_1} with $f(s)=s$ we have
$
\int_{|z|<1}|z|\LM(dz)=\int_0^1 \LM(B_s^c)\,ds-\LM(B_1^c)
$.
By Corollary~\ref{cor:h_rep} we get
$
rh(r)\leq 2 \int_0^r \LM(B_s^c)\,ds
$. By our assumption the left hand side of the latter is bounded from below by a positive constant,
so $\int_0^r \LM(B_s^c)\,ds=\infty$
and the proof is complete.
\qed

\begin{lemma}\label{lem:lah_g1_1}
Let $\Aa$ hold with $\lah>1$. Then
$$\int_{r\leq |z|< \theta_h}|z| \LM(dz)\leq  \frac{2 C_h}{\lah-1}\,  r h(r)\,,\qquad r>0\,.$$
\end{lemma}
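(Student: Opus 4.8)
The plan is to express the truncated first moment $\int_{r\leq |z|<\theta_h}|z|\,\LM(dz)$ as an iterated integral whose inner integrand is the tail $\LM(B_s^c)$, then bound that tail by $h(s)$ using Lemma~\ref{lem:basic_prop_K_h}(6), and finally use the scaling assumption $\Aa$ to replace $\int_r^{\theta_h}h(s)\,ds$ by a multiple of $rh(r)$. The hypothesis $\lah>1$ is used precisely to make $\int_r^{\infty}s^{-\lah}\,ds$ finite.

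First I would dispose of the case $r\geq \theta_h$, where $\{r\leq |z|<\theta_h\}=\emptyset$ and there is nothing to prove. For $0<r<\theta_h$, writing $|z|=r+\int_r^{|z|}ds$ for $|z|\geq r$ and applying Fubini's theorem exactly as in the proof of \eqref{eq:tail_1} (so that no subtraction of possibly infinite quantities occurs), one gets
\begin{align*}
\int_{r\leq |z|<\theta_h}|z|\,\LM(dz)
&=r\,\LM\big(\{r\leq |z|<\theta_h\}\big)+\int_r^{\theta_h}\LM\big(\{s<|z|<\theta_h\}\big)\,ds\\
&\leq r\,\LM(B_r^c)+\int_r^{\theta_h}\LM(B_s^c)\,ds\,,
\end{align*}
and this estimate is valid also when $\theta_h=\infty$.

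Next I would estimate the two terms. By Lemma~\ref{lem:basic_prop_K_h}(6) the first one is at most $rh(r)$. For the second, the same lemma gives $\LM(B_s^c)\leq h(s)$, while $\Aa$ applied with $\lambda=r/s\leq 1$ (admissible since $s<\theta_h$) yields $h(s)\leq C_h (r/s)^{\lah}h(r)$; therefore
\[
\int_r^{\theta_h}\LM(B_s^c)\,ds\leq C_h\, r^{\lah}h(r)\int_r^{\infty}s^{-\lah}\,ds=\frac{C_h}{\lah-1}\,rh(r)\,.
\]
Combining the two and using that $C_h\geq 1\geq \lah-1$ (so that $rh(r)\leq \tfrac{C_h}{\lah-1}rh(r)$) gives the asserted bound with constant $\tfrac{2C_h}{\lah-1}$.

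I do not expect a real obstacle here; the only points that need some care are performing the Fubini step directly on the annulus $\{r\leq |z|<\theta_h\}$ rather than as a difference of two integrals over balls (which could be an indeterminate $\infty-\infty$ when the small jumps fail to have a finite first moment, as happens when $\lah\geq 1$), treating $\theta_h<\infty$ and $\theta_h=\infty$ simultaneously, and making sure $\Aa$ is invoked only for arguments below $\theta_h$.
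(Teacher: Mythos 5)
Your proof is correct and follows essentially the same approach as the paper: both express the truncated first moment via Fubini as $rh(r)+\int_r^{\theta_h}h(s)\,ds$, bound the tail of $\LM$ by $h$, and use $\Aa$ with $\lah>1$ to evaluate the resulting integral. The only cosmetic difference is that you re-derive the Fubini identity directly on the annulus (writing $|z|=r+\int_r^{|z|}ds$), whereas the paper cites \eqref{eq:tail_2} applied to the truncated Lévy measure $\ind_{|z|<\theta_h}\LM(dz)$.
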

\pf 
By \eqref{eq:tail_2} with $f(s)=s$ and
the L{\'e}vy measure $\ind_{|z|<\theta_h}\LM(dz)$,
\begin{align*}
\int_{r \leq |z|<\theta_h} |z|\,\LM(dz)
&=
\int_0^{\theta_h} \int_{|z|\geq r\vee s}\LM(dz)\,ds
\leq
\int_0^{\theta_h} h(r\vee s)\,ds\\
&\leq r h(r)+\int_r^{\theta_h} h(s)\,ds
\leq r h(r)+ \int_r^{\theta_h} C_h (r/s)^{\lah} h(r)\,ds\,.
\end{align*}
\qed

\begin{corollary}\label{cor:drf_r_drf}
Let $\Aa$ hold with $\lah>1$. Then there is
a constant $c=c(d,\lah,C_h)$ such that
for all $0<r<\theta_h$,
\begin{align*}
|\drf_r-\drf|\leq \frac{c}{\theta_h \land 1} \max\left\{r,r^2\right\}\, h(r)\,.
\end{align*}
\end{corollary}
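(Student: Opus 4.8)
The plan is to work directly from the definition~\eqref{def:br}, which gives $\drf_r-\drf=\int_{\Rd} z(\ind_{|z|<r}-\ind_{|z|<1})\,\LM(dz)$, and hence $|\drf_r-\drf|\le\int_{\Rd}|z|\,|\ind_{|z|<r}-\ind_{|z|<1}|\,\LM(dz)$. Since the indicator difference is $\pm$ the indicator of an annulus, the whole matter reduces to estimating $\int|z|\,\LM(dz)$ over $\{r\land1\le|z|<r\vee1\}$ by $\frac{c}{\theta_h\land1}\max\{r,r^2\}\,h(r)$, and I would split into the cases $r\ge1$ and $r<1$, subdividing the latter according to whether $\theta_h\ge1$ or $\theta_h<1$. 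The case $r\ge1$ is the easy one: it forces $\theta_h>1$, hence $\theta_h\land1=1$ and $\max\{r,r^2\}=r^2$, the annulus is $\{1\le|z|<r\}$ on which $|z|\le|z|^2$, so the integral is at most $\int_{|z|<r}|z|^2\,\LM(dz)\le r^2h(r)$ by Lemma~\ref{lem:basic_prop_K_h}(6), which is the claim with constant $1$.

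For $r<1$ the annulus is $\{r\le|z|<1\}$. If $\theta_h\ge1$ this is contained in $\{r\le|z|<\theta_h\}$, and Lemma~\ref{lem:lah_g1_1} (here $\lah>1$ is used) bounds the integral by $\tfrac{2C_h}{\lah-1}rh(r)$; since $\theta_h\land1=1$ and $\max\{r,r^2\}=r$, this is again the claim. If $\theta_h<1$ I would split $\{r\le|z|<1\}$ at $\theta_h$: on $\{r\le|z|<\theta_h\}$ Lemma~\ref{lem:lah_g1_1} gives $\tfrac{2C_h}{\lah-1}rh(r)$, while on $\{\theta_h\le|z|<1\}$ I use $|z|\le1$ and Lemma~\ref{lem:basic_prop_K_h}(6) to get $\int_{|z|\ge\theta_h}\LM(dz)\le h(\theta_h)$. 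The one point that genuinely needs an argument is the estimate $h(\theta_h)\le\tfrac{C_h}{\theta_h}rh(r)$: applying~$\Aa$ with $s\uparrow\theta_h$ and $\lambda=r/s<1$ and passing to the limit (using monotonicity of $h$, Lemma~\ref{lem:basic_prop_K_h}(2)) yields $h(\theta_h)\le C_h(r/\theta_h)^{\lah}h(r)$, after which $\lah>1$ together with $r/\theta_h<1$ lets me replace $(r/\theta_h)^{\lah}$ by $r/\theta_h$. Since $\theta_h<1$, the factor $1/\theta_h=1/(\theta_h\land1)$ dominates the two constants, so collecting the pieces finishes this case.

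Finally I would combine the three cases and take $c=C_h+\tfrac{2C_h}{\lah-1}\ge1$ (it depends only on $\lah,C_h$, a fortiori on $d,\lah,C_h$). The only delicate point — and the step I would flag as the main obstacle — is the regime $\theta_h<1$, where the extra annulus $\{\theta_h\le|z|<1\}$ sits outside the range in which the scaling~$\Aa$ is assumed; controlling it forces the one-off evaluation of the scaling inequality at the endpoint $\theta_h$, which is precisely why the factor $1/(\theta_h\land1)$ (rather than a bare constant) appears in the statement. Everything else is routine bookkeeping with the definitions of $h$ and $\drf_r$.
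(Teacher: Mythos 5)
Your proof is correct and follows essentially the same route as the paper's: the paper also reduces to $\int_{r\le|z|<1}|z|\,\LM(dz)$, splits it into the piece $\{r\le|z|<\theta_h\}$ handled by Lemma~\ref{lem:lah_g1_1} and a tail $\int_{|z|\ge\theta_h\land 1}\LM(dz)\le h(\theta_h\land 1)$ controlled via~$\Aa$ (using $\lah>1$ to pass from $(r/(\theta_h\land 1))^{\lah}$ to $r/(\theta_h\land 1)$), with the $r\ge1$ case dispatched by $|\drf_r-\drf|\le r^2h(r)$. Your explicit limiting argument at $s\uparrow\theta_h$ is a slightly more careful treatment of the endpoint than the paper makes explicit, but the decomposition, the key lemma, and the resulting constant are the same.
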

\begin{proof}
If $r\geq 1$, then $|\drf_r-\drf|\leq r^2h(r)$.
Let $r\leq 1$. We have 
\begin{align*}
|\drf_r-\drf| 
&\leq 
\int_{r\leq |z|<1} |z|\,\LM(dz)
\leq
\int_{r\leq |z|<\theta_h} |z|\, \LM(dz)
+ \int_{|z|\geq \theta_h \land 1} \LM(dz)\,.
\end{align*}
By $\Aa$ we get
\begin{align*}
\int_{|z|\geq \theta_h \land 1} \LM(dz)
\leq h(\theta_h \land 1) 
\leq C_h(r/(\theta_h\land 1))\, h(r)\,,
\end{align*}
which ends the proof by Lemma~\ref{lem:lah_g1_1}.
\end{proof}

We end this section with a technical comment on $\Aa$ and $\Ba$.
\begin{remark}\label{rem:rozciaganie}
If $\theta_{h}<\infty$ in $\Aa$,
we can stretch the range of scaling to  $r <R<\infty$
at the expense of the constant $C_{h}$.
Indeed, by continuity of $h$,  for $\theta_h\leq r< R$,
$$
h(r)\leq h(\theta_h)\leq C_h \lambda^{\lah} h(\lambda \theta_h)\leq C_h (r/\theta_h)^2 \lambda^{\lah} h(\lambda r)\leq C_h (R/\theta_h)^2 \lambda^{\lah}  h(\lambda r)\,.
$$
Similarly, if $\theta_h>0$ in $\Ba$, we extend the range to $0<R<r$
by reducing the constant $c_h$.
We have for $R<r\leq \theta_h$, 
$$
h(r)\geq h(\theta_h)\geq c_h \lambda^{\lah} h(\lambda \theta_h) 
\geq c_h (r/\theta_h)^2 \lambda^{\lah} h(\lambda r)
\geq c_h (R/\theta_h)^2 \lambda^{\lah} h(\lambda r) \,.
$$
\end{remark}

\section{General L{\'e}vy processes
}\label{sec:equivalence}

In this section we discuss a L{\'e}vy process $Y$ in $\Rd$
with a generating triplet $(A,\LM,\drf)$.

\subsection{Equivalent conditions - small  time}
We introduce and comment on eight conditions $\Ca-\Ch$, 
which are common in the literature. 
For $\Cb$ and $\Ce$ see \cite{MR2995789, MR3357585, MR3604626}, for $\Cc$ see \cite{MR3165234},
and for $\Cd$ see \cite{MR3139314, MR3235175}.

\begin{theorem}\label{thm:equiv}
Let $Y$ be a L{\'e}vy process.
The following are equivalent.

\begin{enumerate}
\item[\Ca] The density $p(t,x)$ of $Y_t$ exists and
there are $T_1\in(0,\infty]$, $c_1>0$ such that for all $t<T_1$,
$$
\sup_{x\in\Rd} p(t,x) \leq c_1 \left[h^{-1}(1/t)\right]^{-d}.
$$
\item[\Cb] There are $T_2\in (0,\infty]$, $c_2>0$ such that for all $t<T_2$,
$$
\int_{\Rd} e^{-t\, {\rm Re}[\LCh(z)]} dz \leq c_2 \left[h^{-1}(1/t)\right]^{-d}.
$$
\item[\Cc]  There are $T_3\in (0,\infty]$, $c_3\in (0,1]$ and $\alpha_3\in (0,2]$ such that for all $|x|>1/T_3$,

$$c_3\,  \LCh^*(|x|)\leq {\rm Re}[\LCh(x)]\qquad \mbox{and}
\qquad 
\LCh^*(\lambda r)\geq c_3 \lambda^{\alpha_3} \LCh^*(r)\,,\quad \lambda \geq 1,\,r>1/T_3\,.
%\LCh^* \in {\rm WLSC}(\alpha_3,1/T_3,c_3)\,.
$$ 
{ \quad}
\item[\Cd] There are $T_4\in(0,\infty]$, $c_4\in [1,\infty)$ such that for all $|x|>1/T_4$,
$$
\LCh^*(|x|)
\leq c_4\left( \left<x,Ax\right>+\int_{|\left<x,z\right>|<1} |\left<x,z\right>|^2\LM(dz)\right) \,.
$$
\end{enumerate}
Moreover, if $T_i=\infty$ for some $i=1,\ldots,4$, then $T_i=\infty$ for all $i=1,\ldots,4$.
\end{theorem}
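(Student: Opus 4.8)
The plan is to establish the cycle of implications $\Cb \Rightarrow \Ca$, $\Ca \Rightarrow \Cc$, $\Cc \Rightarrow \Cd$, $\Cd \Rightarrow \Cb$, together with a parallel bookkeeping of the time thresholds so that the final sentence about $T_i=\infty$ falls out automatically. For the threshold claim it suffices that at each implication the new threshold is obtained from the old one through $h^{-1}$ (or its inverse) evaluated at a fixed multiple of the old reciprocal threshold, since $h^{-1}(1/\infty)=h^{-1}(0^+)=\infty$ by our standing assumption $h(0^+)=\infty$; thus one $T_i=\infty$ forces the next, and going around the cycle forces all of them. I expect to set this up by proving each implication \emph{with an explicit relation} between thresholds, e.g. $T_2$ produced from $T_1$ via $T_2 = 1/h(h^{-1}(1/T_1))$-type formulas, and then remark that infinity is preserved.

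The implication $\Cb \Rightarrow \Ca$ is the analytic heart and uses the Fourier inversion already recalled in the introduction: if $e^{-t\LCh}$ is integrable then $p(t,x)=(2\pi)^{-d}\int e^{-i\langle x,z\rangle}e^{-t\LCh(z)}\,dz$ exists as a bounded continuous function with $\sup_x p(t,x)\leq (2\pi)^{-d}\int |e^{-t\LCh(z)}|\,dz = (2\pi)^{-d}\int e^{-t\,{\rm Re}[\LCh(z)]}\,dz$; integrability for $t<T_2$ is exactly what $\Cb$ asserts (the right side is finite since $h(0^+)=\infty$ forces $h^{-1}(1/t)>0$), and for smaller $t$ the semigroup property $p(t,\cdot)=p(t/2,\cdot)*p(t/2,\cdot)$ propagates existence, so one may take $T_1=T_2$ and $c_1=(2\pi)^{-d}c_2$. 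For $\Ca \Rightarrow \Cc$, I would first pass from $\Ca$ to $\Cb$ --- this is the standard argument bounding $e^{-t\,{\rm Re}[\LCh(z)]}$ through $p$, for instance using $\int e^{-2t\,{\rm Re}\LCh} = (2\pi)^d p(2t,0)\leq (2\pi)^d \sup_x p(2t,x)$ once one knows $e^{-t\,{\rm Re}\LCh}\in L^1$, with a short bootstrapping argument to get that integrability from the assumed density bound --- and then invoke Lemma~\ref{lem:2_impl_scal}, which hands us scaling $\Aa$ for $h$ with $\theta_h=h^{-1}(1/T_1)$; finally translate $\Aa$ into $\Ad$ via Lemma~\ref{lem:equiv_scal_h}, and translate the density bound into the pointwise comparison $c_3\LCh^*(|x|)\leq {\rm Re}[\LCh(x)]$ using the explicit Fourier representation together with \eqref{ineq:comp_TJ} (this last pointwise estimate is the more delicate ingredient and I treat it as the main obstacle; it is where one genuinely needs the density bound and not merely integrability of $e^{-t\LCh}$).

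The implication $\Cc \Rightarrow \Cd$ is elementary: the scaling of $\LCh^*$ in $\Cc$ gives, via the equivalence of $\Ad$ and $\Ac$ (hence $\Aa$) in Lemma~\ref{lem:equiv_scal_h} transported through \eqref{ineq:comp_TJ}, that $h\leq c K$ on a neighbourhood of $0$, and one then compares $K(1/|x|)$ written as $|x|^{-2}\|A\| + |x|^{-2}\int_{|z|<1/|x|}|z|^2\LM(dz)$ with the quantity in $\Cd$ --- noting $|\langle x,z\rangle|\leq |x||z|<1$ when $|z|<1/|x|$, so that $\int_{|z|<1/|x|}|x|^2|z|^2\LM(dz) \geq \int_{|z|<1/|x|}|\langle x,z\rangle|^2\LM(dz)$, and the Cauchy--Schwarz bound $|\langle x,z\rangle|\leq |x||z|$ in the other direction on the region $|\langle x,z\rangle|<1$ --- to match the two expressions up to constants, also absorbing the first inequality of $\Cc$ to replace ${\rm Re}[\LCh(x)]$ back by $\LCh^*$ as needed. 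For $\Cd \Rightarrow \Cb$ I would argue that the hypothesis forces $\LCh^*(|x|)\leq c_4\,{\rm Re}[\LCh(x)]$ directly (the bracketed quantity in $\Cd$ is dominated by ${\rm Re}[\LCh(x)] = \langle x,Ax\rangle + \int(1-\cos\langle x,z\rangle)\LM(dz)$ using $1-\cos t \geq ct^2$ for $|t|<1$ and $1-\cos t\geq 0$ always), whence $\int e^{-t\,{\rm Re}\LCh(z)}\,dz \leq \int e^{-(t/c_4)\LCh^*(|z|)}\,dz$, and then the scaling of $\LCh^*$ encoded in $\Cd$ (which one extracts because the bracketed quantity scales like $|x|^{2}$ against a monotone defect, or more cleanly by noting $\Cd$ together with the trivial bound $\LCh^*(|x|)\geq {\rm Re}[\LCh(x)]$-type inequalities forces $\Aa$ for $h$) lets us run the polar-coordinate integral $\int_{\Rd} e^{-ct\,\LCh^*(|z|)}\,dz \approx \omega_d\int_0^\infty e^{-ct\,\LCh^*(\rho)}\rho^{d-1}\,d\rho$ and bound it by $[h^{-1}(1/t)]^{-d}$ in the now-routine way (split at $\rho = 1/h^{-1}(c'/t)$, use the weak scaling to make the tail converge). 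Throughout, the only genuinely technical point is the pointwise lower bound ${\rm Re}[\LCh(x)]\geq c\,\LCh^*(|x|)$ in the $\Ca\Rightarrow\Cc$ step; everything else is assembling Lemmas~\ref{lem:basic_prop_K_h}--\ref{lem:2_impl_scal} and \eqref{ineq:comp_TJ}.
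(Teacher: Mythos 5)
Your cycle $\Cb\Rightarrow\Ca\Rightarrow\Cc\Rightarrow\Cd\Rightarrow\Cb$ differs from the paper's ($\Cb\Rightarrow\Ca$, $\Ca\Rightarrow\Cb$, $\Cb\Rightarrow\Cd$, $\Cd\Rightarrow\Cc$, $\Cc\Rightarrow\Cb$), and two of your steps contain genuine gaps.

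First, in $\Ca\Rightarrow\Cc$ you correctly reduce to $\Cb$ and correctly identify the pointwise inequality $c_3\,\LCh^*(|x|)\leq{\rm Re}[\LCh(x)]$ as ``the more delicate ingredient'' and ``the main obstacle'' --- but then you do not actually produce the argument. You say it follows from ``the explicit Fourier representation together with \eqref{ineq:comp_TJ},'' yet \eqref{ineq:comp_TJ} only compares $\LCh^*$ with $h$; it says nothing about the direction-dependent quantity ${\rm Re}[\LCh(x)]$, which can vanish along a subspace while $\LCh^*(|x|)$ stays large (think of a L\'evy measure supported on a hyperplane). The paper's proof of the corresponding implication ($\Cb\Rightarrow\Cd$) is the technical heart of the theorem: for $d\geq 2$ it fixes $v=x/|x|$, splits $\Rd$ into $V\oplus V^\perp$, uses the Berg--Forst subadditivity $\sqrt{{\rm Re}\LCh(\xi,\bar\xi)}\leq\sqrt{{\rm Re}\LCh_1(\xi,0)}+\sqrt{{\rm Re}\LCh_2(0,\bar\xi)}$ to factor the Gaussian-type integral as $\int e^{-t{\rm Re}\LCh}\geq\bigl(\int e^{-2t{\rm Re}\LCh_1}\bigr)\bigl(\int e^{-2t{\rm Re}\LCh_2}\bigr)$, then combines $h_2\leq h$ with $\Cb$ to conclude $h^{-1}\leq c_0\,h_1^{-1}$ and hence $h_1\approx h$ near zero, uniformly in $v$. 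That product-structure trick is the missing idea in your proposal; nothing you wrote supplies it.

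Second, your $\Cc\Rightarrow\Cd$ argument is incorrect as stated. The quantity in $\Cd$ is the slab integral $\int_{|\langle x,z\rangle|<1}|\langle x,z\rangle|^2\LM(dz)$, whereas $K(1/|x|)$ involves the ball integral $\int_{|z|<1/|x|}|z|^2\LM(dz)$. Your Cauchy--Schwarz manipulation $|\langle x,z\rangle|\leq|x||z|$ on the slab $|\langle x,z\rangle|<1$ would bound the $\Cd$ quantity above by $|x|^2\int_{|\langle x,z\rangle|<1}|z|^2\LM(dz)$, but that integral can be infinite (the slab is unbounded), so the comparison collapses. The correct route --- essentially the $\Cc\Rightarrow\Ch$ direction of Lemma~\ref{lem:C8} --- is to apply the \emph{first} inequality of $\Cc$ to the 1D projection $Z^1=\Pi_1Y$: writing $x=rv$ with $\LCh_1(z)=\LCh(\Pi_1 z)$, one gets $c_3\LCh^*(r)\leq{\rm Re}[\LCh(rv)]={\rm Re}[\LCh_1(rv)]\leq\LCh_1^*(r)$, hence $h_1\approx h$ near $0$, and then Lemma~\ref{lem:equiv_scal_h} for $h_1$ yields $h_1\leq cK_1$, which is precisely $\Cd$. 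Your $\Cd\Rightarrow\Cb$ step is essentially sound and amounts to composing the paper's $\Cd\Rightarrow\Cc$ (the $1-\cos$ bound) with $\Cc\Rightarrow\Cb$ (the polar-coordinate integral using \cite[Lemma~16]{MR3165234}), and your bookkeeping of the $T_i=\infty$ propagation is in the right spirit.
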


\pf
$\Cb \implies  \Ca$.
Follows immediately by the inverse Fourier transform.

\noindent
$\Ca \implies  \Cb$. Note that $p(t/2,\cdot)\in L^{1}(\Rd)\cap L^{\infty}(\Rd)\subset L^{2}(\Rd)$ for every $t>0$. Thus $e^{-(t/2)\LCh(\cdot)}\in L^{2}(\Rd)$ or equivalently
$|e^{-(t/2)\LCh(\cdot)}|^2
=e^{- t {\rm Re}[\LCh(\cdot)]}\in L^1(\Rd)$.
In particular, 
$p(t,\cdot)\in C_0(\Rd)$ holds by the Riemann--Lebesgue lemma.
Now, let $Z=Y^1-Y^2$, where $Y^1$ and $Y^2$ are two indepndent copies of $Y$.
Then $Z$ has  $2 {\rm Re}[\LCh(x)]$ as the characteristic exponent and a density $p_Z(t,\cdot)\in C_0(\Rd)$ such that for all $x\in\Rd$,
$$
p_Z(t,x)= \int_{\Rd} p(t,x-y)p(t,y)\,dy= (2\pi)^{-d} \int_{\Rd} e^{-i\left<x,z\right>} e^{-2 t \,{\rm Re}[\LCh(z)]}\, dz\,.
$$
Consequently, we get for $t<T_1$
$$
\int_{\Rd} e^{-(2 t)\, {\rm Re}[\LCh(z)]}\, dz\leq c_1 \left[h^{-1}(1/t)\right]^{-d}= c_1 \left[h^{-1}(2/(2t))\right]^{-d}\,,
$$
and the statement follows by Lemma~\ref{lem:2_impl_scal} and~\ref{lem:equiv_scal_h}  with 
$c_2=c_2(d,c_1)$ and $T_2=T_1/2$.

\noindent
$\Cb \implies  \Cd$.
The case of $d=1$ is simpler and
follows from Lemma~\ref{lem:2_impl_scal}, $\Ac$ and 
\eqref{ineq:comp_TJ}.
 We focus on $d\geq 2$.
For $x \neq 0$ let $v=x /|x|$ and $\Pi_1 z=\left<v,z\right>v$ be a projection on the linear subspace $V=\{\lambda v \colon \lambda \in \RR\}$ of $\Rd$.
We consider a projection $Z^1=\Pi_1 Y$ of the L{\'e}vy process $Y$ on $V$
and the corresponding objects $\LCh_1$, $K_1$ and~$h_1$.
By~\cite[Proposition~11.10]{MR1739520},
\begin{align*}
\LCh_1(z)=&\LCh(\Pi_1 z)\,,\quad z\in\Rd\,,\\
K_1(r)=r^{-2}\|\Pi_1 A \Pi_1\|&+r^{-2}\int_{|\Pi_1 z|< r}  |\Pi_1 z|^2 \LM(dz)\,,\\
h_1(r)=r^{-2}\|\Pi_1 A \Pi_1\|&+ \int_{\Rd}\left(1\wedge\frac{|\Pi_1 z|^2}{r^2}\right)\LM(dz)\,.
\end{align*}
Note that
$$
K_1(1/|x|)=\left<x,Ax\right>+\int_{|\left<x,z\right>|<1} |\left<x,z\right>|^2\LM(dz)\,.
$$
Therefore it suffices to show that for all $r<T_4$ (see \eqref{ineq:comp_TJ}),
\begin{align}\label{eq:crucial}
2 \,h(r) \leq c_4 K_1(r)\,,
\end{align}
with $c_4>0$ independent of the choice of $x$, or equivalently of the choice of the projection~$\Pi_1$.
Similarly, we define $Z^2=\Pi_2 Y$ and we get $\LCh_2$, $K_2$ and $h_2$ for a projection $\Pi_2$ on the linear subspace $V^{\perp}=\{y\in\Rd \colon  \left<y,v\right>=0\}$.
We let $\{v,v_2,\ldots,v_d\}$ to
be an orthonormal basis (with the usual scalar product) such that $v_2,\ldots,v_d\in V^{\perp}$.
Then
$x=
\xi v + \xi_2 v_2+ \ldots +\xi_d v_d$,
where $\xi\in\R$, $\bar{\xi}=(\xi_2,\ldots,\xi_d)\in \R^{d-1}$,
and we write $x=(\xi,\bar{\xi})$.
Since ${\rm Re}[\LCh(x)]$ is a characteristic exponent we have by \cite[Proposition~7.15]{MR0481057} 
that
\begin{align*}
\sqrt{{\rm Re}[\LCh(\xi,\bar{\xi})]} \leq 
\sqrt{{\rm Re}[\LCh(\xi,0)]}
+\sqrt{{\rm Re}[\LCh(0,\bar{\xi})]}
= \sqrt{{\rm Re}[\LCh_1(\xi,0)]}
+ \sqrt{{\rm Re}[\LCh_2(0,\bar{\xi})]}
\,.
\end{align*}
Thus ${\rm Re}[\LCh(\xi,\bar{\xi})]
\leq 
%2 {\rm Re}[\LCh(\xi,0)]
%+ 2{\rm Re}[\LCh(0,\bar{\xi})]=
2 {\rm Re}[\LCh_1(\xi,0)]
+ 2{\rm Re}[\LCh_2(0,\bar{\xi})]$.
In particuliar, see \eqref{eq:unbounded_LCh}, both $\LCh_1$ and $\LCh_2$ are unbounded,
so $Z^1$ and $Z^2$ are not compound Poisson processes (with drift), therefore
$h_1$ and $h_2$ are unbounded and strictly decreasing.
Further, by \eqref{ineq:comp_TJ}
%like in the first two lines of the proof of Lemma~\ref{lem:2_impl_scal},
for $t<T_2$,
\begin{align}
c_2 \left[h^{-1}(1/t)\right]^{-d}&\geq \int_{\Rd} e^{-t\, {\rm Re}[\LCh(z)]} dz\geq
\left(\int_{\R} e^{-2t\, {\rm Re}[\LCh_1( \xi,0)]}d\xi\right) \left( \int_{\R^{d-1}}e^{-2t\,{\rm Re}[\LCh_2(0,\bar{\xi})]}  d\bar{\xi}\right)  \label{eq:unbounded_LCh} \\
&\geq \left(\int_{|\xi|<1/h_1^{-1}(1/t)} e^{-4t\, h_1(1/|\xi|)} d\xi \right) \left(\int_{|\bar{\xi}|<1/h_2^{-1}(1/t)} e^{-4t\, h_2(1/|\bar{\xi}|) }d\bar{\xi}\right) \nonumber \\
&\geq e^{-8} \omega_{d-1} \left[h_1^{-1}(1/t) \right]^{-1} \left[h_2^{-1}(1/t)\right]^{-(d-1)} \,.\nonumber
\end{align}
Directly from the definition we have $h_2\leq h$, which 
implies $h^{-1}_2 \leq h^{-1}$ and with the above
gives 
$$
h^{-1}(u) \leq c_0 \, h_1^{-1}(u)\,,\qquad u>1/T_2\,,
$$
with $c_0=\max\{1,(c_2 e^8 /\omega_{d-1})\}$.
This implies by
monotonicity of $r^2 h_1(r)$ that
$$
h(r)\leq h_1(r/c_0)\leq c_0^2\, h_1(r)\,, \qquad r<h^{-1}(1/T_2)\,.
$$
By 
Lemma~\ref{lem:2_impl_scal} 
$h$  satisfies $\Aa$
with some $\lah=\lah(d,c_2)$, $C_h=C_h(d,c_2)$
and $\theta_h=h^{-1}(1/T_2)$.
Consequently, since $h_1$ and $h$ are comparable ($h_1\leq h$ always holds),
$h_1$ satisfies $\Aa$   
 with $\lah$, $c_0^2 C_h$ and $\theta_h$.
Lemma~\ref{lem:equiv_scal_h} for $h_1$ assures 
\eqref{eq:crucial} with
$c_4=c_4(d,c_2)$
and
$T_4=h^{-1}(1/T_2)$.

\noindent
$\Cd \implies  \Cc$.
Note that $1-\cos(r) \geq (1-\cos(1))r^2$ for $|r|<1$. 
Thus, together
with the assumption  we have 
for $|x|>1/T_4$,
\begin{align*}
{\rm Re}[\LCh(x)]
%= \left<x,Ax\right>+\int_{\Rd}\big( 1-\cos \left<x,z\right> \big)\LM(dz)
\geq  \left<x,Ax\right>
+(1 -\cos (1)) \int_{|\left<x,z\right>|<1} |\left<x,z\right>|^2 \LM(dz)
\geq \frac{1-\cos(1)}{c_4}\, \LCh^*(|x|)\,.
\end{align*}
It remains to show that $\LCh^*\in {\rm WLSC}$, or equivalently that
$\Aa$ holds
for $h$.
We take $v\in\Rd$ such that $|v|=1$
and we let $\Pi_1$ to be a projection on
the linear subspace $V=\{\lambda v \colon \lambda \in \RR\}$ of~$\Rd$.
We consider a projection $Z^1=\Pi_1 Y$ of the L{\'e}vy process $Y$ on $V$
and the corresponding objects $K_1$ and~$h_1$.
Note that for $r>0$,
$$
K_1(r)=\left<(v/r),A(v/r)\right>+\int_{|\left<(v/r),z\right>|<1} |\left<(v/r),z\right>|^2 \LM(dz)\,.
$$
and therefore by \eqref{ineq:comp_TJ} and our assumption for $r<T_4$,
\begin{align*}%\label{ineq:h_h1}
h_1 (r)\leq h(r) \leq c_4 8(1+2d) K_1(r) \leq c_4 8(1+2d) h_1(r)\,.
\end{align*}
Using Lemma~\ref{lem:equiv_scal_h} we get 
$\Aa$
%\eqref{eq:wlsc:h} 
for~$h_1$
with $\alpha_{h_1}=\alpha_{h_1}(d,c_4)$, $C_{h_1}=1$
and $\theta_{h_1}=T_4$. 
Since $h_1$ and $h$
are comparable we conclude
$\Aa$
 for~$h$.
Finally, the result holds with
$\alpha_3=\alpha_3(d,c_4)$, $c_3=c_3(d,c_4)$
and $T_3=T_4$.

\noindent
$\Cc \implies \Cb$.
By \eqref{ineq:comp_TJ} and our assumption ${\rm Re}[\LCh(x)]\geq c[ h(1/|x|)-h(T_3)]$
for all $x\in\Rd$ with $c=c(d,c_3)\leq 1$. Next,
by Lemma~\ref{lem:equiv_scal_h}
$\Aa$
holds with $\lah=\alpha_3$, $\theta_h=T_3$
and $C_h=c_d/c_3$, $c_d=16(1+2d)$.
In particular,
$h^{-1}(1/(ct))\geq (c c_3/c_d)^{1/\alpha_3} h^{-1}(1/t)$
for $t<1/h(T_3)$.
Further, $h(1/r)$ is increasing and satisfies ${\rm WLSC}(\alpha_3 ,1/T_3, c_3/c_d)$.
Then by \cite[Lemma~16]{MR3165234}  for $t<1/h(T_3)$,
\begin{align*}
\int_{\Rd} e^{-t\, {\rm Re}[\LCh(z)]} dz \leq 
e^{ct h(T_3)}
\int_{\Rd} e^{-ct\, h(1/|z|)} dz
\leq 
C
e^{ct h(T_3)}
\left[h^{-1}(1/(ct))\right]^{-d}
\leq c_2 \left[h^{-1}(1/t)\right]^{-d}.
\end{align*}
To sum up, $\Cb$ holds with $c_2=c_2(d,\alpha_3,c_3)$ and $T_2=1/h(T_3)$.
\qed

\begin{remark}\label{rem:d=1}
If $d=1$ the conditions $\Ca-\Cd$ are tantamount to conditions 
$\Aa-\Ac$.
Indeed, in such case $\Cd$ reduces to $\Ac$
with $\theta_h=T_4$ and $c$ related to $c_4$ according to
\eqref{ineq:comp_TJ}.
\end{remark}

\begin{remark}\label{rem:rot_inv}
If $Y$ is rotationally invatiant (see \cite[Definition~14.12]{MR1739520}),
then the conditions $\Ca-\Cd$ are tantamount to conditions 
$\Aa-\Ac$. 
In particular,
$\Cd$ lightens to $\Ac$.
\end{remark}
\noindent
We give a short justifications.
Plainly,
$\Cc$ implies $\Ad$.
On the other hand,
by
\cite[Exercise~18.3]{MR1739520}
we have
\begin{align*}
\left<x,Ax\right>+\int_{|\left<x,z\right>|<1} |\left<x,z\right>|^2\LM(dz)
&= a |x|^2+|x|^2 \int_{|z_i|<1/|x|} |z_i|^2 \LM(dz)\\
&\geq a |x|^2+|x|^2 \int_{|z|<1/|x|} |z_i|^2 \LM(dz)\,,
\qquad i=1,\ldots,d\,.
\end{align*}
Thus $\Ac$ and \eqref{ineq:comp_TJ} give
exactly $\Cd$ by
$$
\LCh^*(|x|)\leq 2 h(1/|x|)\leq 2 c K(1/|x|)\leq  2cd \left(\left<x,Ax\right>+\int_{|\left<x,z\right>|<1} |\left<x,z\right>|^2\LM(dz)\right).
$$

From the next result we see that $\Cb$ implies 
bounds for higher moments, i.e.,
bounds for the spatial derivatives of the density.

\begin{proposition}%\label{cor_C5}
The conditions of Theorem~\ref{thm:equiv} are equivalent with
\begin{enumerate}
\item[\Ce] There is $T_5\in(0,\infty]$ such that
for some (every) $m\in\N$ there is $c_5>0$ and for all $t<T_5$,
$$
\int_{\Rd} |z|^m e^{-t\, {\rm Re}[\LCh(z)]}\, dz \leq c_5 \left[h^{-1}(1/t)\right]^{-d-m}.
$$
\end{enumerate}
Moreover, $\Cc$ implies $\Ce$ with $c_5=c_5(d,m,\alpha_3,c_3)$ and $T_5=1/h(T_3)$.
\end{proposition}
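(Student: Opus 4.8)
The plan is to close the circle of equivalences: I will prove $\Cc\Rightarrow\Ce$ for every $m\in\N$ (with the announced constants), observe that $\Ce$ for every $m$ trivially implies $\Ce$ for some $m$, and prove that $\Ce$ with some $m$ implies $\Cb$. Since $\Cb\Leftrightarrow\Cc$ by Theorem~\ref{thm:equiv}, this proves the proposition and shows in passing that ``some $m$'' and ``every $m$'' in $\Ce$ are interchangeable.

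\emph{The direction $\Ce\Rightarrow\Cb$.} Fix $m\in\N$ for which $\Ce$ holds and set $\rho=1/h^{-1}(1/t)$, so that the ball $B_\rho$ has volume $\omega_d\rho^d/d=(\omega_d/d)[h^{-1}(1/t)]^{-d}$. On $B_\rho$ I bound $e^{-t{\rm Re}[\LCh(z)]}\le1$; on $B_\rho^c$, since $(|z|/\rho)^{-m}\le1$ there, one has $e^{-t{\rm Re}[\LCh(z)]}\le\rho^{-m}|z|^m e^{-t{\rm Re}[\LCh(z)]}$, and integrating this bound over $\Rd$ via $\Ce$ contributes at most $c_5\rho^{-m}[h^{-1}(1/t)]^{-d-m}=c_5[h^{-1}(1/t)]^{-d}$, because $\rho^{-m}=[h^{-1}(1/t)]^{m}$. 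Adding the two pieces yields $\Cb$ with $T_2=T_5$ and $c_2=\omega_d/d+c_5$; note that no scaling of $h$ is needed in this direction.

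\emph{The direction $\Cc\Rightarrow\Ce$.} I reuse the ingredients of the proof of $\Cc\Rightarrow\Cb$: under $\Cc$ one has ${\rm Re}[\LCh(x)]\ge c\,[h(1/|x|)-h(T_3)]$ for all $x\in\Rd$ with $c=c(d,c_3)\le1$, and $h$ satisfies $\Aa$ with $\lah=\alpha_3$, $\theta_h=T_3$ and $C_h=c_d/c_3$. For $t<T_5:=1/h(T_3)$ one has $cth(T_3)<c\le1$, hence by passing to polar coordinates
$$
\int_{\Rd}|z|^m e^{-t{\rm Re}[\LCh(z)]}\,dz\le e\,\omega_d\int_0^\infty r^{m+d-1}e^{-ct\,h(1/r)}\,dr\,,
$$
and everything reduces to bounding the last integral, a weighted analogue of \cite[Lemma~16]{MR3165234}. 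Put $r_0=1/h^{-1}(1/(ct))$, so that $ct\,h(1/r_0)=1$; since $1/(ct)>h(T_3)$ for $t<T_5$ (here $c\le1$ enters), also $1/r_0<T_3=\theta_h$. On $(0,r_0)$ the integrand is at most $r^{m+d-1}$, contributing $r_0^{m+d}/(m+d)$; on $(r_0,\infty)$ I apply $\Aa$ in the form $h(1/r)\ge C_h^{-1}(r/r_0)^{\lah}h(1/r_0)$ (legitimate because $1/r\le1/r_0<\theta_h$) to get $ct\,h(1/r)\ge C_h^{-1}(r/r_0)^{\lah}$, so the substitution $s=r/r_0$ bounds that part by $r_0^{m+d}\int_1^\infty s^{m+d-1}e^{-C_h^{-1}s^{\lah}}\,ds=:r_0^{m+d}A(d,m,\alpha_3,c_3)$. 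Thus the integral is at most $(1/(m+d)+A)[h^{-1}(1/(ct))]^{-d-m}$, and a final use of $\Ab$ of Lemma~\ref{lem:equiv_scal_h} with $\lambda=1/c\ge1$ and $u=1/t>h(\theta_h)$ replaces $[h^{-1}(1/(ct))]^{-d-m}$ by $(C_h/c)^{(d+m)/\lah}[h^{-1}(1/t)]^{-d-m}$. This yields $\Ce$ with $T_5=1/h(T_3)$ and $c_5=c_5(d,m,\alpha_3,c_3)$, exactly as asserted.

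\emph{Main obstacle.} The only step requiring care is the bound on $\int_0^\infty r^{m+d-1}e^{-ct\,h(1/r)}\,dr$: one has to notice that the range $t<T_5=1/h(T_3)$ is exactly what forces $r_0>1/T_3$, so that the weak lower scaling of $h$ at the origin is available on the whole tail $(r_0,\infty)$ and its stretched-exponential decay there dominates the polynomial weight $r^{m+d-1}$ uniformly in $m$; the rest is routine bookkeeping of constants.
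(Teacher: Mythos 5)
Your proof is correct. The $\Ce\Rightarrow\Cb$ direction is identical to the paper's. The only place you diverge is in $\Cc\Rightarrow\Ce$: after the reduction to $\int_0^\infty r^{m+d-1}e^{-ct\,h(1/r)}\,dr$, you bound this radial integral from scratch by splitting at $r_0=1/h^{-1}(1/(ct))$, using $e^{-ct\,h(1/r)}\le 1$ on the inner part and the stretched-exponential decay coming from $\Aa$ on the tail — in effect reproving the needed estimate inline. The paper instead uses a dimension trick: it rewrites $\omega_d\int_0^\infty r^{m+d-1}e^{-ct\,h(1/r)}\,dr=(\omega_d/\omega_{m+d})\int_{\R^{m+d}}e^{-ct\,h(1/|\xi|)}\,d\xi$, absorbing the polynomial weight $r^m$ into the dimension of the integral, and then simply cites \cite[Lemma~16]{MR3165234} in dimension $m+d$. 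The two are mathematically equivalent (Lemma~16 is itself proved by the same split you perform), but the paper's reduction is slicker since it reuses an existing lemma, while yours is more self-contained and makes the dependence of $c_5$ on the constants completely explicit.
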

\pf
First we show that $\Cc$ gives $\Ce$ for every $m\in\N$.
By \eqref{ineq:comp_TJ} and our assumption there is $c=c(d,c_3)\leq 1$
such that for all $t>0$,
\begin{align*}
\int_{\Rd}|z|^m e^{-t\, {\rm Re}\left[\LCh(z)\right]}\,dz 
&
\leq 
e^{ct h(T_3)}
\int_{\Rd\setminus \{0\}} |z|^m e^{- c t\,  h(1/|z|)} \,dz
= e^{ct h(T_3)}\omega_d \int_0^{\infty} e^{- c t\,  h(1/r)} r^{m+d-1} \,dr\\
&= e^{ct h(T_3)} \frac{\omega_d}{\omega_{m+d}} \int_{\R^{m+d}\setminus \{0\}} e^{-c t\, h(1/|\xi|)}\,d\xi\,.
\end{align*}
Let $c_d=16(1+2d)$. By Lemma~\ref{lem:equiv_scal_h}
$h(1/r)$ satisfies ${\rm WLSC}(\alpha_3 ,1/T_3, c_3/c_d)$
and $h^{-1}(1/(ct))\geq (c c_3/c_d)^{1/\alpha_3} h^{-1}(1/t)$
for $t<1/h(T_3)$.
By 
\cite[Lemma~16]{MR3165234}
for all $t<1/h(T_3)$,
\begin{align*}
\int_{\R^{m+d}\setminus \{0\}} e^{-c t\,  h(1/|\xi|)}\,d\xi
\leq C \big[ h^{-1}(1/(ct)) \big]^{-d-m}
\leq c_5 \big[ h^{-1}(1/t) \big]^{-d-m}\,.
\end{align*}
Here $c_5=c_5(d,m,\alpha_3,c_3)$.
It remains to prove that if $\Ce$ holds for some $m\in\N$, then $\Cb$ also holds.
Indeed, $\Cb$ follows by
\begin{align*}
\int_{\Rd} e^{-t\, {\rm Re}[\LCh(z)]}\, dz
\leq \int_{|z|\leq 1/h^{-1}(1/t)}dz+  \left[h^{-1}(1/t)\right]^m \int_{|z|>1/h^{-1}(1/t)}|z|^m e^{-t\, {\rm Re}[\LCh(z)]}\, dz.
\end{align*}
\qed

Observe that
for all $r_1,r_2>0$ we have
\begin{align}\label{ineq:dif_br}
|\drf_{r_1}-\drf_{r_2}|\leq \int_{r_1\land r_2\leq |z| < r_1\vee r_2} |z| \LM(dz)
\leq (r_1\vee r_2)h(r_1\land r_2) \,.
\end{align}

\begin{lemma}\label{lem:CI}
The conditions of Theorem~\ref{thm:equiv} imply that
\begin{enumerate}
\item[\CI] 
The density $p(t,x)$ of $Y_t$ exists and
there are $T\in (0,\infty]$, $c\in [1,\infty)$ such that for every
$t<T$ there exists 
$|x_t|\leq c h^{-1}(1/t)$
so that for every $|y|\leq (1/c)h^{-1}(1/t)$,
$$
p(t,y+x_t+t\drf_{[h^{-1}(1/t)]})\geq (1/c) \left[h^{-1}(1/t)\right]^{-d}\,.
$$
\end{enumerate}
Moreover, $\Cc$ implies  $\CI$ with $c=c(d,\alpha_3,c_3)$ and $T=1/h(T_3/c)$.
If $T_3<\infty$
in $\Cc$,
then $\CI$ holds for every $T>0$ with $c=c(d,\alpha_3,c_3,T_3,T,h)$.
\end{lemma}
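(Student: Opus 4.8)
The plan is to rescale and recentre the distribution of $Y_t$. Put $\rho=\rho(t)=h^{-1}(1/t)$ and let $q_t(y)=\rho^{d}p(t,\rho y+t\drf_{\rho})$ be the density of $(Y_t-t\drf_{\rho})/\rho$; the density $p(t,\cdot)$ exists by $\Ca$ (Theorem~\ref{thm:equiv}), and $t\drf_{\rho}$ is precisely the shift $t\drf_{[h^{-1}(1/t)]}$ appearing in $\CI$. Two ingredients are needed. First, a uniform Lipschitz estimate: $p(t,\cdot)\in C_0\cap C^{1}$ by $\Cb$ and $\Ce$ (the latter with $m=1$), and differentiating under the Fourier integral $p(t,x)=(2\pi)^{-d}\int e^{-i\<x,z\>}e^{-t\LCh(z)}\,dz$ gives
\begin{equation*}
\|\nabla p(t,\cdot)\|_{\infty}\ \leq\ (2\pi)^{-d}\!\int_{\R^d}|z|\,e^{-t\,{\rm Re}[\LCh(z)]}\,dz\ \leq\ (2\pi)^{-d}c_5\,[h^{-1}(1/t)]^{-d-1}\,.
\end{equation*}
Second, a concentration estimate: $\PP(|Y_t-t\drf_{\rho}|\geq 2R\rho)\leq 1/4$ for a suitable fixed $R$. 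Granting the second, since $\int q_t=1$ there is $y_t\in B_{2R}$ with $q_t(y_t)\geq(3/4)/|B_{2R}|=:\delta$; setting $x_t=\rho\,y_t$ (so $|x_t|\leq 2R\,h^{-1}(1/t)$) and using the Lipschitz bound, for $|y|\leq\eta\,h^{-1}(1/t)$ with $\eta:=\delta(2\pi)^{d}/(2c_5)$ we get
\begin{equation*}
p\big(t,\,y+x_t+t\drf_{[h^{-1}(1/t)]}\big)\ \geq\ \delta\,[h^{-1}(1/t)]^{-d}-(2\pi)^{-d}c_5\,[h^{-1}(1/t)]^{-d-1}\,\eta\,h^{-1}(1/t)\ \geq\ \tfrac{\delta}{2}\,[h^{-1}(1/t)]^{-d}\,,
\end{equation*}
so $\CI$ follows with $c=\max\{1,\,2R,\,1/\eta,\,2/\delta\}$.

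The heart of the matter is the concentration estimate, which I would prove by a small/large jump decomposition (see also \cite{MR632968}). By Theorem~\ref{thm:equiv} all of $\Ca$--$\Cd$ hold, so by $\Cc$ and Lemma~\ref{lem:equiv_scal_h} the function $h$ satisfies $\Aa$ with $\lah=\alpha_3$, $\theta_h=T_3$ and $C_h=c_d/c_3$, $c_d=16(1+2d)$. Fix $t$ with $R\rho<\theta_h$ and split $Y_s=M_s+J_s$ at level $R\rho$, where $M$ gathers the Gaussian part, the drift and the compensated jumps smaller than $R\rho$ --- so $s\mapsto M_s-s\drf_{R\rho}$ is a mean-zero martingale with $\E|M_t-t\drf_{R\rho}|^{2}\leq d\,t\,(R\rho)^{2}K(R\rho)$ --- and $J$ is the compound Poisson process of jumps of size $\geq R\rho$, of intensity $\LM(B_{R\rho}^{c})$. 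Doob's $L^{2}$-inequality, $K\leq h$ and Lemma~\ref{lem:basic_prop_K_h} yield
\begin{equation*}
\PP\big(|Y_t-t\drf_{R\rho}|\geq R\rho\big)\ \leq\ \PP\Big(\sup_{s\leq t}|M_s-s\drf_{R\rho}|\geq\tfrac{R\rho}{2}\Big)+\PP\big(J\text{ jumps in }[0,t]\big)\ \leq\ 4d\,t\,K(R\rho)+t\,\LM(B_{R\rho}^{c})\ \leq\ (4d+1)\,t\,h(R\rho)\,.
\end{equation*}
Since $t\,h(\rho)=1$ and $t\,h(R\rho)\leq C_hR^{-\alpha_3}$ by $\Aa$, choosing $R=R(d,\alpha_3,c_3)$ with $(4d+1)C_hR^{-\alpha_3}\leq 1/4$ gives $\PP(|Y_t-t\drf_{R\rho}|\geq R\rho)\leq1/4$. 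Finally $\rho\leq R\rho$, so \eqref{ineq:dif_br} gives $t|\drf_{\rho}-\drf_{R\rho}|\leq t\,R\rho\,h(\rho)=R\rho$, hence $|Y_t-t\drf_{\rho}|\leq|Y_t-t\drf_{R\rho}|+R\rho$ and $\PP(|Y_t-t\drf_{\rho}|\geq2R\rho)\leq1/4$, as required.

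Collecting dependencies, $R$ depends only on $d,\alpha_3,c_3$, and $c_5=c_5(d,1,\alpha_3,c_3)$ by the Proposition following Theorem~\ref{thm:equiv} applied through $\Cc$, so $c=c(d,\alpha_3,c_3)$; since the concentration required $R\,h^{-1}(1/t)<\theta_h=T_3$, everything works for $t<1/h(T_3/R)$, an interval which (as $c\geq R$) contains $\{t<1/h(T_3/c)\}$ and lies inside $\{t<1/h(T_3)=T_5\}$ where $\Ce$ is available --- this gives the first ``moreover'', and when $T_3=\infty$ one has $\theta_h=\infty$ and the whole argument runs for all $t>0$. For the last assertion, fix $T>0$ with $T_3<\infty$; it remains to cover $t\in[1/h(T_3/c),T)$, for which $\rho=h^{-1}(1/t)$ stays in the compact set $[T_3/c,\,h^{-1}(1/T)]\subset(0,\infty)$. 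Here I would repeat the concentration step with the maximal inequality applied at the \emph{fixed} radius $M:=h^{-1}\big(1/(4(4d+1)T)\big)\geq h^{-1}(1/T)\geq\rho$, which gives $\PP(|Y_t-t\drf_{M}|\geq M)\leq(4d+1)T\,h(M)\leq 1/4$ and then, via \eqref{ineq:dif_br} exactly as above, $\PP(|Y_t-t\drf_{\rho}|\geq 2M)\leq1/4$; and I would replace the first ingredient by $\|\nabla p(t,\cdot)\|_{\infty}\leq(2\pi)^{-d}\int_{\R^d}|z|\,e^{-t_0{\rm Re}[\LCh(z)]}\,dz<\infty$ for $t\geq t_0:=1/h(T_3/c)$, whose finiteness and size follow from $\Cc$, \eqref{ineq:comp_TJ} and \cite[Lemma~16]{MR3165234} (note ${\rm Re}[\LCh(z)]\gtrsim h(1/|z|)\to\infty$ as $|z|\to\infty$). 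On this range $M$ and $\rho$ are comparable with constants depending only on $d,\alpha_3,c_3,T_3,T,h$, and running the final assembly of the first paragraph yields $\CI$ on $[1/h(T_3/c),T)$ with a constant of that dependence.

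The main obstacle I expect is the concentration estimate together with its bookkeeping: producing it centred at the \emph{correct} drift $t\drf_{[h^{-1}(1/t)]}$ forces one to work first at the enlarged level $R\rho$, to invoke the scaling $\Aa$ to defeat the large-jump intensity, and then to return to $\drf_{\rho}$ through \eqref{ineq:dif_br}; and the extension to an arbitrary $T$ when $T_3<\infty$ rests on the (simple but easily overlooked) observation that on a fixed time window $h^{-1}(1/t)$ is bounded away from $0$ and $\infty$, which is what allows freezing the radius in the maximal inequality while keeping all constants in the advertised form.
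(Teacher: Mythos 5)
Your proof is correct, and the core small-time argument (concentration $+$ gradient bound from $\Ce$ $+$ shift transfer via \eqref{ineq:dif_br}) coincides with the paper's. There are two genuine divergences. First, for the concentration estimate $\PP(|Y_t - t\drf_{R\rho}|\geq R\rho)\lesssim t\,h(R\rho)$, the paper simply cites Pruitt \cite{MR632968}; you re-derive it by splitting at level $R\rho$ into a compensated-small-jump martingale (controlled by Doob's $L^2$-inequality and $K\leq h$) plus a compound Poisson large-jump piece of intensity $\LM(B^c_{R\rho})$, which is exactly how Pruitt's bound is proved -- self-contained, but nothing new is bought. The more substantive difference is in the extension to an arbitrary $T$ when $T_3<\infty$: the paper runs a time-doubling induction via the Chapman--Kolmogorov equation, propagating the near-diagonal bound from $[0,t_0)$ to $[0,2T)$ step by step, whereas you observe that on the compact window $t\in[1/h(T_3/c),T)$ the quantity $h^{-1}(1/t)$ stays in a compact subinterval of $(0,\infty)$, so one can freeze the cutoff radius at a fixed $M$ determined by $T$, get concentration directly, and bound $\|\nabla p(t,\cdot)\|_\infty$ by the fixed-time Fourier integral at $t_0$. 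Your route is more elementary and avoids the induction bookkeeping, at the cost of a constant that is expressed through the compactness of the window rather than through an explicit recursion; the paper's Chapman--Kolmogorov argument is the more robust template (it reappears verbatim in the large-time lower bound proofs of Section~\ref{sebsec:low_C3}), which is presumably why the authors prefer it here as well. Both yield a constant of the advertised dependence $c(d,\alpha_3,c_3,T_3,T,h)$.
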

\begin{proof}
We note that
there is $a_0=a_0(d,\alpha_3,c_3)\geq 1$ such that
for 
$\lambda:=a_0 h^{-1}(1/t)<T_3$
we have
$
\PP(|Y_t-t \drf_{\lambda}|\geq \lambda)\leq 1/2
$.
Indeed, by
\cite{MR632968}
there is $c=c(d)$ such that for
$r=\lambda$,
\begin{align*}
\PP(|Y_t -t \drf_{\lambda}|\geq r) 
&\leq c t  \left( r^{-1} \left| (\drf-\drf_{\lambda})+\int_{\Rd} z \left(\ind_{|z|<r} - \ind_{|z|<1}\right)\LM(dz)\right|  +h(r)\right)
= ct  h(r)\,,
\end{align*}
and applying Lemma~\ref{lem:equiv_scal_h} we get
%for $r<T_3$ that
$
h(r)=h(\lambda)\leq (c_d/c_3) a_0^{-\alpha_3} h(\lambda/a_0)=(c_d/c_3) a_0^{-\alpha_3} t^{-1}
$.
Then
\begin{align}\label{step:sym_rem}
1/2 \leq 1-\PP(|Y_t - t \drf_{\lambda}|\geq \lambda) 
= \int_{|x-t \drf_{\lambda}|< \lambda } p(t,x)\,dx
\leq  
\omega_d \,
\lambda^d
 \sup_{|x|< \lambda } \big[ p(t,x+t \drf_{\lambda}) \big].
\end{align}
Therefore, by the continuity of $p$,
whenever $\lambda<T_3$, 
then there exists 
$|\xi_t|\leq \lambda$ such that
$p(t, \xi_t+t \drf_{\lambda} ) \geq 1/(2\omega_d)\, \lambda^{-d}$.
Further, by $\Ce$ there is $c_5=c_5(d,\alpha_3,c_3)$ such that
$
\sup_{x\in\Rd} |\nabla_x p(t,x)| \leq c_5 /(2\omega_d) \, \lambda^{-d-1} 
$
for every $t<1/h(T_3)$.
This gives  for $\lambda<T_3$ and $|y|\leq 1/(2 c_5)\, \lambda$,
$$
p(t,\xi_t+t \drf_{\lambda}+y)\geq p(t,\xi_t+t\drf_{\lambda})-|y| \sup_{x\in\Rd} |\nabla_x p(t,x)|\geq 1/(4 \omega_d)  \, \lambda^{-d}.
$$
Finally, for
 every $t<1/h(T_3/a_0)$,
$x_t=\xi_t+t(\drf_{\lambda}-\drf_{[h^{-1}(1/t)]})$
and every $|y|\leq a_0/(2 c_5)\,  h^{-1}(1/t)$,
\begin{align*}%\label{ineq:lower_on_small_ball}
p(t,x_t +t\drf_{[h^{-1}(1/t)]}+y)=p(t,\xi_t + t \drf_{\lambda}+y)\geq 
1/(4 \omega_d) \left[ a_0  h^{-1}(1/t)\right]^{-d}\,.
\end{align*}
Note  that $|x_t|\leq 2 a_0 h^{-1}(1/t)$,
because 
by \eqref{ineq:dif_br} we have
$
t |\drf_{\lambda}-\drf_{[h^{-1}(1/t)]}|
\leq \lambda$.
Now we prove the last sentence of
the statement.
It suffices to show that if $\CI$
hods with $T>0$ and $c\geq 1$,
then it also holds with $2T$ and a modified $c$,
where the modificaton depends only on $d,\alpha_3,c_3,T_3,T,h$.
Let $t<2T$ and $x_t=2 x_{t/2}-t\drf_{[h^{-1}(1/t)]}+t\drf_{[h^{-1}(2/t)]}$. Then
by Chapman-Kolmogorov equation,
\begin{align*}
&p(t,y+x_t+t\drf_{[h^{-1}(1/t)]})\\
&\geq  \int_{|z|<(1/c) h^{-1}(2/t)} 
p(t/2,y-z+x_{t/2}+(t/2)\drf_{[h^{-1}(2/t)]})\,
p(t/2,z+x_{t/2}+(t/2)\drf_{[h^{-1}(2/t)]})\,dz\\
&\geq \int_{|z|<(1/c) h^{-1}(2/t)} 
p(t/2,y-z+x_{t/2}+(t/2)\drf_{[h^{-1}(2/t)]})\,dz
\,(1/c) \left[h^{-1}(2/t)\right]^{-d}.
\end{align*}
By Lemma~\ref{lem:equiv_scal_h}
and the monotonicity of $h^{-1}$
there is $\tilde{c}=\tilde{c}(\alpha_3,c_3,T_3,T,h)$
such that
$h^{-1}(u)\leq \tilde{c} h^{-1}(2u)$, $u>1/(2T)$.
Then for $|y|\leq 1/(2c \tilde{c}) \,h^{-1}(1/t)$
and $|z|< 1/(2c) \,h^{-1}(2/t)$
we have
$|y-z|\leq (1/c)h^{-1}(2/t)$,
thus
\begin{align*}
\int_{|z|<(1/c) h^{-1}(2/t)} 
p(t/2,y-z+x_{t/2}+(t/2)\drf_{[h^{-1}(2/t)]})\,dz
\geq  (1/c) \omega_d (2c)^{-d}.
\end{align*}
Note that $|x_t|\leq 2 (c+1)h^{-1}(1/t)$
 by 
the bound of $|x_{t/2}|$
and
\eqref{ineq:dif_br}.
The proof is complete.
\end{proof}

Here are two consequences
of
merging
 Lemma~\ref{lem:CI}
with the condition $\Ca$
(note that $\Cf$ implies $\Ca$ by integrating over a ball 
of radius $(1/c_6)h^{-1}(1/t)$).

\begin{corollary}\label{lem:C6}
The conditions of Theorem~\ref{thm:equiv} are equivalent with
\begin{enumerate}
\item[\Cf] 
The density $p(t,x)$ of $Y_t$ exists and
there are $T_6\in (0,\infty]$, $c_6\in [1,\infty)$ such that for every
$t<T_6$ there exists 
$|x_t|\leq c_6 h^{-1}(1/t)$
so that for every $|y|\leq (1/c_6)h^{-1}(1/t)$,
$$
p(t,y+x_t+t\drf_{[h^{-1}(1/t)]})\geq (1/c_6) \sup_{x\in\Rd} p(t,x)\,.
$$
\end{enumerate}
Moreover, $\Cc$ implies  $\Cf$ with $c_6=c_6(d,\alpha_3,c_3)$ and $T_6=1/h(T_3/c_6)$.
If $T_3<\infty$
in $\Cc$,
then $\Cf$ holds for every $T_6>0$ with $c_6=c_6(d,\alpha_3,c_3,T_3,T_6,h)$.
\end{corollary}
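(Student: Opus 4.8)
The idea is to read off $\Cf$ by sandwiching the density between information already at our disposal: the qualitative lower bound of Lemma~\ref{lem:CI} (condition $\CI$) and the on-diagonal upper bound $\Ca$, which is one of the equivalent conditions of Theorem~\ref{thm:equiv}. Since $\Ca$ belongs to that list, it suffices to prove $\Cf\implies\Ca$ and $\{\text{conditions of Theorem~\ref{thm:equiv}}\}\implies\Cf$. The first implication comes essentially for free by integrating the lower bound in $\Cf$ over a ball, as noted before the statement; the second is a direct comparison.

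First I would prove $\Cf\implies\Ca$. If $\Cf$ holds with $T_6,c_6$, then for $t<T_6$, integrating the inequality in $\Cf$ over $\{|y|\leq h^{-1}(1/t)/c_6\}$ and using $\int_{\Rd}p(t,x)\,dx=1$ gives
$$
1\;\geq\;\frac{\omega_d}{d}\left(\frac{h^{-1}(1/t)}{c_6}\right)^{d}\frac{1}{c_6}\sup_{x\in\Rd}p(t,x),
$$
hence $\sup_{x\in\Rd}p(t,x)\leq (d\,c_6^{\,d+1}/\omega_d)\,[h^{-1}(1/t)]^{-d}$, which is $\Ca$ with $T_1=T_6$.

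For the converse, $\Ca$ being among the conditions of Theorem~\ref{thm:equiv} provides $T_1,c_1$ with $\sup_{x\in\Rd}p(t,x)\leq c_1[h^{-1}(1/t)]^{-d}$ for $t<T_1$, while Lemma~\ref{lem:CI} provides $T,c$ and, for each $t<T$, a point $x_t$ with $|x_t|\leq c\,h^{-1}(1/t)$ and $p(t,y+x_t+t\drf_{[h^{-1}(1/t)]})\geq c^{-1}[h^{-1}(1/t)]^{-d}$ whenever $|y|\leq h^{-1}(1/t)/c$. Taking $c_6=c\vee(c\,c_1)$ and $T_6=T_1\wedge T$, for $t<T_6$ and $|y|\leq h^{-1}(1/t)/c_6$ these two bounds combine into
$$
p(t,y+x_t+t\drf_{[h^{-1}(1/t)]})\;\geq\;\frac1c[h^{-1}(1/t)]^{-d}\;\geq\;\frac{1}{c\,c_1}\sup_{x\in\Rd}p(t,x),
$$
which is precisely $\Cf$. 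For the sharpening under $\Cc$ it only remains to bookkeep constants: the chain $\Cc\implies\Cb\implies\Ca$ of Theorem~\ref{thm:equiv} gives $c_1=c_1(d,\alpha_3,c_3)$ and $T_1=1/h(T_3)$ (the second step being just the inverse Fourier transform), Lemma~\ref{lem:CI} gives $c=c(d,\alpha_3,c_3)$ and $T=1/h(T_3/c)$, and since $c\geq1$ and $h$ is decreasing, $T_1\wedge T=1/h(T_3/c)$; this yields the stated dependencies.

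It remains to handle $T_3<\infty$. Then the last sentence of Lemma~\ref{lem:CI} already furnishes the lower bound for every $t<T$ with an arbitrary prescribed $T>0$, at the cost $c=c(d,\alpha_3,c_3,T_3,T,h)$; one only has to upgrade $\Ca$ from its intrinsic range $t<1/h(T_3)$ to $t<T_6$. This is soft: $t\mapsto\sup_{x\in\Rd}p(t,x)$ is finite for every $t>0$ ($p(t,\cdot)\in C_0(\Rd)$) and non-increasing (for $t>t_0$ the density $p(t,\cdot)$ is the convolution of $p(t_0,\cdot)$ with a probability measure), whereas $t\mapsto [h^{-1}(1/t)]^{-d}$ is continuous and strictly positive on $(0,\infty)$ (as $h$, hence $h^{-1}$, is continuous and strictly decreasing, and $h(0^+)=\infty$); therefore on any compact subinterval of $(0,\infty)$ the ratio of the former to the latter is bounded, and comparing on $[t_0,T_6]$ for a fixed $t_0\in(0,1/h(T_3))$ extends $\Ca$ to $t<T_6$ with a constant depending only on $d,\alpha_3,c_3,T_3,T_6,h$. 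Combining with the extended Lemma~\ref{lem:CI} exactly as above completes the proof. Nothing here is genuinely difficult; the only points requiring care are this final extension of $\Ca$ and the bookkeeping of the constants.
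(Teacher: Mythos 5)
Your proof is correct and follows essentially the same route the paper indicates: $\Cf\Rightarrow\Ca$ by integrating the lower bound over a ball, and $\Cc\Rightarrow\Cf$ by sandwiching Lemma~\ref{lem:CI} against the on-diagonal upper bound from~$\Ca$. The constant bookkeeping is right (including the observation $T_1\wedge T=1/h(T_3/c)$), and your extension of $\Ca$ to an arbitrary finite horizon when $T_3<\infty$ — via monotonicity of $t\mapsto\sup_x p(t,x)$ and positivity/continuity of $t\mapsto[h^{-1}(1/t)]^{-d}$ on compacta — is a legitimate and clean way to match the paper's stated dependence $c_6=c_6(d,\alpha_3,c_3,T_3,T_6,h)$.
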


The next corollary,
which is in the spirit of $\Ca$,
gives another connection with
the existing literature,
cf.  \cite[Theorem~2.1]{MR3139314}.

\begin{corollary}\label{cor:C7}
The conditions of Theorem~\ref{thm:equiv} are equivalent with
\begin{enumerate}
\item[\Cg]
The density  $p(t,x)$ of $Y_t$ exists and there are $T_7\in (0,\infty]$, $c_7\in [1,\infty)$
such that for all $t<T_7$,
$$
c_7^{-1} \left[h^{-1}(1/t)\right]^{-d}\leq \sup_{x\in\Rd} p(t,x)  \leq c_7 \left[h^{-1}(1/t)\right]^{-d}\,.
$$
\end{enumerate}
Moreover, $\Cc$ implies $\Cg$ with 
$c_7=c_7(d,\alpha_3,c_3)$ and $T_7=1/h(T_3/c_7)$.
If $T_3<\infty$
in $\Cc$,
then $\Cg$ holds for every $T_7>0$ with $c_7=c_7(d,\alpha_3,c_3,T_3,T_7,h)$.
\end{corollary}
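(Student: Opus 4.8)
The plan is to observe that $\Cg$ contains $\Ca$ and to recover the missing lower bound from Lemma~\ref{lem:CI}. Indeed, the right-hand inequality in $\Cg$ is verbatim the estimate of $\Ca$ (with $c_1=c_7$ and $T_1=T_7$), so $\Cg$ trivially implies all conditions of Theorem~\ref{thm:equiv}. Hence only the converse requires an argument, and there the sole task is to bound $\sup_{x\in\Rd}p(t,x)$ from below.

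Assume the conditions of Theorem~\ref{thm:equiv}. Then $p(t,\cdot)$ exists and $\Ca$ gives $\sup_{x\in\Rd}p(t,x)\le c_1[h^{-1}(1/t)]^{-d}$ for $t<T_1$. By Lemma~\ref{lem:CI} the same hypotheses yield $\CI$: there are $T\in(0,\infty]$ and $c\in[1,\infty)$ so that for each $t<T$ one may pick $|x_t|\le c\,h^{-1}(1/t)$ with
\begin{align*}
p\big(t,y+x_t+t\drf_{[h^{-1}(1/t)]}\big)\ge (1/c)\,[h^{-1}(1/t)]^{-d},\qquad |y|\le (1/c)h^{-1}(1/t).
\end{align*}
Setting $y=0$ gives $\sup_{x\in\Rd}p(t,x)\ge p\big(t,x_t+t\drf_{[h^{-1}(1/t)]}\big)\ge (1/c)[h^{-1}(1/t)]^{-d}$ for $t<T$, so $\Cg$ holds with $c_7=\max\{c_1,c\}$ and $T_7=\min\{T_1,T\}$. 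This proves the claimed equivalence.

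For the quantitative statement, start from $\Cc$ with threshold $T_3$. The chain $\Cc\Rightarrow\Cb\Rightarrow\Ca$ of Theorem~\ref{thm:equiv} (the last step being inversion of the Fourier transform) gives $\Ca$ with $c_1=c_1(d,\alpha_3,c_3)$ and $T_1=1/h(T_3)$, while Lemma~\ref{lem:CI} gives $\CI$ with $c=c(d,\alpha_3,c_3)$ and $T=1/h(T_3/c)$. Since $c\ge1$ and $h$ is strictly decreasing, for $c_7:=\max\{c_1,c\}$ we have $T_3/c_7\le T_3/c\le T_3$, hence $1/h(T_3/c_7)\le T\le T_1$; by the previous paragraph $\Cg$ then holds with $c_7=c_7(d,\alpha_3,c_3)$ and $T_7=1/h(T_3/c_7)$.

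Finally, suppose $T_3<\infty$. Lemma~\ref{lem:CI} now supplies $\CI$ for every $T>0$ with $c=c(d,\alpha_3,c_3,T_3,T,h)$, so the lower bound of $\Cg$ is available on any interval $(0,T_7)$. For the matching upper bound I would extend $\Ca$ beyond $1/h(T_3)$ using that $t\mapsto\sup_{x\in\Rd}p(t,x)$ is non-increasing (Chapman-Kolmogorov together with $\int_{\Rd}p(t,x)\,dx=1$) and that $t\mapsto[h^{-1}(1/t)]^{-d}$ is non-increasing as well: fixing any $t_0<1/h(T_3)$, for $t_0\le t<T_7$,
\begin{align*}
\sup_{x\in\Rd}p(t,x)\le \sup_{x\in\Rd}p(t_0,x)\le c_1[h^{-1}(1/t_0)]^{-d}\le c_1\Big(\frac{h^{-1}(1/T_7)}{h^{-1}(1/t_0)}\Big)^{d}[h^{-1}(1/t)]^{-d},
\end{align*}
so $\Ca$ extends to all of $(0,T_7)$ with a constant depending only on $d,\alpha_3,c_3,T_3,T_7,h$; together with the lower bound this yields $\Cg$ for every $T_7>0$ with $c_7=c_7(d,\alpha_3,c_3,T_3,T_7,h)$. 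I expect the only delicate point to be this last bit of bookkeeping — matching the enlarged constants with the rescaled thresholds — everything else following directly from Lemma~\ref{lem:CI}.
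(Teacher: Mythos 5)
Your argument is correct and is exactly the route the paper has in mind: the paper states Corollary~\ref{cor:C7} as a direct consequence of ``merging Lemma~\ref{lem:CI} with $\Ca$,'' and you simply make that merger explicit, including the observation that the upper bound in $\Cg$ is verbatim $\Ca$ and the monotonicity argument ($t\mapsto\sup_x p(t,x)$ and $t\mapsto[h^{-1}(1/t)]^{-d}$ both non-increasing) needed to push $T_7$ past $1/h(T_3)$ when $T_3<\infty$. The constant-tracking via $c_7=\max\{c_1,c\}$ and the comparison $1/h(T_3/c_7)\le\min\{T_1,T\}$ is exactly the bookkeeping required, so there is nothing missing.
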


We 
elucidate
a crucial difference between a general (possibly non-symmetric) case and
the situation when $\drf=0$ and $\LM(dz)$ is symmetric.
\begin{remark}\label{rem:sym}
If $Y$ is a symmetric L{\'e}vy process we have $\drf_r=0$ for all $r>0$ and
moreover we can take $x_t=0$ in the statements of
Lemma~\ref{lem:CI} and
Corollary~\ref{lem:C6}. Therefore the two results
provide a lower (near-diagonal) bound for $p(t,y)$.
Indeed, in the proof of
\eqref{step:sym_rem} we have
$$
 \sup_{|x|< \lambda } \big[ p(t,x+t \drf_{\lambda}) \big]=p(t,0)
$$
and we may take $\xi_t=0$ and thus also $x_t=0$.
\end{remark}

There are at least several ways how to reformulate  the condition $\Cc$, only using 
\eqref{ineq:comp_TJ}
and Lemma~\ref{lem:equiv_scal_h},
to discover more about its meaning.
We will present one such reformulation which formalizes
the description of \eqref{initial} presented in the introduction. 
\begin{lemma}\label{lem:C8}
The conditions of Theorem~\ref{thm:equiv} are equivalent with
\begin{enumerate}
\item[\Ch] 
There are $T_8\in (0,\infty]$, $c_8\in [1,\infty)$ and $\alpha_8\in (0,2]$ such that for 
every projection $\Pi_1$ on a one-dimensional subspace of $\Rd$,
$$ h(r)\leq c_8\, h_1(r)\qquad \mbox{and}
\qquad 
h(r)\leq c_8 \,\lambda^{\alpha_8} h(\lambda r)
\,,\quad \lambda \leq  1,\,r<T_8\,.
$$
where $h_1$ corresponds to a projected L{\'e}vy process $\Pi_1 Y$.
\end{enumerate}
\end{lemma}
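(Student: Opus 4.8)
The plan is to show $\Ch \iff \Cc$ using only the comparison \eqref{ineq:comp_TJ}, Lemma~\ref{lem:equiv_scal_h}, and the projection formulae from \cite[Proposition~11.10]{MR1739520} already recalled in the proof of Theorem~\ref{thm:equiv}. For the direction $\Cc \implies \Ch$, I would start from $\Cd$ (which is equivalent to $\Cc$), i.e.\ from the inequality $2h(r)\leq c_4 K_1(r)$ established as \eqref{eq:crucial} in the proof $\Cb\implies\Cd$, valid uniformly over all one-dimensional projections $\Pi_1$ for $r<T_4$. Since $K_1(r)\leq h_1(r)$ always holds, this immediately gives $h(r)\leq (c_4/2)\,h_1(r)$, the first inequality of $\Ch$. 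The second inequality of $\Ch$ is just $\Aa$ for $h$, which is part of $\Cc$ via Lemma~\ref{lem:equiv_scal_h} (the scaling of $\LCh^*$ transfers to $h$ by \eqref{ineq:comp_TJ}). So this direction is essentially a repackaging of work already done for Theorem~\ref{thm:equiv}.

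For the converse $\Ch \implies \Cc$, I would aim to recover $\Cd$. Fix $x\neq 0$, set $v=x/|x|$, let $\Pi_1$ be the projection onto $V=\mathbb{R}v$, and use, as in the proof of $\Cd\implies\Cc$, that
\begin{align*}
K_1(1/|x|)=\left<x,Ax\right>+\int_{|\left<x,z\right>|<1}|\left<x,z\right>|^2\LM(dz)\,.
\end{align*}
The first hypothesis of $\Ch$ gives $h(r)\leq c_8 h_1(r)$; combined with the second hypothesis ($\Aa$ for $h$ with exponent $\alpha_8$) and Lemma~\ref{lem:equiv_scal_h}, $h_1$ inherits $\Aa$ with a comparable constant, hence $\Ac$ holds for $h_1$: $h_1(r)\leq c\,K_1(r)$ for $r<T_8$ with $c=c(\alpha_8,c_8)$. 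Chaining, $h(r)\leq c_8 h_1(r)\leq c_8 c\,K_1(r)$, and then \eqref{ineq:comp_TJ} yields $\LCh^*(|x|)\leq 2h(1/|x|)\leq 2c_8 c\,K_1(1/|x|)$, which is exactly $\Cd$ with $T_4=T_8$ and $c_4=2c_8c$. Since $x$ was arbitrary and the constant does not depend on $\Pi_1$, this is uniform as required.

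The main obstacle is bookkeeping rather than depth: one must check that applying Lemma~\ref{lem:equiv_scal_h} to the projected function $h_1$ produces constants depending only on $d,\alpha_8,c_8$ and not on the particular projection, and that the cutoff $\theta_h=T_8$ is preserved (up to the harmless adjustments already present in Lemma~\ref{lem:equiv_scal_h}, e.g.\ $\theta_h \leftrightarrow h^{-1}(2h(\theta))$ when passing through $\Ae$). One small point worth flagging: to invoke Lemma~\ref{lem:equiv_scal_h} for $h_1$ one should note that $h_1$ is unbounded and strictly decreasing, which follows exactly as in the proof of Theorem~\ref{thm:equiv} (if $h_1$ were bounded then $h\leq c_8 h_1$ would force $h$ bounded, contradicting the standing assumption $h(0^+)=\infty$). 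With that remark in place the rest is routine, and the statement about $T_i=\infty$ propagating to $T_8=\infty$ follows from the corresponding clause in Theorem~\ref{thm:equiv} together with the chain of equivalences just described.
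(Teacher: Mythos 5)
Your proposal is correct and in substance matches the paper's proof: both directions rest on the projection formulae, the identity $K_1(1/|x|)=\left<x,Ax\right>+\int_{|\left<x,z\right>|<1}|\left<x,z\right>|^2\LM(dz)$, the comparison \eqref{ineq:comp_TJ}, and Lemma~\ref{lem:equiv_scal_h} applied to $h_1$ after noting $h_1\leq h\leq c_8 h_1$. The only tactical difference is in $\Cc\Rightarrow\Ch$: the paper obtains $h\leq c_8 h_1$ in one line from the identity ${\rm Re}[\LCh_1(x)]={\rm Re}[\LCh(\Pi_1 x)]={\rm Re}[\LCh(x)]$ for $x$ in the projected subspace, giving $c_3\LCh^*(r)\leq {\rm Re}[\LCh_1(rv)]\leq \LCh_1^*(r)$ and then invoking \eqref{ineq:comp_TJ}, whereas you route through $\Cd$ and the bound $K_1\leq h_1$; both are sound and carry the same constant dependencies. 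Your side remark that $h_1$ is unbounded (hence strictly decreasing) because $h\leq c_8 h_1$ and $h(0^+)=\infty$ is the right consistency check before applying Lemma~\ref{lem:equiv_scal_h} to $h_1$.
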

\pf
Note that we always have $h_1\leq h$, since 
$
h_1(r)=r^{-2}\|\Pi_1 A \Pi_1\|+ \int_{\Rd}(1\land \frac{|\Pi_1 z|^2}{r^2})\LM(dz)
$
\cite[Proposition~11.10]{MR1739520}.
We first prove $\Ch\implies \Cc$.
Due to Lemma~\ref{lem:equiv_scal_h}
it suffices to
focus on the first part of $\Cc$.
Let $x\in\Rd$, $x\neq 0$,
and
consider $\Pi_1$ to be a projection on a subspace spanned by $v=x/|x|$.
Since $h$ and $h_1$ are comparable on $r<T_8$
we get
$\Ac$ for $h_1$, which together with
\eqref{ineq:comp_TJ} gives
for $|x|>1/T_8$,
\begin{align*}
\LCh^*(|x|)&\leq 2 h(1/|x|)\leq 2c_8\, h_1(1/|x|)
\leq 2 c_8\, c(\alpha_8,c_8)\, K_1(1/|x|)\\
&= 2 c_8\, c(\alpha_8,c_8)\left( 
\left<x,Ax\right>+\int_{|\left<x,z\right>|<1} |\!\left<x,z\right>\!|^2\,\LM(dz)\right)
\leq 2 c_8 c(\alpha_8,c_8) {\rm Re}[\LCh(x)]\,.
\end{align*}
Thus $\Cc$ holds with $c_3=c_3(d,\alpha_8,c_8)$, $T_3=T_8$, $\alpha_3=\alpha_8$.
Now we establish $\Cc\implies\Ch$.
Let $v\in\Rd$, $|v|=1$, be such that $\Pi_1$ projects on a subspace spanned by $v$.
We denote by $\LCh_1$ the characteristic exponent of $\Pi_1 Y$. Recall that $\LCh_1(z)=\LCh(\Pi_1 z)$.
Then for $r<T_8$ we set $x=rv$ to get
\begin{align*}
c_3 \LCh^*(r)\leq {\rm Re}[\LCh(x)]
= {\rm Re}[\LCh_1(x)]\leq \LCh_1^*(r)\,,
\end{align*}
which by \eqref{ineq:comp_TJ} proves $\Ch$ with
$c_8=c_8(d,c_3)$, $T_8=T_3$ and $\alpha_8=\alpha_3$.
\qed

\subsection{Equivalent conditions - large time}

Our next result
resembles 
Theorem~\ref{thm:equiv}, 
except that here we analyse
the density for large time.
The main difference is 
that in the third and the fourth 
condition below we add a priori
that 
from some point in time onwards
 the characteristic function
is absolutely integrable.

\begin{theorem}\label{thm:equiv-2}
Let $Y$ be a L{\'e}vy process.
The following are equivalent.
\begin{enumerate}
\item[\Da] 
There are $T_1, c_1>0$ such that 
the density $p(t,x)$ of $Y_t$ exists for all $t>T_1$ and
$$
\sup_{x\in\Rd} p(t,x) \leq c_1 \left[h^{-1}(1/t)\right]^{-d}.
$$
\item[\Db] There are $T_2, c_2>0$ such that for all $t>T_2$,
$$
\int_{\Rd} e^{-t\, {\rm Re}[\LCh(z)]} dz \leq c_2 \left[h^{-1}(1/t)\right]^{-d}.
$$
\item[\Dc] 
There are $T_3>0$, $c_3\in (0,1]$ and $\alpha_3\in (0,2]$ such that for all $|x|<1/T_3$,
$$c_3\,  \LCh^*(|x|)\leq {\rm Re}[\LCh(x)]\qquad \mbox{and}\qquad 
\LCh^*(\lambda r) \leq (1/c_3) \lambda^{\alpha_3} \LCh^*(r), \quad \lambda \leq 1,\,r<1/T_3\,.
$$ 
We have  $e^{-t_0 \LCh}\in L^1(\Rd)$ for some $t_0>0$.\\

\item[\Dd] There are  $T_4>0$, $c_4\in [1,\infty)$ such that for all $|x|<1/T_4$,
$$
\LCh^*(|x|)
\leq c_4\left( \left<x,Ax\right>+\int_{|\left<x,z\right>|<1} |\left<x,z\right>|^2\LM(dz)\right) \,.
$$
We have  $e^{-t_0 \LCh}\in L^1(\Rd)$ for some $t_0>0$.
\end{enumerate}
\end{theorem}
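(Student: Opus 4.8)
The plan is to mirror the proof of Theorem~\ref{thm:equiv}, replacing "small time" by "large time" and therefore the scaling behaviour of $h$ at zero by its behaviour at infinity; accordingly Lemma~\ref{lem:equiv_scal_h-2} and Lemma~\ref{lem:2_impl_scal-2} play the roles that Lemma~\ref{lem:equiv_scal_h} and Lemma~\ref{lem:2_impl_scal} played before. The cyclic implications $\Db\Rightarrow\Da$, $\Da\Rightarrow\Db$, $\Db\Rightarrow\Dd$, $\Dd\Rightarrow\Dc$, $\Dc\Rightarrow\Db$ are the natural order. For $\Db\Rightarrow\Da$ we again invert the Fourier transform; absolute integrability of $e^{-t\LCh}$ for large $t$ is automatic from $\Db$, and gives existence and continuity of $p(t,\cdot)$ for $t>T_2$ by Riemann--Lebesgue. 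For $\Da\Rightarrow\Db$ one passes to $Z=Y^1-Y^2$ as before: $p_Z(t,\cdot)=p(t,\cdot)*\widetilde{p(t,\cdot)}$ has characteristic exponent $2\,\mathrm{Re}[\LCh]$, and $p_Z(t,0)=(2\pi)^{-d}\int e^{-2t\,\mathrm{Re}[\LCh(z)]}dz$; bounding $p_Z(t,0)\leq c_1[h^{-1}(1/t)]^{-d}$ and using property~5 of Lemma~\ref{lem:basic_prop_K_h} (that $h^{-1}(2u)\geq h^{-1}(u)/\sqrt{2}$) yields $\Db$ with $T_2=2T_1$, $c_2=c_2(d,c_1)$.

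For $\Db\Rightarrow\Dd$ I would repeat verbatim the projection argument from Theorem~\ref{thm:equiv}: fix $x\neq 0$, set $v=x/|x|$, take $\Pi_1$ the projection onto $\mathrm{span}(v)$ and $\Pi_2$ onto $v^\perp$, use \cite[Proposition~11.10]{MR1739520} for $\LCh_1,K_1,h_1$, the subadditivity of $\sqrt{\mathrm{Re}[\LCh]}$ from \cite[Proposition~7.15]{MR0481057} to split the integral in $\Db$ into a product over the two subspaces, and deduce $h^{-1}(u)\leq c_0\,h_1^{-1}(u)$ for $u<1/T_2$, hence $h(r)\leq c_0^2 h_1(r)$ for $r>h^{-1}(1/T_2)$. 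Now instead of Lemma~\ref{lem:2_impl_scal} one invokes Lemma~\ref{lem:2_impl_scal-2} to get $\Ba$ for $h$ (with $\theta_h=h^{-1}(2/T_2)$), which transfers to $h_1$ by comparability, and then $\Bc$ of Lemma~\ref{lem:equiv_scal_h-2} gives $2h(r)\leq c_4 K_1(r)$ for $r$ large, i.e. $|x|$ small; absolute integrability for some $t_0$ is inherited directly from $\Db$. The implication $\Dd\Rightarrow\Dc$ uses $1-\cos r\geq(1-\cos 1)r^2$ for $|r|<1$ exactly as before to get the pointwise comparison $c_3\LCh^*(|x|)\leq\mathrm{Re}[\LCh(x)]$, and then the scaling of $\LCh^*$ at zero comes from applying Lemma~\ref{lem:equiv_scal_h-2} to one-dimensional projections $h_1$ which, by $\Dd$ and \eqref{ineq:comp_TJ}, satisfy $h_1(r)\leq h(r)\leq c_4\,8(1+2d)K_1(r)\leq c_4\,8(1+2d)h_1(r)$ for $r>T_4$; comparability then pushes $\Ba$ back to $h$, whence the stated weak scaling of $\LCh^*$ at zero; the integrability hypothesis is again carried along unchanged.

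Finally $\Dc\Rightarrow\Db$: from \eqref{ineq:comp_TJ} and $\Dc$ we get $\mathrm{Re}[\LCh(x)]\geq c\,[h(1/|x|)-h(1/T_3)]$ with $c=c(d,c_3)\leq 1$, and $\Ba$ holds for $h$ with $\lah=\alpha_3$, $\theta_h=1/T_3$, $c_h=c_3/c_d$ by Lemma~\ref{lem:equiv_scal_h-2}. The one point that genuinely needs the a priori integrability assumption is that the integral $\int_{\Rd}e^{-t\,\mathrm{Re}[\LCh(z)]}dz$ is finite at all for large $t$; the scaling of $h(1/\cdot)$ only controls the behaviour near infinity in $z$ (small time), not near the origin $z\to 0$ which is where large $t$ causes trouble, so without the a priori $e^{-t_0\LCh}\in L^1$ one cannot even start. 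Granting that, one splits $\int_{\Rd}e^{-t\,\mathrm{Re}[\LCh]}\leq\int_{|z|<1/h^{-1}(1/t)}dz+e^{\text{const}}\int e^{-ct\,h(1/|z|)}dz$ and applies a version of \cite[Lemma~16]{MR3165234} adapted to weak lower scaling at infinity of $h(1/\cdot)$ (equivalently small-argument scaling), obtaining $\int_{\Rd}e^{-t\,\mathrm{Re}[\LCh(z)]}dz\leq c_2[h^{-1}(1/t)]^{-d}$ for $t$ large, so $\Db$ holds with $c_2=c_2(d,\alpha_3,c_3)$ and $T_2=\max\{t_0,1/h(1/T_3)\}$. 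The main obstacle, as indicated, is exactly this last step: reconciling the required large-$t$ integrability with the fact that scaling of $h$ at infinity says nothing about $z$ near $0$, which is why the a priori clause is built into $\Dc$ and $\Dd$ and must be threaded carefully through the whole cycle.
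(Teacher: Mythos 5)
Your overall plan --- mirror the proof of Theorem~\ref{thm:equiv} using Lemmas~\ref{lem:equiv_scal_h-2} and~\ref{lem:2_impl_scal-2} in place of their small-time counterparts, and thread the a priori integrability through the cycle --- is exactly what the paper does for the first four implications, and the chain $\Db\Rightarrow\Da\Rightarrow\Db\Rightarrow\Dd\Rightarrow\Dc\Rightarrow\Db$ is the right order. Two points need correction, one minor and one substantive.

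The minor one: in $\Da\Rightarrow\Db$ you claim that property~5 of Lemma~\ref{lem:basic_prop_K_h} (``$h^{-1}(2u)\geq h^{-1}(u)/\sqrt2$'') closes the argument. That inequality is reversed relative to what property~5 actually says, namely $\sqrt\lambda\,h^{-1}(\lambda u)\leq h^{-1}(u)$. More importantly, what you actually need at that point is an \emph{upper} bound on $h^{-1}(1/s)$ by $h^{-1}(2/s)$, i.e.\ the lower scaling of $h^{-1}$ encoded in~$\Bb$, which is not a free property of $h$ and must come from Lemma~\ref{lem:2_impl_scal-2} applied to the hypothesis; property~5 gives only the trivial direction. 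This is exactly the role of Lemma~\ref{lem:2_impl_scal-2} and is not a shortcut.

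The substantive gap is in $\Dc\Rightarrow\Db$. Your split
\begin{align*}
\int_{\Rd}e^{-t\,\mathrm{Re}[\LCh(z)]}\,dz
\leq \int_{|z|<1/h^{-1}(1/t)}dz
+ e^{\mathrm{const}}\int e^{-ct\,h(1/|z|)}\,dz
\end{align*}
uses the pointwise lower bound $\mathrm{Re}[\LCh(z)]\geq c\bigl(h(1/|z|)-h(T_3)\bigr)$ over the whole outer region, but $\Dc$ supplies that bound only for $|z|<1/T_3$. Since $1/h^{-1}(1/t)\to 0$ as $t\to\infty$, your outer region eventually contains all of $\{|z|\geq 1/T_3\}$, where you have no pointwise control at all, and the inequality as written simply fails there. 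You also have the direction of the difficulty reversed: the scaling at infinity in the argument of $h$ \emph{does} control $h(1/|z|)$ near $z=0$ (small $|z|$ corresponds to large $1/|z|$), so the dangerous region is not $z\to 0$ but $|z|\geq 1/T_3$. The a priori assumption $e^{-t_0\LCh}\in L^1(\Rd)$ is needed precisely to deal with that region, and not merely to know the integral is finite: one splits at $|z|=1/T_3$, handles $|z|<1/T_3$ via the scaling as you describe (the paper does this through the auxiliary $\tilde h$ and \cite[Lemma~16]{MR3165234}), and bounds
\begin{align*}
\int_{|z|\geq 1/T_3}e^{-t\,\mathrm{Re}[\LCh(z)]}\,dz
\leq \Bigl(\,\sup_{|z|\geq 1/T_3}e^{-(t/2)\,\mathrm{Re}[\LCh(z)]}\Bigr)\int_{\Rd}e^{-t_0\,\mathrm{Re}[\LCh(z)]}\,dz
\end{align*}
for $t>2t_0$. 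For the supremum to give exponential decay one must additionally know that $\inf_{|z|\geq 1/T_3}\mathrm{Re}[\LCh(z)]>0$, and this too comes from the a priori integrability: by Riemann--Lebesgue $e^{-t_0\LCh}\in C_0(\Rd)$, so $\mathrm{Re}[\LCh(z)]\to\infty$ as $|z|\to\infty$, which forces $\mathrm{Re}[\LCh(z)]\neq 0$ for $z\neq 0$ and hence positivity of the infimum by continuity. Finally the resulting $c_0^t$ with $c_0\in(0,1)$ is absorbed into $[h^{-1}(1/t)]^{-d}$ using~$\Bb$. This second use of the a priori hypothesis --- to produce the spectral gap away from the origin, not just finiteness --- is the step your proposal misses.
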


\begin{proof}
$\Db \implies \Da$ is direct. $\Da \implies \Db$ with $c_2=c_2(d,c_1)$ and $T_2=4T_1$,
$\Db \implies \Dd$
with $c_4=c_4(d,c_2)$ and $T_4=c(d,c_2) h^{-1}(1/T_2)$,
and $\Dd \implies \Dc$ with $\alpha_3=\alpha_3(d,c_4)$, $c_3=c_3(d,c_4)$ and $T_3=T_4$, 
by proofs similar to that of Theorem~\ref{thm:equiv},
where 
Lemma~\ref{lem:equiv_scal_h}
and~\ref{lem:2_impl_scal} are
replaced by
Lemma~\ref{lem:equiv_scal_h-2}
and~\ref{lem:2_impl_scal-2}.
Details are omitted.
We prove that
$\Dc \implies \Db$. 
By~\eqref{ineq:comp_TJ} and our assumption
there is $c=c(d,c_3)$ such that
\begin{align*}
\int_{\Rd}e^{-t\, {\rm Re}[\LCh(z)]} dz 
\leq
\int_{|z|<1/T_3} e^{-ct\, h(1/|z|)} dz 
+\int_{|z|\geq 1/T_3}e^{-t\, {\rm Re}[\LCh(z)]} dz =: I_1+I_2\,.
\end{align*}
Now, define
$$
\tilde{h}(r)=
\begin{cases}
r^{-\alpha_3} T_3^{\alpha_3} h(T_3)\, \quad &r\leq T_3\,,\\
h(r) &r>T_3\,.
\end{cases}
$$
It's not hard to verify that the function $f(r)=\tilde{h}(1/r)$
satisfies ${\rm WLSC}(\alpha_3,0,c_3/c_d)$ and therefore by
\cite[Lemma~16]{MR3165234},
\begin{align*}
I_1\leq \int_{\Rd} e^{-ct \, f(|z|)}\,dz \leq \tilde{c} \big[ f^{-1}(1/t)\big]^{d}= \tilde{c} \big[\tilde{h}^{-1}(1/t) \big]^{-d}
=\tilde{c} \big[h^{-1}(1/t) \big]^{-d}\,,\qquad t>1/h(T_3)\,.
\end{align*}
Next, for $t>2t_0$ we have
\begin{align*}
I_2=\int_{|z|\geq 1/T_3}e^{-t\, {\rm Re}[\LCh(z)]} dz
\leq  \inf_{|z|\geq 1/T_3}\left(  e^{-(t/2)  {\rm Re}[\LCh(z)]}\right) \int_{\Rd} e^{-t_0\, {\rm Re}[\LCh(z)]}\,dz\,.
\end{align*}
Since $e^{-t_0 \LCh}\in L^1(\Rd)$, then $p(t_0,x)$ exists. Thus 
by Riemann-Lebesgue lemma $e^{-t_0 \LCh}\in C_0(\Rd)$. In particular,
$\lim_{|x|\to \infty} {\rm Re}[\LCh(x)]=\infty$.
The latter implies that ${\rm Re}[\LCh(x)]\neq 0$ if $x\neq 0$
(otherwise we would have ${\rm Re}[\LCh(k x)]=0$ for some $x\neq 0$ and all $k\in\N$).
Then by continuity of $\LCh(x)$,
$$
\inf_{|z|\geq 1/T_3} \left(  e^{-(t/2)  {\rm Re}[\LCh(z)]}\right)
= \left(  e^{- \inf_{|z|\geq 1/T_3} (1/2){\rm Re}[\LCh(z)]}\right)^{t}
=c_0^t\,,\quad \mbox{where}\quad c_0\in(0,1)\,.$$
Finally, $c_0^t$ is bouded up to multiplicative constant by $[h^{-1}(1/t)]^{-d}$ (see $\Bb$). 
This ends the proof.
\end{proof}

\section{Decomposition}\label{sec:decomposition}

Let $Y$ be a L{\'e}vy process in $\Rd$
with a generating triplet $(0,\LM,\drf)$
and assume that $\Cc$ holds.
The aim of this section is to 
decompose
$Y$ into 
$Z^{1.\lambda}$ and $Z^{2.\lambda}$
is such a way that it can be used to investigate 
its density.
The idea is to some extent it is  motivated by \cite{MR1486930}.
We introduce an auxiliary L{\'e}vy measure $\nu$
satisfying for some
 $a_1\in(0,1]$, 
$$
a_1 \,\nu (dx) \leq \LM(dx)\,,
$$
and for some $a_2 \in [1,\infty)$ and all $|x|>1/T_3$,
$$
{\rm Re}[\LCh (x)] \leq a_2 \,{\rm Re}[\LCh_\nu (x)]\,. 
$$
Here
$\LCh_{\nu}$ corresponds to $(0,\nu,0)$.
We similarly write $h_{\nu}$.
For $\lambda>0$ consider the following L{\'e}vy measures
$$
\LM_{1.\lambda}(dx):=\LM(dx)-\frac{a_1}{2} \nu |_{B_{\lambda}} (dx)\,,\qquad \LM_{2.\lambda}(dx):=\frac{a_1}{2}\nu |_{B_{\lambda}} (dx)\,.
$$
We let $Z^{1.\lambda}$ and $Z^{2.\lambda}$ 
be L{\'e}vy processes with generating triplets $(0,\LM_{1.\lambda},\drf)$
and $(0,\LM_{2.\lambda},0)$, respectively. 
By analogy we write $\LCh_{1.\lambda}$, $h_{1.\lambda}$, $p_{1.\lambda}$,
$\drf^{1.\lambda}_{r}$ and $\LCh_{2.\lambda}$, $h_{2.\lambda}$, $p_{2.\lambda}$, $\drf^{2.\lambda}_{r}$. 
We collect technical inequalities that will be used without further comment.

\noindent
\begin{remark}

\noindent
(i)
 For $x\in\Rd$
$$
\frac{a_1}{2} {\rm Re}[\LCh_{\nu}(x)] \leq \frac{1}{2} {\rm Re}[\LCh(x)] \leq {\rm Re}[\LCh_{1.\lambda}(x)] \leq {\rm Re}[\LCh(x)]\,.
$$

\noindent
(ii)
For $|x|>1/T_3 $ we get
$$
a_1 \LCh_{\nu}^*(|x|)\leq \LCh^*(|x|)\leq (1/c_3) {\rm Re}[\LCh(x)] \leq (a_2/c_3) 
{\rm Re}[\LCh_\nu(x)]\leq   (a_2/c_3)\LCh_\nu^*(|x|)\,.
$$

\noindent
(iii) The characteristic exponent $\LCh_\nu$ satisfies $\Cc$ with $T_\nu=T_3$, $c_\nu=(c_3^2 a_1)/a_2 $ and $\alpha_\nu=\alpha_3$.

\noindent
(iv)
For $r>0$
$$
a_1 h_\nu(r)\leq h(r)\,,
%\qquad r>0\,,
$$
and for $r<T_3$
$$
h(r)\leq a_2 c_d h_\nu(r)\,,
%\qquad r<T_3\,,
$$
holds with $c_d=16(1+2d)$ by \eqref{ineq:comp_TJ}.

\end{remark}

The first result resembles
in its formulation and in the proof Lemma~\ref{lem:CI}
applied to $Z^{1.\lambda}$,
 but
it is tuned to a new approach 
and  involves auxiliary objects like
$h_\nu$.

\begin{lemma}\label{lem:first_1}
There are constants
$a_0=a_0(d,\alpha_3,c_3,a_2)\geq 1$ and
$c_{p_1}=c_{p_1}(d,\alpha_3,c_3,a_1,a_2)$
such that
for every $\lambda:=a_0 h_\nu^{-1}(1/t)<T_3$
there exists 
$|\bar{x}_t|\leq \lambda$
for which
\begin{align*}
\inf_{|y|\leq c_{p_1} \lambda } \big[ p_{1.\lambda}(t, y+ \bar{x}_t+ t \drf^{1.\lambda}_{\lambda})\big] \geq 1/(4\omega_d)\, \lambda^{-d} \,.
\end{align*}
\end{lemma}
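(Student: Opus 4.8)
The plan is to run the argument of Lemma~\ref{lem:CI} for the process $Z^{1.\lambda}$, taking care that every constant is independent of $\lambda$ and that the natural scale appearing in the estimate is $h_\nu^{-1}(1/t)$ rather than $h_{1.\lambda}^{-1}(1/t)$; this is precisely where the auxiliary function $h_\nu$ and the comparisons collected in the Remark enter. I would first record that $p_{1.\lambda}(t,\cdot)$ exists and lies in $C_0(\Rd)$: by part~(i) of the Remark ${\rm Re}[\LCh_{1.\lambda}]\geq(a_1/2)\,{\rm Re}[\LCh_\nu]$, and since $\LCh_\nu$ satisfies $\Cc$ (part~(iii)) its real part grows at least polynomially away from the origin, so $e^{-t\LCh_{1.\lambda}}\in L^1(\Rd)$ for every $t>0$ and the Riemann--Lebesgue lemma applies.

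Next I would prove the tail bound. Pruitt's inequality \cite{MR632968} applied to $Z^{1.\lambda}$, whose triplet is $(0,\LM_{1.\lambda},\drf)$, gives, exactly as in Lemma~\ref{lem:CI} and with the drift correction cancelling by \eqref{def:br} applied to $\LM_{1.\lambda}$, that $\PP(|Z^{1.\lambda}_t-t\drf^{1.\lambda}_r|\geq r)\leq c(d)\,t\,h_{1.\lambda}(r)$. Setting $r=\lambda=a_0 h_\nu^{-1}(1/t)$, using $h_{1.\lambda}\leq h$ (since $\LM_{1.\lambda}\leq\LM$), then the scaling $\Aa$ of $h$ — valid with exponent $\alpha_3$ and $\theta_h=T_3$ because $\Cc$ is assumed — with scaling parameter $1/a_0\leq 1$, and finally part~(iv) of the Remark, I obtain
\begin{align*}
t\,h_{1.\lambda}(\lambda)&\leq t\,h(\lambda)\leq C_h\,a_0^{-\alpha_3}\,t\,h\big(h_\nu^{-1}(1/t)\big)\\
&\leq C_h\,a_0^{-\alpha_3}\,a_2 c_d\,t\,h_\nu\big(h_\nu^{-1}(1/t)\big)= C_h\,a_2\,c_d\,a_0^{-\alpha_3}\,,
\end{align*}
where $C_h=C_h(d,\alpha_3,c_3)$ and $c_d=16(1+2d)$; this is licit because $\lambda<T_3$ forces $h_\nu^{-1}(1/t)<T_3$. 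Choosing $a_0=a_0(d,\alpha_3,c_3,a_2)\geq 1$ so that $c(d)$ times the right-hand side is at most $1/2$ gives $\PP(|Z^{1.\lambda}_t-t\drf^{1.\lambda}_\lambda|\geq\lambda)\leq 1/2$. Integrating $p_{1.\lambda}$ over the complementary event as in \eqref{step:sym_rem} and using continuity, there is $|\bar{x}_t|\leq\lambda$ with $p_{1.\lambda}(t,\bar{x}_t+t\drf^{1.\lambda}_\lambda)\geq 1/(2\omega_d)\,\lambda^{-d}$.

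Finally I would spread this over a ball by a gradient estimate. From the Fourier representation,
$$
\sup_{x\in\Rd}|\nabla_x p_{1.\lambda}(t,x)|\leq(2\pi)^{-d}\int_{\Rd}|z|\,e^{-t\,{\rm Re}[\LCh_{1.\lambda}(z)]}\,dz\leq(2\pi)^{-d}\int_{\Rd}|z|\,e^{-(a_1 t/2)\,{\rm Re}[\LCh_\nu(z)]}\,dz\,.
$$
Since $\LCh_\nu$ satisfies $\Cc$, condition $\Ce$ holds for $\LCh_\nu$ (with $m=1$); applying it at time $a_1 t/2$ — again within its range, as $1/t>h_\nu(T_3)$ — and then the scaling $\Ab$ of $h_\nu^{-1}$ with parameter $2/a_1\geq 1$ to compare $h_\nu^{-1}(2/(a_1 t))$ with $h_\nu^{-1}(1/t)=\lambda/a_0$, I reach $\sup_{x}|\nabla_x p_{1.\lambda}(t,x)|\leq C_\nabla\,\lambda^{-d-1}$ with $C_\nabla=C_\nabla(d,\alpha_3,c_3,a_1,a_2)$. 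Hence, for $|y|\leq\big(1/(4\omega_d C_\nabla)\big)\lambda=:c_{p_1}\lambda$,
$$
p_{1.\lambda}(t,y+\bar{x}_t+t\drf^{1.\lambda}_\lambda)\geq p_{1.\lambda}(t,\bar{x}_t+t\drf^{1.\lambda}_\lambda)-|y|\,C_\nabla\,\lambda^{-d-1}\geq\frac{1}{4\omega_d}\,\lambda^{-d}\,,
$$
which is the claim.

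The argument is essentially bookkeeping rather than conceptual; the one point requiring genuine care is the dependence of the constants. One must scale $h$ itself (not $h_\nu$) in the tail bound and invoke part~(iv) of the Remark only at the last step, so that $a_0$ depends on $d,\alpha_3,c_3,a_2$ but not on $a_1$, while $a_1$ is allowed to enter $c_{p_1}$. Checking that every scaling step for $h$, $h_\nu$ and $h_\nu^{-1}$, and both invocations of the conditions for $\LCh_\nu$, stay within their admissible ranges — all consequences of the single fact that $\lambda<T_3$ implies $h_\nu^{-1}(1/t)<T_3$, hence $1/t>h_\nu(T_3)$ — is the main thing to verify.
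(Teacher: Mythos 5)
Your proof is correct and follows essentially the same four-step strategy as the paper: Pruitt's maximal inequality to pick $a_0$ so that the tail probability is at most $1/2$, continuity to locate $\bar{x}_t$, the moment bound $\Ce$ for $\LCh_\nu$ (with $m=1$) passed through ${\rm Re}[\LCh_{1.\lambda}]\geq(a_1/2){\rm Re}[\LCh_\nu]$ to control the gradient, and then a mean-value spread; the careful separation ensuring $a_0$ is independent of $a_1$ while $c_{p_1}$ may depend on it matches the paper's treatment exactly.
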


\pf
\noindent
{\it Step 1.} 
There is a constant $a_0=a_0(d,\alpha_3,c_3,a_2)\geq 1$ such that
for 
$\lambda:=a_0 h_\nu^{-1}(1/t)<T_3$,
$$
\PP(|Z^{1.\lambda}_t- t \drf^{1.\lambda}_{\lambda}|\geq \lambda)\leq 1/2\,.
$$
Indeed, by 
\cite[page~954]{MR632968}
there is $c=c(d)$ such that for 
$r=\lambda$,
\begin{align*}
\PP(|Z_t^{1.\lambda}- t \drf^{1.\lambda}_{\lambda}|\geq r) 
&\leq c t  \left( r^{-1} \left|(\drf- \drf^{1.\lambda}_{\lambda})+\int_{\Rd} z \left(\ind_{|z|<r} - \ind_{|z|<1}\right)\LM_{1.\lambda}(dz)\right|  +h_{1.\lambda}(r)\right)\\
&= ct  h_{1.\lambda}(r)
\leq ct h(r)\,.
\end{align*}
Applying Lemma~\ref{lem:equiv_scal_h} we get
$$
h(r)\leq (c_d/c_3) a_0^{-\alpha_3} h(r /a_0)\leq a_2(c_d/c_3)^2 a_0^{-\alpha_3} h_\nu(r /a_0)=a_2(c_d/c_3)^2 a_0^{-\alpha_3} t^{-1}\,.
$$
Now, the inequality follows with $a_0=(2 c a_2 (c_d/c_3)^2 )^{1/\alpha_3}$.

\noindent
{\it Step 2.}
We note that for 
$\lambda<T_3$
there exists 
$|\bar{x}_t|\leq \lambda$ such that
\begin{align*}
p_{1.\lambda}(t, \bar{x}_t+ t \drf^{1.\lambda}_{\lambda}) \geq 1/(2\omega_d)\, \lambda^{-d}\,.
\end{align*}
It clearly follows from the continuity of $p_{1.\lambda}$ and
\begin{align*}
1/2 &\leq 1-\PP(|Z_t^{1.\lambda}- t \drf^{1.\lambda}_{\lambda}|\geq \lambda) 
= \int_{|x- t \drf^{1.\lambda}_{\lambda}|< \lambda } p_{1.\lambda}(t,x)\,dx
\leq  
\omega_d \,
\lambda^d
 \sup_{|x|< \lambda } \big[ p_{1.\lambda}(t,x+ t \drf^{1.\lambda}_{\lambda}) \big]\,.
\end{align*}

\noindent
{\it Step 3.}
We claim that
there exists a constant $c_{st3}=c_{st3}(d,\alpha_3,c_3,a_1,a_2)$
such that for every $t<1/h_\nu(T_3)$ 
%and $\lambda>0$ 
we have
\begin{align*}
\sup_{x\in\Rd} |\nabla_x p_{1.\lambda}(t, x)| \leq c_{st3} /(2\omega_d) \,  \lambda^{-d-1} \,. %\left[h_s^{-1}(1/t)\right]^{-d-1}\,.
\end{align*}
Since
$\LCh_\nu$ satisfies $\Cc$,
by $\Ce$ there is
$c_\nu'=c_\nu'(d,\alpha_\nu,c_\nu)$
such that for every $t<1/h_\nu(T_\nu)$,
\begin{align*}
\int_{\Rd} |z| e^{-t\, {\rm Re}[\LCh_{1.\lambda}(z)]}\, dz 
&\leq \int_{\Rd} |z| e^{-(a_1/2)t\, {\rm Re}[\LCh_\nu(z)]}\, dz 
\leq c_\nu'\left[h_\nu^{-1}(2/(a_1 t))\right]^{-d-1}\\
&\leq c_\nu'\left[ (a_1 c_\nu/(2c_d))^{1/\alpha_\nu} h_\nu^{-1}(1/t)\right]^{-d-1}\,.
\end{align*}
The last inequality follows from Lemma~\ref{lem:equiv_scal_h}.

\noindent
{\it Step 4.}
The statement of the lemma now follows. 
Indeed, by {\it Step 2.} and {\it Step 3.}  we have for every $|y|\leq 1/(2 c_{st3}) \,\lambda$,
$$
p_{1.\lambda}(t,y+ \bar{x}_t+ t \drf^{1.\lambda}_{\lambda})\geq p_{1.\lambda}(t,\bar{x}_t+ t \drf^{1.\lambda}_{\lambda})-|y| \sup_{x\in\Rd} |\nabla_x p_{1.\lambda}(t,x)|\geq 1/(4 \omega_d)\, \lambda^{-d} \,. % \left[ a_0  h_s^{-1}(1/t)\right]^{-d}\,.
$$
\qed

In what follows  we study $Z^{2.\lambda}$.

\begin{lemma}\label{lem:first_2}
Let $a_0$ be like in Lemma~\ref{lem:first_1}.
There is a constant $c_{p_2}=c_{p_2}(d,\alpha_3,c_3,a_1,a_2)\geq 1$ such that
for every $\lambda:=a_0 h_\nu^{-1}(1/t)<T_3$
and
$|x|\geq c_{p_2}\lambda^{-1}$,
$$
{\rm Re}[\LCh_\nu(x)] \leq c_{p_2} {\rm Re}[\LCh_{2.\lambda}(x)]\,.
$$
Further, $\LCh_{2.\lambda}$ satisfies $\Cc$ with $T=c_{p_2}\lambda^{-1}$, $c=c(c_3,a_2)$ and $\alpha=\alpha_3$.
\end{lemma}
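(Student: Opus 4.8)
The plan is to establish the two assertions of Lemma~\ref{lem:first_2} separately, both resting on comparing $\LCh_{2.\lambda}$ with $\LCh_\nu$ on the appropriate range of frequencies. Recall that $\LM_{2.\lambda}(dx)=\tfrac{a_1}{2}\nu|_{B_\lambda}(dx)$, so that for every $x$,
\begin{align*}
{\rm Re}[\LCh_\nu(x)]
&= \int_{\Rd} (1-\cos\langle x,z\rangle)\,\nu(dz)\\
&= \frac{2}{a_1}{\rm Re}[\LCh_{2.\lambda}(x)] + \int_{|z|\geq \lambda}(1-\cos\langle x,z\rangle)\,\nu(dz)\,.
\end{align*}
Hence it suffices to dominate the tail integral $\int_{|z|\geq\lambda}(1-\cos\langle x,z\rangle)\,\nu(dz)$, which is at most $2\,\nu(B_\lambda^c)$, by a constant multiple of ${\rm Re}[\LCh_{2.\lambda}(x)]$ whenever $|x|\geq c_{p_2}\lambda^{-1}$.

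For the first assertion I would argue as follows. From Lemma~\ref{lem:basic_prop_K_h}(6) applied to $\nu$ we have $\nu(B_\lambda^c)\leq h_\nu(\lambda)$, and since $\lambda=a_0 h_\nu^{-1}(1/t)$, property $\Aa$ for $h_\nu$ (which holds because $\LCh_\nu$ satisfies $\Cc$, see Remark~(iii) and Lemma~\ref{lem:equiv_scal_h}) gives $h_\nu(\lambda)\leq C\, a_0^{-\alpha_\nu} h_\nu(h_\nu^{-1}(1/t))=C a_0^{-\alpha_\nu}/t$; so the tail is controlled by $c/t$. On the other hand, for the lower bound on ${\rm Re}[\LCh_{2.\lambda}(x)]$ one uses that $\LCh_{2.\lambda}$ has jump measure $\tfrac{a_1}{2}\nu|_{B_\lambda}$ with $h_{2.\lambda}$ comparable (up to constants depending on $a_1$) to $h_\nu$ on the scale $r\leq\lambda$, together with \eqref{ineq:comp_TJ}: for $|x|\geq c_{p_2}/\lambda$ one has ${\rm Re}[\LCh_{2.\lambda}(x)]\geq \tfrac{1}{8(1+2d)}h_{2.\lambda}(1/|x|)\geq c\, h_\nu(1/|x|)$, and this is $\geq c\, h_\nu(\lambda)\cdot(\text{scaling factor})$, which for $|x|\geq c_{p_2}/\lambda$ and $c_{p_2}$ large enough dominates $c/t$ by the weak scaling $\Aa$ of $h_\nu$. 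Choosing $c_{p_2}$ so that the scaling gain beats the constant from the tail estimate closes the inequality; tracking dependencies shows $c_{p_2}=c_{p_2}(d,\alpha_3,c_3,a_1,a_2)$.

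For the second assertion — that $\LCh_{2.\lambda}$ satisfies $\Cc$ with $T=c_{p_2}\lambda^{-1}$, $c=c(c_3,a_2)$ and $\alpha=\alpha_3$ — I would combine the first assertion with the fact that $\LCh_\nu$ already satisfies $\Cc$. The first part of $\Cc$, namely $c\,\LCh_{2.\lambda}^*(|x|)\leq {\rm Re}[\LCh_{2.\lambda}(x)]$, follows because for $|x|\geq c_{p_2}/\lambda$ we have $\LCh_{2.\lambda}^*(|x|)\leq 2h_{2.\lambda}(1/|x|)\leq c\, h_\nu(1/|x|)\leq c\, \LCh_\nu^*(|x|)\leq (c/c_\nu){\rm Re}[\LCh_\nu(x)]\leq (c c_{p_2}/c_\nu){\rm Re}[\LCh_{2.\lambda}(x)]$ by the first assertion and Remark~(iii), while the scaling part of $\Cc$ for $\LCh_{2.\lambda}^*$ is inherited from that of $\LCh_\nu^*$ (equivalently of $h_\nu$) because $h_{2.\lambda}$ and $h_\nu$ are comparable for $r$ below the scale $\lambda$, and $\Aa$ transfers through comparability by Lemma~\ref{lem:equiv_scal_h}; one then reads off the scaling exponent $\alpha_3$ and the threshold $c_{p_2}\lambda^{-1}$.

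The main obstacle is the first assertion, specifically making the choice of $c_{p_2}$ uniform in $t$ (equivalently in $\lambda$): one must ensure that the scaling gain in $h_\nu(1/|x|)/h_\nu(\lambda)$ obtained for $|x|\geq c_{p_2}/\lambda$ — which by $\Aa$ is of order $c_{p_2}^{\alpha_\nu}$ — genuinely dominates the constant $C a_0^{-\alpha_\nu}$ coming from the tail bound, and that the comparison constant between $h_{2.\lambda}$ and $h_\nu$ (depending on $a_1$) does not drift with $\lambda$. This is where one must be careful that all the scaling constants invoked are those of $h_\nu$, which are fixed once $\Cc$ is fixed, rather than depending on the cutoff $\lambda$; granting that, the argument is a bookkeeping of the constants from Lemmas~\ref{lem:equiv_scal_h} and~\ref{lem:basic_prop_K_h} together with \eqref{ineq:comp_TJ}.
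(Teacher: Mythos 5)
Your decomposition of ${\rm Re}[\LCh_\nu(x)]$ and your bound on the tail integral by $2h_\nu(\lambda)$ agree with the paper. However, there is a genuine gap in your lower bound on ${\rm Re}[\LCh_{2.\lambda}(x)]$. You write that \eqref{ineq:comp_TJ} gives ${\rm Re}[\LCh_{2.\lambda}(x)]\geq \tfrac{1}{8(1+2d)}h_{2.\lambda}(1/|x|)$, but \eqref{ineq:comp_TJ} lower-bounds $\LCh_{2.\lambda}^*(|x|)=\sup_{|z|\leq|x|}{\rm Re}[\LCh_{2.\lambda}(z)]$, not the pointwise value ${\rm Re}[\LCh_{2.\lambda}(x)]$; the latter can be far smaller than the former. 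A comparability ${\rm Re}[\LCh_{2.\lambda}(x)]\approx\LCh_{2.\lambda}^*(|x|)$ is precisely the sectorial half of $\Cc$ for $\LCh_{2.\lambda}$, which is part of the conclusion of the lemma — so this step, as written, is circular. The subsequent reduction to scaling gains in $h_\nu(1/|x|)/h_\nu(\lambda)$ therefore rests on an unproved estimate.

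The paper sidesteps this by never lower-bounding ${\rm Re}[\LCh_{2.\lambda}(x)]$ directly. Instead, using \eqref{ineq:comp_TJ}, the weak lower scaling of $\LCh_\nu^*$, and the sectorial estimate for $\LCh_\nu$ (Remark (ii)/(iii), which is available since it is inherited from $\Cc$ for $\LCh$), one gets for $|x|\geq 1/\lambda$
\begin{align*}
2h_\nu(\lambda)\leq c_d\LCh_\nu^*(1/\lambda)\leq (c_d/c_\nu)(|x|\lambda)^{-\alpha_\nu}\LCh_\nu^*(|x|)
\leq \frac{a_2 c_d}{a_1 c_3 c_\nu}(|x|\lambda)^{-\alpha_\nu}\,{\rm Re}[\LCh_\nu(x)]\,,
\end{align*}
so that choosing $c_{p_2}$ large enough forces $2h_\nu(\lambda)\leq(1/2){\rm Re}[\LCh_\nu(x)]$ whenever $|x|\geq c_{p_2}/\lambda$. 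Substituting this into your first display and absorbing the $(1/2){\rm Re}[\LCh_\nu(x)]$ term into the left-hand side yields ${\rm Re}[\LCh_\nu(x)]\leq(4/a_1){\rm Re}[\LCh_{2.\lambda}(x)]$. This is the crucial move your proposal is missing, and it also makes the tracking of constants transparent. Once the first assertion is secured, the second follows (as you outline) because ${\rm Re}[\LCh_{2.\lambda}]$ and ${\rm Re}[\LCh_\nu]$ are now comparable on $|x|\geq c_{p_2}/\lambda$, so the sectoriality and scaling of $\LCh_\nu^*$ transfer to $\LCh_{2.\lambda}^*$ (since $\LCh_{2.\lambda}^*\leq(2/a_1)\LCh_\nu^*$ always, and the reverse on the indicated range).
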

\pf
{\it Step 5.}
We observe that
\begin{align*}
{\rm Re}[\LCh_\nu(x)]
&=(2/a_1) {\rm Re}[\LCh_{2.\lambda}(x)]
+\int_{|z|\geq \lambda}\big( 1-\cos(\left<x,z\right>) \big) \nu(dz)\\
&\leq (2/a_1) {\rm Re}[\LCh_{2.\lambda}(x)]+2 h_\nu(\lambda)\,.
%\leq (2/a_1) {\rm Re}[\LCh_{2.\lambda}(x)]+c_d \LCh^*_s(1/\lambda) \,.
\end{align*}
Using 
\eqref{ineq:comp_TJ} and ${\rm WLSC}$ of $\LCh_\nu^*$, for $|x| \geq  1/\lambda>1/T_\nu$ we have
\begin{align*}
2 h_\nu(\lambda)\leq c_d \LCh_\nu^*(1/\lambda)\leq (c_d/c_\nu) (|x|\lambda)^{-\alpha_\nu} \LCh^*_\nu(|x|)
\leq \left( \frac{a_2 c_d}{a_1 c_3c_\nu}\right) (|x|\lambda)^{-\alpha_\nu} 
{\rm Re}[\LCh_\nu(x)]\,.
\end{align*}
Finally, we choose $c_{p_2}$ such that
$2 h_\nu(\lambda)\leq (1/2){\rm Re}[\LCh_\nu(x)]$.
The last sentence follows from
the comparability of ${\rm Re}[\LCh_\nu(x)]$ and ${\rm Re}[\LCh_{2.\lambda}(x)]$.
\qed

In the next result we put $Z^{1.\lambda}$ and $Z^{2.\lambda}$ together to obtain estimates for the process $Y$.
Given $T\in (0,\infty]$, $a, r>0$ consider
a family of infinitely divisible probability measures,
\begin{align}\label{def:class_distr}
\mathcal{X}(T,a,r):=\{\mu\colon \mu \mbox{ is the distribution of } (Z^{2.\lambda}_t-t\drf^{2.\lambda}_\lambda) /\lambda+y \mbox{ for}&\\
 \mbox{some }
 \lambda:=a\, h_\nu^{-1}(1/t)<T
\mbox{ and } |y| \leq r &\}\,.\nonumber
\end{align}
We note that $\mathcal{X}$ is completely described by the choice of $(T,a, r)$ and $a_1, \nu$.

\begin{proposition}\label{prop:aux_lower}
Let $a_0$, $c_{p_1}$ and $\lambda$ be like in Lemma~\ref{lem:first_1}.
Take $\theta_1,\theta_2>0$
and $r_0=1+\theta_1+\theta_2$.
For all $t<1/h_\nu(T_3/a_0)$
and
$|x|\leq \theta_1 h_{\nu}^{-1}(1/t)$,
\begin{align*}
p(t,x+\Theta_t)
\geq 1/(4\omega_d) \left[ a_0  h_\nu^{-1}(1/t)\right]^{-d} \inf_{\mu \in \mathcal{X}(T_3,a_0,r_0)} \mu(B_{c_{p_1}}) \,,
\end{align*}
whenever
$\Theta_t\in\Rd$
satisfies
$|t\drf_\lambda-\Theta_t|\leq \theta_2 \lambda$
for $\lambda<T_3$.
\end{proposition}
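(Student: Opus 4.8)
The plan is to exploit the independence decomposition $Y_t\stackrel{d}{=}Z^{1.\lambda}_t+Z^{2.\lambda}_t$, which is valid because $\LM=\LM_{1.\lambda}+\LM_{2.\lambda}$ and the drift vectors of $Z^{1.\lambda}$ and $Z^{2.\lambda}$ sum to $\drf$; splitting $\LM$ in \eqref{def:br} likewise gives the elementary identity $\drf_r=\drf^{1.\lambda}_r+\drf^{2.\lambda}_r$ for every $r>0$. Since $Z^{1.\lambda}_t$ has the density $p_{1.\lambda}(t,\cdot)$, the law of $Y_t$ is absolutely continuous with
\[
p(t,z)=\int_{\Rd}p_{1.\lambda}(t,z-w)\,P^{2.\lambda}_t(dw)\,,
\]
where $P^{2.\lambda}_t$ is the (in general non-absolutely-continuous, compound Poisson) law of $Z^{2.\lambda}_t$.

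Next I would substitute $w=t\drf^{2.\lambda}_\lambda+\lambda v$, turning $P^{2.\lambda}_t(dw)$ into $\mu_0(dv)$, where $\mu_0$ is the law of $(Z^{2.\lambda}_t-t\drf^{2.\lambda}_\lambda)/\lambda$. Writing $\Theta_t=t\drf_\lambda+\eta$ with $|\eta|\le\theta_2\lambda$ and using $\drf_\lambda=\drf^{1.\lambda}_\lambda+\drf^{2.\lambda}_\lambda$, a direct computation shows that with the point $\bar{x}_t$ from Lemma~\ref{lem:first_1},
\[
x+\Theta_t-t\drf^{2.\lambda}_\lambda-\lambda v=\lambda(y_0-v)+\bar{x}_t+t\drf^{1.\lambda}_\lambda\,,\qquad y_0:=\frac{x+\eta-\bar{x}_t}{\lambda}\,.
\]
Since $a_0\ge1$ we have $|x|\le\theta_1 h_\nu^{-1}(1/t)\le\theta_1\lambda$ and $|\bar{x}_t|\le\lambda$, hence $|y_0|\le 1+\theta_1+\theta_2=r_0$. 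Restricting the integral to $\{v\colon|v-y_0|\le c_{p_1}\}$, on which $|\lambda(y_0-v)|\le c_{p_1}\lambda$, Lemma~\ref{lem:first_1} applies (the hypothesis $\lambda<T_3$ being exactly $t<1/h_\nu(T_3/a_0)$, by strict monotonicity of $h_\nu$) and gives $p_{1.\lambda}(t,\cdot)\ge\tfrac1{4\omega_d}\lambda^{-d}$ on that set, so
\[
p(t,x+\Theta_t)\ge\frac1{4\omega_d}\lambda^{-d}\,\mu_0\big(\{v\colon|v-y_0|\le c_{p_1}\}\big)\,.
\]

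Finally I would observe that $\mu_0(\{v\colon|v-y_0|\le c_{p_1}\})$ equals $\mu(B_{c_{p_1}})$ for the law $\mu$ of $(Z^{2.\lambda}_t-t\drf^{2.\lambda}_\lambda)/\lambda-y_0$, and since $|y_0|\le r_0$ and $\lambda=a_0 h_\nu^{-1}(1/t)<T_3$ this $\mu$ lies in $\mathcal{X}(T_3,a_0,r_0)$ as defined in \eqref{def:class_distr}; bounding below by the infimum over that family and rewriting $\lambda^{-d}=[a_0 h_\nu^{-1}(1/t)]^{-d}$ yields the assertion. I expect the only genuinely delicate point to be the drift bookkeeping — carrying $\drf_\lambda$, $\drf^{1.\lambda}_\lambda$, $\drf^{2.\lambda}_\lambda$ and the shift $\Theta_t$ through the change of variables so that the argument of $p_{1.\lambda}$ lands precisely inside the region controlled by Lemma~\ref{lem:first_1}; the remaining steps are a routine use of the convolution formula and a change of variables.
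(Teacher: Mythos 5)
Your proof is correct and matches the paper's Step~6 almost verbatim; the only difference is cosmetic (you rescale the convolution variable by $\lambda$ at once, the paper keeps the shift $\sigma_t=x-\bar{x}_t-t\drf_\lambda+\Theta_t$ unscaled and divides at the end), and the drift bookkeeping via $\drf_\lambda=\drf^{1.\lambda}_\lambda+\drf^{2.\lambda}_\lambda$ together with the bound $|y_0|\le r_0$ is exactly the paper's estimate $|\sigma_t|\le r_0\lambda$. One small inaccuracy in a parenthetical: $Z^{2.\lambda}$ need not be compound Poisson, since $\nu(B_\lambda)$ can be infinite; this plays no role in the argument, and your choice to integrate against the general law $P^{2.\lambda}_t(dw)$ rather than invoking a density $p_{2.\lambda}$ is, if anything, slightly more careful than the paper's notation.
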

\pf
{\it Step 6.}
Note that
$\LCh=\LCh_{1.\lambda}+\LCh_{2.\lambda}
$
and $b_{\lambda}=b^{1.\lambda}_{\lambda}+b^{2.\lambda}_{\lambda}$.
By Lemma~\ref{lem:first_1}
we have for
$\sigma_t:=x-\bar{x}_t-t \drf_\lambda+\Theta_t$,
\begin{align*}
p(t,x+\Theta_t)
&=\int_{\Rd} p_{1.\lambda}(t,x+\Theta_t-z) p_{2.\lambda}(t,z)\,dz\\
&=\int_{\Rd} p_{1.\lambda}(t,y+\bar{x}_t+ t \drf^{1.\lambda}_{\lambda}) p_{2.\lambda}(t, \sigma_t+t\drf^{2.\lambda}_{\lambda}-y)\,dy\\
&\geq \int_{|y|\leq c_{p_1}\lambda }  1/(4\omega_d)\, \lambda^{-d} \,p_{2.\lambda}(t,\sigma_t+t\drf^{2.\lambda}_{\lambda}-y)\,dy\\
&= 1/(4\omega_d)\, \lambda^{-d}\, \PP(|Z_t^{2.\lambda}-t\drf^{2.\lambda}_{\lambda}-\sigma_t|\leq c_{p_1}\lambda )\,.
\end{align*}
By Lemma~\ref{lem:first_1} and our assumptions $|\sigma_t|\leq r_0 \lambda$.
This ends the proof.
\qed

In comparison to
Lemma~\ref{lem:CI},
Proposition~\ref{prop:aux_lower}
suggests an explicit shift in the space coordinate and
gives a choice of the shift
within certain class
(see also \eqref{ineq:dif_br}).
On the other hand,
it still leaves the 
crucial question of the
positivity of
$\inf_{\mu \in \mathcal{X}(T_3,a_0,r_0)} \mu(B_{c_{p_1}})$
unresolved. 
In the next three lemmas we begin the investigation of 
$\mathcal{X}(T,a,r)$.
The issue of the positivity
is eventually addressed in Section~\ref{sebsec:low_C3}.

\begin{lemma}\label{lem:t_0}
Let $a_0$ be like in Lemma~\ref{lem:first_1}.
Then 
$\mathcal{X}(T_3,a_0,r)$ is tight
for every $r>0$.
\end{lemma}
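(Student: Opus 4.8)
The goal is to show that the family $\mathcal{X}(T_3,a_0,r)$ of distributions of $(Z^{2.\lambda}_t-t\drf^{2.\lambda}_\lambda)/\lambda+y$ (for admissible $\lambda$ and $|y|\le r$) is tight. Tightness of the shifted piece $y$ is trivial since it ranges over the compact ball $\bar{B}_r$, so by standard properties of weak convergence it suffices to prove tightness of the rescaled, centred processes $W^{\lambda}_t:=(Z^{2.\lambda}_t-t\drf^{2.\lambda}_\lambda)/\lambda$ alone. I would control $\mathcal{X}$ through a uniform second-moment (variance) bound: for each admissible pair $(t,\lambda)$, the random variable $Z^{2.\lambda}_t-t\drf^{2.\lambda}_\lambda$ is a centred infinitely divisible vector whose L\'evy measure is $\frac{a_1}{2}\nu|_{B_\lambda}$ truncated appropriately, so its covariance is of order $t\int_{|z|<\lambda}|z|^2\,\nu(dz)$. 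After dividing by $\lambda^2$ this becomes of order $t\,\lambda^{-2}\int_{|z|<\lambda}|z|^2\nu(dz)=t\,K_\nu(\lambda)\le t\,h_\nu(\lambda)$, and by the defining relation $\lambda=a_0 h_\nu^{-1}(1/t)$ together with property~5 of Lemma~\ref{lem:basic_prop_K_h} (namely $\sqrt{\lambda}h^{-1}(\lambda u)\le h^{-1}(u)$, applied to $h_\nu$ with $\lambda=a_0^2$) one gets $h_\nu(\lambda)=h_\nu(a_0 h_\nu^{-1}(1/t))\le a_0^{-2}/t$, hence $t\,h_\nu(\lambda)\le a_0^{-2}$, a bound uniform over the whole family. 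Chebyshev's inequality then gives the equi-tightness.

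\textbf{Key steps in order.} First, reduce to tightness of $\{W^\lambda_t\}$ by absorbing the bounded shift $y$. Second, write out the generating triplet of $Z^{2.\lambda}_t-t\drf^{2.\lambda}_\lambda$: its L\'evy measure is $\frac{a_1}{2}\nu|_{B_\lambda}$, and subtracting $t\drf^{2.\lambda}_\lambda$ recentres so that (for the coordinate indicator cutoff at radius $\lambda$, noting $\lambda$ may be smaller or larger than $1$) the vector has mean that is controlled and, after a routine computation, variance $\le t\cdot\frac{a_1}{2}\int_{|z|<\lambda}|z|^2\,\nu(dz)$. Third, divide by $\lambda^2$ and recognise $\lambda^{-2}\int_{|z|<\lambda}|z|^2\nu(dz)\le K_\nu(\lambda)\le h_\nu(\lambda)$. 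Fourth, insert $\lambda=a_0 h_\nu^{-1}(1/t)$ and invoke Lemma~\ref{lem:basic_prop_K_h}(5) for $h_\nu$ to conclude $t\,h_\nu(\lambda)\le a_0^{-2}$. Fifth, apply Chebyshev: $\PP(|W^\lambda_t|>M)\le \EE|W^\lambda_t|^2/M^2\le c\,a_1 a_0^{-2}/M^2$, uniformly; hence for $\varepsilon>0$ pick $M$ with $c\,a_1 a_0^{-2}/M^2<\varepsilon$, and then $\mathcal{X}(T_3,a_0,r)\subset\{\mu\colon \mu(B_{M+r})\ge 1-\varepsilon\}$, which is tightness.

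\textbf{Main obstacle.} The computation is essentially routine, so the only genuine care needed is in the bookkeeping of the centring: one must verify that $t\drf^{2.\lambda}_\lambda$ is exactly the mean that makes $Z^{2.\lambda}_t-t\drf^{2.\lambda}_\lambda$ have the clean variance bound, using \eqref{def:br} and the fact that $\drf^{2.\lambda}$ for the triplet $(0,\frac{a_1}{2}\nu|_{B_\lambda},0)$ incorporates precisely the integral over $|z|<\lambda$ (so the truncated L\'evy measure supported in $B_\lambda$ and the cutoff at radius $\lambda$ match up). Once that is checked, the variance is literally $t\cdot\frac{a_1}{2}\int_{|z|<\lambda}|z|^2\nu(dz)$ with no leftover drift term, and the scaling argument closes everything. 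I expect the authors to run essentially this variance-plus-Lemma~\ref{lem:basic_prop_K_h}(5) argument, possibly phrased via the concentration-function estimate of \cite{MR632968} instead of a bare Chebyshev bound, but the mechanism is the same.
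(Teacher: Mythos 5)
Your approach is essentially the same as the paper's, just phrased more elementarily. The paper applies Pruitt's concentration-function estimate from \cite{MR632968} to $Z^{2.\lambda}_t-t\drf^{2.\lambda}_\lambda$ with radius $(R-r)\lambda$, observes that the drift term $|\drf^{2.\lambda}_{(R-r)\lambda}-\drf^{2.\lambda}_\lambda|$ vanishes because $\LM_{2.\lambda}$ is supported in $B_\lambda$, and then computes $h_{2.\lambda}((R-r)\lambda)=\frac{a_1/2}{(R-r)^2\lambda^2}\int_{|z|<\lambda}|z|^2\nu(dz)\le\frac{a_1/2}{(R-r)^2}h_\nu(\lambda)$, bounded since $t\,h_\nu(\lambda)\le 1$. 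Your direct variance/Chebyshev argument is the natural substitute for the Pruitt bound here: since $\LM_{2.\lambda}$ has compact support, $Z^{2.\lambda}_t$ has finite second moment, the centred process has $\EE|Z^{2.\lambda}_t-t\drf^{2.\lambda}_\lambda|^2 = t\,\frac{a_1}{2}\int_{|z|<\lambda}|z|^2\nu(dz)$, and dividing by $\lambda^2$ recovers exactly the same quantity. So the two are the same mechanism; yours trades the reference to \cite{MR632968} for the explicit moment computation, which is slightly more self-contained in this special case.

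One small technical slip: your invocation of Lemma~\ref{lem:basic_prop_K_h}(5) with $\lambda=a_0^2$ actually gives the \emph{lower} bound $h_\nu(\lambda)\ge a_0^{-2}/t$, not the upper bound $h_\nu(\lambda)\le a_0^{-2}/t$ that you claim. Fortunately you do not need property~5 at all: since $a_0\ge1$ and $h_\nu$ is decreasing, $h_\nu(\lambda)=h_\nu(a_0 h_\nu^{-1}(1/t))\le h_\nu(h_\nu^{-1}(1/t))=1/t$, so $t\,h_\nu(\lambda)\le 1$, which is all the argument requires. The conclusion is therefore unaffected, but the stated intermediate inequality (and the constant $a_0^{-2}$) should be corrected.
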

\pf
\noindent
{\it Step 7.}
By 
\cite{MR632968}
there is $c=c(d)$ such that for 
every $\mu\in \mathcal{X}(T_3,a_0,r)$ and  $R> 1+r$,
\begin{align*}
\mu(B_R^c)
&=\PP(|(Z^{2.\lambda}_t-t\drf^{2.\lambda}_\lambda) /\lambda+y|\geq R)
\leq \PP(|(Z^{2.\lambda}_t-t\drf^{2.\lambda}_\lambda) |\geq (R-r) \lambda )\\
&\leq c t  \left( (R-r)^{-1} \lambda^{-1} \left|-\drf^{2.\lambda}_{\lambda}+\int_{\Rd} z \left(\ind_{|z|<r} - \ind_{|z|<1}\right)\LM_{2.\lambda}(dz)\right|  +h_{2.\lambda}((R-r) \lambda)\right)\\
&= ct  h_{2.\lambda}((R-r) \lambda)
= ct (a_1/2)(R-r)^{-2}\int_{|z|<\lambda} \left(|z|^2/\lambda^2 \right)\nu(dz)\\
&\leq ct \frac{(a_1/2)}{(R-r)^{2}} h_\nu(\lambda)
\leq c \frac{(a_1/2)}{(R-r)^{2}}\,,
\end{align*}
which gives the claim.
\qed

\begin{lemma}\label{lem:first_3}
Let $a_0$ be like in Lemma~\ref{lem:first_1}.
There is a constant
$c_{p_3}=c_{p_3}(d,\alpha_3,c_3,a_1,a_2)$
such that
for every $\mu \in \mathcal{X}(T_3,a_0,r)$ and $r>0$,
$$
\int_{\Rd}|\widehat{\mu}(z)|\,dz\leq c_{p_3}\,.
$$
\end{lemma}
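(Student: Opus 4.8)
The plan is to first observe that $\int_{\Rd}|\widehat{\mu}(z)|\,dz$ does not depend on the translation $y$ in the definition of $\mathcal{X}(T_3,a_0,r)$, which is precisely why the bound can be taken uniform in $r$. Indeed, if $\mu$ is the law of $(Z^{2.\lambda}_t-t\drf^{2.\lambda}_\lambda)/\lambda+y$, then $\widehat{\mu}(z)=e^{i\langle z,y\rangle}\,e^{-it\langle z/\lambda,\,\drf^{2.\lambda}_\lambda\rangle}\,e^{-t\LCh_{2.\lambda}(z/\lambda)}$, so $|\widehat{\mu}(z)|=e^{-t\,{\rm Re}[\LCh_{2.\lambda}(z/\lambda)]}$, and after the substitution $w=z/\lambda$,
\[
\int_{\Rd}|\widehat{\mu}(z)|\,dz=\lambda^{d}\int_{\Rd}e^{-t\,{\rm Re}[\LCh_{2.\lambda}(w)]}\,dw\,,\qquad \lambda=a_0 h_\nu^{-1}(1/t)<T_3\,.
\]
Thus the task reduces to bounding this last expression by a constant depending only on $d,\alpha_3,c_3,a_1,a_2$.

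Next I would split the $w$-integral at the radius $c_{p_2}/\lambda$ furnished by Lemma~\ref{lem:first_2}. On $|w|<c_{p_2}/\lambda$ I bound the exponential by $1$, so this part is at most the volume of $B_{c_{p_2}/\lambda}$, namely $(\omega_d/d)(c_{p_2}/\lambda)^{d}$; multiplied by $\lambda^{d}$ it becomes the purely dimensional constant $(\omega_d/d)\,c_{p_2}^{d}$. On $|w|\geq c_{p_2}/\lambda$, Lemma~\ref{lem:first_2} gives ${\rm Re}[\LCh_{2.\lambda}(w)]\geq c_{p_2}^{-1}{\rm Re}[\LCh_\nu(w)]$, so enlarging the domain of integration to all of $\Rd$ leaves us with $\int_{\Rd}e^{-(t/c_{p_2})\,{\rm Re}[\LCh_\nu(w)]}\,dw$.

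To estimate this I would use that $\LCh_\nu$ satisfies $\Cc$ (with $T_\nu=T_3$, $c_\nu=c_3^2a_1/a_2$, $\alpha_\nu=\alpha_3$); applying the implication $\Cc\Rightarrow\Cb$ of Theorem~\ref{thm:equiv} to the L\'evy process with characteristic exponent $\LCh_\nu$ gives $\int_{\Rd}e^{-s\,{\rm Re}[\LCh_\nu(w)]}\,dw\leq c(d,\alpha_3,c_\nu)\,[h_\nu^{-1}(1/s)]^{-d}$ for $s<1/h_\nu(T_3)$. Choosing $s=t/c_{p_2}$ is admissible: since $c_{p_2}\geq1$ and $a_0\geq1$, the hypothesis $\lambda<T_3$ forces $1/t>h_\nu(T_3/a_0)\geq h_\nu(T_3)$, hence $s\leq t<1/h_\nu(T_3)$. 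It then remains to pass from $[h_\nu^{-1}(c_{p_2}/t)]^{-d}$ to $[h_\nu^{-1}(1/t)]^{-d}=(a_0/\lambda)^{d}$: because $\LCh_\nu^{*}$ satisfies $\Ad$, Lemma~\ref{lem:equiv_scal_h} yields $\Aa$ and $\Ab$ for $h_\nu$ with exponent $\alpha_3$ and $\theta_{h_\nu}=T_3$, and $\Ab$ with $u=1/t>h_\nu(T_3)$ and multiplier $c_{p_2}\geq1$ gives $h_\nu^{-1}(1/t)\leq(c_d c_{p_2}/c_\nu)^{1/\alpha_3}\,h_\nu^{-1}(c_{p_2}/t)$, $c_d=16(1+2d)$. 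Combining the two contributions and multiplying through by $\lambda^{d}$ produces $\int_{\Rd}|\widehat{\mu}(z)|\,dz\leq c_{p_3}$ with $c_{p_3}=(\omega_d/d)c_{p_2}^{d}+c(d,\alpha_3,c_\nu)\,(c_d c_{p_2}/c_\nu)^{d/\alpha_3}a_0^{d}$, which depends only on $d,\alpha_3,c_3,a_1,a_2$ through $c_{p_2}$, $a_0$ and $c_\nu$.

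The routine parts here are the volume estimate and the change of variables. The only delicate point I expect is the bookkeeping of the admissible range of $t$: everything must be drawn from the single hypothesis $\lambda=a_0 h_\nu^{-1}(1/t)<T_3$, using $c_{p_2}\geq1$, $a_0\geq1$ and the monotonicity of $h_\nu$, and one has to check that every constant entering the estimate ultimately depends only on $d,\alpha_3,c_3,a_1,a_2$.
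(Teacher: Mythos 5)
Your proposal is correct and follows essentially the same route as the paper's argument in \emph{Step 8}: reduce to $\lambda^d\int e^{-t\,{\rm Re}[\LCh_{2.\lambda}]}$ by the change of variables, split at radius $c_{p_2}/\lambda$ using Lemma~\ref{lem:first_2}, and apply the $\Cb$ bound for $\LCh_\nu$ together with the weak scaling of $h_\nu$ from Lemma~\ref{lem:equiv_scal_h}. Your bookkeeping of the admissible range of $t$ and of the dependence of constants on $d,\alpha_3,c_3,a_1,a_2$ matches the paper's.
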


\pf
\noindent
{\it Step 8.}
The characteristic exponent of $\mu\in \mathcal{X}$ equals $-i\left<x,y -t\drf^{2.\lambda}_\lambda/\lambda \right>+t\LCh_{2.\lambda}(x/\lambda)$.
Since
$\LCh_\nu$ satisfies $\Cc$, 
%with $T_\nu=T_3$, $c_\nu=(c_3^2 a_1)/a_2 $ and $\alpha_%\nu=\alpha_3$,
 by 
$\Cb$ there is 
$c_\nu'=c_\nu'(d,\alpha_\nu,c_\nu)$
such that for $ \lambda=a_0\, h_\nu^{-1}(1/t)<T_3$
we have
\begin{align*}
\int_{\Rd}|\widehat{\mu}(z)|\,dz&=\int_{\Rd}e^{-t {\rm Re}[\LCh_{2.\lambda}(z/\lambda)]}\,dz
= \lambda^d \int_{\Rd}e^{-t {\rm Re}[\LCh_{2.\lambda}(z)]}\,dz\\
&\leq \lambda^d \int_{|z|\leq c_{p_2}\lambda^{-1}}\,dz
+ \lambda^d \int_{\Rd}e^{-(t/c_{p_2}) {\rm Re}[\LCh_\nu(z)]}\,dz\\
&\leq \omega_d c_{p_2}^d + c_\nu' \lambda^d \left[h_\nu^{-1}(c_{p_2}/t)\right]^{-d}
\leq \omega_d c_{p_2}^d+c_\nu' a_0^d (c_{p_2}c_d/c_\nu)^{d/\alpha_\nu}\,.
\end{align*}
The last inequality follows from Lemma~\ref{lem:equiv_scal_h}.
\qed

\begin{lemma}\label{lem:first_4}
Let $a_0$ be like in Lemma~\ref{lem:first_1}.
For every $r,r_1>0$
there exists an infinitely divisible probability measure $\mu_0$ such that
$$
\inf_{\mu \in \mathcal{X}
(T_3,a_0,r)
}
\mu(B_{r_1}) \geq \mu_0(B_{r_1})\,,
$$
The measure  $\mu_0$ is a weak limit of a sequence $\mu_n \in \mathcal{X}
(T_3,a_0,r)$
and it is absolutely continuous with a continuous density
$$g_0(x)=(2\pi)^{-d}\int_{\R^d} e^{-i\left<x,z\right>} \widehat{\mu}_0(z)\,dz\,.
$$
\end{lemma}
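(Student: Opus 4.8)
The plan is to realize $\mu_0$ as a weak subsequential limit of a minimizing sequence drawn from $\mathcal{X}(T_3,a_0,r)$ and then to propagate the required regularity through the limit. Fix $r,r_1>0$ and choose $\mu_n\in\mathcal{X}(T_3,a_0,r)$ with $\mu_n(B_{r_1})\to\inf_{\mu\in\mathcal{X}(T_3,a_0,r)}\mu(B_{r_1})$. By Lemma~\ref{lem:t_0} the family $\mathcal{X}(T_3,a_0,r)$ is tight, so Prokhorov's theorem lets us pass to a subsequence (still written $\mu_n$) converging weakly to a probability measure $\mu_0$. Each $\mu_n$ is infinitely divisible, being the law of an affine image $(Z^{2.\lambda_n}_{t_n}-t_n\drf^{2.\lambda_n}_{\lambda_n})/\lambda_n+y_n$ of a Lévy process at a fixed time (affine maps preserve infinite divisibility); since the class of infinitely divisible laws on $\Rd$ is closed under weak convergence, $\mu_0$ is infinitely divisible as well.

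Next I would check that $\widehat{\mu}_0\in L^1(\Rd)$. Weak convergence yields $\widehat{\mu}_n(z)\to\widehat{\mu}_0(z)$ for every $z\in\Rd$ (the function $x\mapsto e^{i\langle z,x\rangle}$ is bounded and continuous), so Fatou's lemma together with Lemma~\ref{lem:first_3} gives
$$
\int_{\Rd}|\widehat{\mu}_0(z)|\,dz\leq \liminf_{n\to\infty}\int_{\Rd}|\widehat{\mu}_n(z)|\,dz\leq c_{p_3}\,.
$$
Consequently $\mu_0$ is absolutely continuous with a bounded continuous density given by Fourier inversion, $g_0(x)=(2\pi)^{-d}\int_{\Rd}e^{-i\langle x,z\rangle}\widehat{\mu}_0(z)\,dz$, as asserted.

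Finally, because $g_0$ is continuous we have $\mu_0(\partial B_{r_1})=0$, hence $\mu_n(B_{r_1})\to\mu_0(B_{r_1})$ by the portmanteau theorem (one may also simply invoke $\liminf_n\mu_n(B_{r_1})\geq\mu_0(B_{r_1})$ for the open ball $B_{r_1}$). Combined with the defining property of the minimizing sequence this yields
$$
\inf_{\mu\in\mathcal{X}(T_3,a_0,r)}\mu(B_{r_1})=\lim_{n\to\infty}\mu_n(B_{r_1})\geq\mu_0(B_{r_1})\,,
$$
which is the claimed bound. This is a soft compactness argument with no genuinely hard step; the only points that need a little care are the appeal to closedness of the infinitely divisible class under weak limits and making sure the portmanteau estimate is applied in the direction producing a \emph{lower} bound on $\mu_0(B_{r_1})$ — both handled above.
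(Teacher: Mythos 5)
Your proof is correct and follows essentially the same route as the paper: pick a minimizing sequence, use the tightness from Lemma~\ref{lem:t_0} with Prokhorov's theorem to extract a weak limit $\mu_0$, invoke closedness of infinitely divisible laws under weak convergence, combine Fatou's lemma with Lemma~\ref{lem:first_3} to get $\widehat{\mu}_0\in L^1$, and finish with the portmanteau lower bound for the open ball. The only (harmless) extra in your write-up is the explicit remark that affine images preserve infinite divisibility and the alternative ending via $\mu_0(\partial B_{r_1})=0$; the paper simply cites the open-set inequality directly.
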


\pf
{\it Step 9.}
Let $\mu_n$ be a sequence realizing the infimum. By 
Lemma~\ref{lem:t_0}
and
Prokhorov's theorem
we can assume that $\mu_n$ converges weakly to a probability measure $\mu_0$. Thus, since $B_r$ is open,
the inequality holds
and $\mu_0$ is infinitely divisible, see
 \cite[Theorem~8.7]{MR1739520}. 
By  
\cite[Proposition~2.5(xii) and (vi)]{MR1739520},  
Lemma~\ref{lem:first_3} and
Fatou's lemma we get
$
\int_{\Rd}|\widehat{\mu}_0(z)|\,dz \leq c_{p_3}
$.
This ends the proof.
\qed

\section{Lower bounds}\label{sebsec:low_C3}

In this section we discuss a L{\'e}vy process $Y$ in $\Rd$
with a generating triplet $(A,\LM,\drf)$.
The analysis of the upper bounds 
of transition densities 
carried out in Section~\ref{sec:equivalence}
led to lower bounds in
Lemma~\ref{lem:CI}, Corollary~\ref{lem:C6} and~\ref{cor:C7}.
As explained in
Remark~\ref{rem:sym},
Lemma~\ref{lem:CI} applied to symmetric L{\'e}vy processes gives the so called near-diagonal lower bounds. The situation becomes more complicated if the symmetry is spoiled, and an obscure shift by unknown $x_t$ appears.
This is a potential obstacle for further applications.
We propose the following correction to remove this problem: show that
at the expense of a constant one can freely choose 
$\theta>0$ for which the estimates
are valid with any $y\in\Rd$ satisfying $|y|\leq \theta h^{-1}(1/t)$.
This in turn will make it possible to remove $x_t$ by the choice of $\theta$ and $y$.
Obviously, such approach will fail in general 
even under $\Cc$,
with $\alpha$-stable subordinators as counterexamples (see Remark~\ref{rem:after_low}),
so additional restrictions will be needed.

First we 
concentrate on
 the case with non-zero Gaussian part.

\begin{lemma}\label{lem:C3gauss}
We have $\det (A)\neq 0$ if and only if $\Cc$ holds and $A \neq 0$.
If $\det(A)\neq 0$ and $\int_{\Rd}|x|^2\LM(dx)<\infty$, then $\Cc$ holds with $T_3=\infty$.
\end{lemma}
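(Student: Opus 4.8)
The plan is to push everything through the equivalent condition $\Cd$ of Theorem~\ref{thm:equiv}, in which the Gaussian matrix $A$ appears explicitly, together with the elementary spectral facts that for a symmetric non-negative definite $A$ one has $\|A\|=\lambda_{\max}(A)$, that $A\neq0$ iff $\|A\|>0$, and that $\langle x,Ax\rangle\geq\lambda_{\min}(A)|x|^{2}$ for all $x$, with $\lambda_{\min}(A)>0$ iff $\det(A)\neq0$.

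\emph{From $\det(A)\neq0$ to $\Cc$ and $A\neq0$.} The part $A\neq0$ is immediate, and for $\Cc$ it suffices to check $\Cd$. By \eqref{ineq:comp_TJ} we have $\LCh^{*}(r)\leq 2h(1/r)$, and for $r\geq1$, bounding $1\wedge r^{2}|z|^{2}$ by $r^{2}|z|^{2}$ on $\{|z|<1\}$ and by $1\leq r^{2}$ on $\{|z|\geq1\}$,
\begin{align*}
h(1/r)=r^{2}\|A\|+\int_{\Rd}\big(1\wedge r^{2}|z|^{2}\big)\,\LM(dz)\leq r^{2}\Big(\|A\|+\int_{|z|<1}|z|^{2}\,\LM(dz)+\LM(B_{1}^{c})\Big)=:C_{0}r^{2}.
\end{align*}
Since $\langle x,Ax\rangle\geq\lambda_{\min}(A)|x|^{2}$, this gives $\LCh^{*}(|x|)\leq 2C_{0}|x|^{2}\leq(2C_{0}/\lambda_{\min}(A))\langle x,Ax\rangle$ for $|x|>1$, i.e.\ $\Cd$ with $T_{4}=1$, and hence $\Cc$ by Theorem~\ref{thm:equiv}.

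\emph{From $\Cc$ (hence $\Cd$) and $A\neq0$ to $\det(A)\neq0$.} Suppose $\det(A)=0$ and pick a unit vector $v$ with $Av=0$. Testing $\Cd$ at $x=rv$ for $r>1/T_{4}$ and using $\langle rv,A(rv)\rangle=0$,
\begin{align*}
\LCh^{*}(r)\leq c_{4}r^{2}\int_{|\langle v,z\rangle|<1/r}|\langle v,z\rangle|^{2}\,\LM(dz)=:c_{4}r^{2}\varepsilon(r),\qquad r>1/T_{4}.
\end{align*}
For $r\geq1$ the integrand is dominated by $1\wedge|z|^{2}$ and tends to $0$ pointwise as $r\to\infty$, so dominated convergence gives $\varepsilon(r)\to0$. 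On the other hand, choosing $z=rw$ with $w$ a unit eigenvector of $A$ for the eigenvalue $\|A\|$ yields $\LCh^{*}(r)\geq{\rm Re}[\LCh(rw)]\geq\langle rw,A(rw)\rangle=\|A\|r^{2}$. Hence $\|A\|\leq c_{4}\varepsilon(r)\to0$, contradicting $A\neq0$.

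\emph{The last statement, and the main obstacle.} If $\det(A)\neq0$ and $M:=\int_{\Rd}|z|^{2}\,\LM(dz)<\infty$, then $1-\cos t\leq t^{2}/2$ gives, for every $z\in\Rd$,
\begin{align*}
{\rm Re}[\LCh(z)]=\langle z,Az\rangle+\int_{\Rd}\big(1-\cos\langle z,w\rangle\big)\,\LM(dw)\leq\big(\|A\|+M/2\big)|z|^{2},
\end{align*}
so $\LCh^{*}(r)\leq(\|A\|+M/2)r^{2}$ for all $r>0$, whence $\LCh^{*}(|x|)\leq((\|A\|+M/2)/\lambda_{\min}(A))\langle x,Ax\rangle$ for every $x\neq0$; that is, $\Cd$ holds with $T_{4}=\infty$, and by the last assertion of Theorem~\ref{thm:equiv} (if one $T_{i}=\infty$ then all are), $\Cc$ holds with $T_{3}=\infty$. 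The only genuinely delicate point is the reverse implication: one must see that $\Cc$ forbids a degenerate but nonzero Gaussian part, and the mechanism is that $\Cd$, read along a null direction of $A$, forces $\LCh^{*}(r)=o(r^{2})$ as $r\to\infty$, which clashes with the unconditional lower bound $\LCh^{*}(r)\geq\|A\|r^{2}$ supplied by the Gaussian part; quantifying this $o(r^{2})$ is exactly the dominated-convergence step above, and everything else is routine upper estimation of $\LCh^{*}$.
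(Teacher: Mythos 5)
Your proposal is correct and rests on the same mechanism as the paper's proof: when $\det(A)\neq 0$, the bound $\langle x,Ax\rangle\geq\lambda_{\min}(A)|x|^2$ combined with $h(1/r)\lesssim r^2$ locally (globally once $\int|z|^2\,\LM(dz)<\infty$) gives the scaling, while a null direction of a nonzero $A$ forces the jump contribution to be $o(r^{\pm 2})$ by dominated convergence, contradicting the Gaussian contribution $\|A\|r^{\pm 2}$. The only presentational difference is that you route everything through $\Cd$ and Theorem~\ref{thm:equiv} and send $r\to\infty$ along $\LCh^*$, whereas the paper works directly with $h$ and \eqref{ineq:comp_TJ} and sends $r\to 0^+$; these are the same limiting argument in dual coordinates.
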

\begin{proof}
We first prove that under $\Cc$ the condition $A\neq 0$ implies $\det (A)\neq 0$. 
Indeed, if that was not the case we would have $Ax=0$ for some $|x|=1$ and then
 by \eqref{ineq:comp_TJ} with $c_d=16(1+2d)$,
\begin{align*}
c_3  h(r) r^2 
&\leq (c_d/2)  {\rm Re}[\LCh(x/r)] r^2
=(c_d/2) r^2 \int_{\Rd}
\big( 1-\cos(\left<x/r,z\right>) \big)\LM(dz)\\
& \leq c_d \int_{\Rd} \left( r^2 \land |z|^2 \right)\LM(dz)\,,
\end{align*}
which leads to a contradiction since 
the latter tends to zero
as $r\to 0^+$.
On the other hand, 
if $\det(A)\neq 0$, since
$A$ is non-negative definite,
there is $c>0$ such that $\left<x,Ax\right>\geq c |x|^2$.
We also have
$\|A\| \leq h(r)r^2  \leq h(R)R^2=:\kappa$ for $r< R$,
thus
${\rm Re}[\LCh(x)]\geq \left<x,Ax\right>\geq (c/\kappa) h(1/|x|)$
for $|x|>1/R$ and $h$ satisfies $\Aa$ with $\theta_h=R$.
Then $\Cc$ holds with $T_3=R$ by \eqref{ineq:comp_TJ}
and Lemma~\ref{lem:equiv_scal_h}.
If additionally $\int_{\Rd}|x|^2\LM(dx)<\infty$, the above is true with
$\kappa=\|A\|+\int_{\Rd}|x|^2\LM(dx)$ and $R=\infty$.
\end{proof}

Note that 
the Gaussian component
of $h$ equals $r^{-2} \|A\|$. Thus, if $A$ is non-zero,
it will dominate locally. This is reflected in the next result.

\begin{proposition}\label{thm:low_gauss}
Assume that $\Cc$ holds and  $A \neq 0$.
Then
for all $T,\theta>0$ there 
is
$\tilde{c}=\tilde{c}(d,A,\LM,T,\theta)>0$
such that
for all $0<t<T$ and $|x|\leq\theta \sqrt{t}$,
$$
p(t,x+t \drf_{\sqrt{t}}) \geq \tilde{c} \,t^{-d/2}\,.
$$
If additionally $\int_{\Rd}|x|^2\LM(dx)<\infty$, then we can take $T=\infty$ with $\tilde{c}>0$.
\end{proposition}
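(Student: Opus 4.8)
The plan is as follows. Since $\Cc$ holds and $A\neq 0$, Lemma~\ref{lem:C3gauss} yields $\det(A)\neq 0$; in fact this is the only feature of the hypothesis that is used. Decompose $Y$ into two independent L\'evy processes $Y_t=G_t+J_t$, where $G$ has generating triplet $(A,0,0)$ and $J$ has triplet $(0,\LM,\drf)$; note that the recentring $\drf_r$ of \eqref{def:br} is the same for $Y$ and for $J$. Write $\mu_{J,t}$ for the law of $J_t$ and $g(t,\cdot)$ for the (strictly positive, non-degenerate) Gaussian density of $G_t$. Then $p(t,\cdot)$ exists for every $t>0$ and is given by the convolution $p(t,w)=\int_{\Rd}g(t,w-z)\,\mu_{J,t}(dz)$. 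The guiding idea is that smoothing by a non-degenerate Gaussian converts an estimate of the form ``$J_t$ carries mass at least $1/2$ in a ball of radius $O(\sqrt t)$'', whose centre is a priori unknown, into a genuine pointwise lower bound of order $t^{-d/2}$ on an entire neighbourhood of prescribed shape --- which is precisely what removes the obscure shift $x_t$ present in Lemma~\ref{lem:CI}.

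First I would record a concentration estimate for $J$. By \cite{MR632968} there is $c=c(d)$ with $\PP(|J_t-t\drf_r|\geq r)\leq c\,t\,h_J(r)$ for all $r>0$, where $h_J\leq h$ is the concentration function of $J$. Fix the horizon $T$. Since $r\mapsto r^2h(r)$ is non-decreasing (Lemma~\ref{lem:basic_prop_K_h}), for $M\geq1$ and $0<t<T$ one has $t\,h(M\sqrt t)\leq T\,h(M\sqrt T)$; as $h$ is continuous, decreasing and tends to $0$ at infinity, $M=M(d,A,\LM,T)\geq1$ can be chosen so that $c\,T\,h(M\sqrt T)\leq 1/2$, giving
\begin{equation*}
\PP\big(|J_t-t\drf_{M\sqrt t}|\leq M\sqrt t\big)\geq 1/2\,,\qquad 0<t<T\,.
\end{equation*}
If moreover $\int_{\Rd}|x|^2\LM(dx)<\infty$, then $r^2h(r)$ increases to $\kappa:=\|A\|+\int_{\Rd}|x|^2\LM(dx)<\infty$, so $h(r)\leq\kappa/r^2$ for every $r$, $M=M(d,\kappa)$ may be chosen independently of $T$, and the displayed inequality holds for all $t>0$ (so $T=\infty$).

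Then I would assemble the bound. Given $0<t<T$ and $|x|\leq\theta\sqrt t$, rewrite $x+t\drf_{\sqrt t}=w+t\drf_{M\sqrt t}$ with $w:=x+t(\drf_{\sqrt t}-\drf_{M\sqrt t})$. By \eqref{ineq:dif_br}, $|\drf_{\sqrt t}-\drf_{M\sqrt t}|\leq M\sqrt t\,h(\sqrt t)$, while $t\,h(\sqrt t)$ is bounded by a constant depending on $d,A,\LM,T$ (again by monotonicity of $r^2h(r)$, resp. by $h(r)\leq\kappa/r^2$); hence $|w|\leq R_1\sqrt t$ for some $R_1=R_1(d,A,\LM,T,\theta)$. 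Restricting the convolution integral to $\{z\colon|z-t\drf_{M\sqrt t}|\leq M\sqrt t\}$, on which $|w+t\drf_{M\sqrt t}-z|\leq(R_1+M)\sqrt t$, and using the uniform lower bound $g(t,u)\geq c_2\,t^{-d/2}$ for $|u|\leq(R_1+M)\sqrt t$, with $c_2=c_2(d,A,R_1,M)>0$, one obtains
\begin{equation*}
p(t,x+t\drf_{\sqrt t})\;\geq\; c_2\,t^{-d/2}\,\PP\big(|J_t-t\drf_{M\sqrt t}|\leq M\sqrt t\big)\;\geq\;(c_2/2)\,t^{-d/2}\,,
\end{equation*}
so the claim follows with $\tilde c=c_2/2$ (and with $T=\infty$ in the finite second moment case).

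The only slightly delicate points are purely organisational: matching the shift $t\drf_{\sqrt t}$ occurring in the statement with the cut-off scale $M\sqrt t$ forced by the concentration estimate --- handled by \eqref{ineq:dif_br} and the monotonicity of $r^2h(r)$ --- and, in the unbounded-moment case, checking that $M$ can be chosen compatibly with the prescribed horizon $T$, which uses only $h(r)\to 0$. There is no essential obstacle beyond this bookkeeping; the substantive work is done, at no cost, by the positivity and the $t^{-d/2}$ scaling of the non-degenerate Gaussian density.
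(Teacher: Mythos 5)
Your proof is correct, and it takes a genuinely different (and somewhat leaner) route than the paper's. The paper decomposes $Y=Z^1+Z^2$ with generating triplets $(\frac12 A,\LM,\drf)$ and $(\frac12 A,0,0)$, keeping half the Gaussian in $Z^1$ precisely so that Lemma~\ref{lem:C3gauss} makes $\Cc$ hold for $\LCh_1$, which is needed to invoke Lemma~\ref{lem:CI} and obtain a near-diagonal lower bound for $p_1$ with its unknown shift $x_t$. That bound is then convolved against the explicit Gaussian $p_2$. You instead split off the full Gaussian, $Y=G+J$ with triplets $(A,0,0)$ and $(0,\LM,\drf)$, and replace the whole Lemma~\ref{lem:CI} apparatus by the raw Pruitt concentration estimate $\PP(|J_t-t\drf_r|\geq r)\leq c\,t\,h_J(r)$ from \cite{MR632968}: the non-degenerate Gaussian factor then converts ``mass at least $1/2$ inside a ball of radius $M\sqrt t$'' directly into the pointwise bound $\geq (c_2/2)\,t^{-d/2}$, and the unknown centre of concentration is absorbed by the Gaussian's strict positivity, so no tracking of any $x_t$ is needed. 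Your route does not require $\Cc$ for $J$ at all; it makes manifest that, by Lemma~\ref{lem:C3gauss}, the hypotheses are equivalent to $\det(A)\neq0$, and only that non-degeneracy (plus the basic monotonicity $r\mapsto r^2h(r)$ nondecreasing, $h(r)\to0$, and \eqref{ineq:dif_br}) is used. The bookkeeping step you flag — matching the recentring $t\drf_{\sqrt t}$ in the statement with the cut-off scale $M\sqrt t$ via \eqref{ineq:dif_br} and $r^2h(r)$ — is the same in spirit as the paper's estimate of $|\sigma_t|$, and your treatment of it (including the $T=\infty$ case with $\kappa=\|A\|+\int|x|^2\LM(dx)$) is sound. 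What your approach buys is a self-contained, more elementary proof; what the paper's buys is uniformity with the decomposition strategy used elsewhere in the article (Lemma~\ref{lem:CI}, Section~\ref{sec:decomposition}) and reuse of the near-diagonal machinery.
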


\pf
We consider two L{\'e}vy processes $Z^1$ and $Z^2$ 
that correspond to  $(\frac{1}{2}A,N,\drf)$ and $(\frac{1}{2}A,0,0)$, respectively.
By Lemma~\ref{lem:C3gauss} the condition $\Cc$ holds for $\LCh_1$.
Lemma~\ref{lem:CI} assures that
there is a constant 
$c=c(d,A,\LM,T)\geq 1$
such that
for every 
$t<T$ there is
$|x_t|\leq c h_1^{-1}(1/t)$
so that for every $|y|\leq (1/c) h_1^{-1}(1/t)$
we have
$
p_1(t,y+x_t+t \drf_{[h_1^{-1}(1/t)]})\geq (1/c) \left[h_1^{-1}(1/t)\right]^{-d}
$.
Since $\LCh=\LCh_1+\LCh_2$ we get
\begin{align*}
p(t, x+t \drf_{\sqrt{t}})&=\int_{\Rd} p_1(t,x+t \drf_{\sqrt{t}}- z) p_2(t,z)\,dz\\
&= \int_{\Rd} p_1(t,y +x_t + t \drf_{[h_1^{-1}(1/t)]}) p_2(t,\sigma_t-y)\,dy\\
&\geq (1/c) \left[h_1^{-1}(1/t)\right]^{-d} \PP(|Z^2_t -\sigma_t|\leq (1/c) h_1^{-1}(1/t) )\,,
\end{align*}
where $\sigma_t:=x-x_t+t\drf_{\sqrt{t}}-tb_{[h_1^{-1}(1/t)]}$.
Now,
for $r\leq R:=h_1^{-1}(1/T)$ we have
$
\frac{1}{2}\|A\| \leq h_1(r) r^2 \leq h_1(R)R^2
=:\!\kappa
$,
which by putting $r=h_1^{-1}(1/t)$, implies for $t< 1/h_1(R)=T$,
$$
1/\kappa \leq t/[h_1^{-1}(1/t)]^2\leq 2/\|A\|\,.
$$
By \eqref{ineq:dif_br}
we get for $t<T$ that
\begin{align*}
t |\drf_{\sqrt{t}} -\drf_{[h_1^{-1}(1/t)]}|\leq (1\vee \kappa)  (1\vee (2/\|A\|)^{1/2}) h_1^{-1}(1/t)
\qquad 
\mbox{and}
\qquad
|x|\leq \theta (2/\|A\|)^{1/2}\, h_1^{-1}(1/t)\,.
\end{align*}
Thus
$|\sigma_t|\leq m_1 h_1^{-1}(1/t)$
with
$m_1=m_1(d,A,\LM,T,\theta)$.
Note that
by Lemma~\ref{lem:C3gauss}
the density of $Z_t^2$ equals
$
p_2(t,x)= (2\pi t)^{-d/2} (\det (A))^{-1/2} \exp \left\{- \left<x,A^{-1}x\right>/(2t)\right\}
$.
Then
\begin{align*}
&\PP(|Z^2_t -\sigma_t|\leq (1/c) h_1^{-1}(1/t) )
=\int_{|z-\sigma_t/h_1^{-1}(1/t)|\leq 1/c} 
p_2(t/[h_1^{-1}(1/t)]^2, z)\,dz\\
&\geq \inf_{|y|\leq m_1} \int_{|z-y|\leq 1/c} \left( 2\kappa /\|A\| \right)^{-d/2} p_2(1/\kappa, z)\,dz=m_2 >0\,.
\end{align*}
Eventually, for all $t<T$ and $|x|\leq \theta \sqrt{t}$,
\begin{align*}
p(t,x+t \drf_{\sqrt{t}})
\geq 
(m_2/c) \left[h_1^{-1}(1/t)\right]^{-d} \geq   (m_2/c)  (\|A\|/2)^{1/2} \,t^{-d/2}\,.
\end{align*}
If $\int_{\Rd}|x|^2\LM(dx)<\infty$, the above is valid for all $t>0$ with $\kappa=\|A\|/2+\int_{\Rd}|x|^2\LM(dx)$.
\qed

Now we
focus on the case with
zero
Gaussian part.
We  record
that processes
satisfying
assumptions 
of Proposition~\ref{thm:low_gauss}
have a non-zero symmetric (Gaussian) part
and their trajectories
are of infinite variation
\cite[Theorem~21.9]{MR1739520}.
We exploit this two features of processes separately,
and combine them with the decomposition of
Section~\ref{sec:decomposition}
to obtain
 non-local counterparts 
of Proposition~\ref{thm:low_gauss}.
We start by 
engaging a symmetric L{\'e}vy measure~$\nu_s(dx)$.
The
assumptions and the claim
are stated by means of  
$\LCh_s$ and $h_s$
that 
correspond to the  generating triplet~$(0,\nu_s,0)$.
The result extends
 part of 
\cite[Theorem~2]{MR3357585}
and
in our setting
improves 
\cite[Theorem~2.3]{MR3139314},
\cite[Theorem~1]{MR3235175}.

\begin{theorem}\label{thm:low_jump_impr}
Assume 
that $\Cc$ holds and
 $A=0$. Suppose
there is
 $a_1\in(0,1]$ 
such that
$$
a_1 \,\nu_s (dx) \leq \LM(dx)\,,
$$
and $a_2 \in [1,\infty)$ such that for every $|x|>1/T_3$,
$$
{\rm Re}[\LCh (x)] \leq a_2 \,{\rm Re}[\LCh_s (x)]\,. 
$$
Then
for all $T, \theta>0$ there 
is
a constant
$\tilde{c}=\tilde{c}(d,\alpha_3,c_3,T_3,a_1,a_2,\nu_s,T,\theta)>0$ such that
for all $0<t<T$ and $|x|\leq\theta h_s^{-1}(1/t)$,
$$
p(t,x+t \drf_{[h_s^{-1}(1/t)]}) \geq \tilde{c} \left[ h^{-1}_s(1/t)\right]^{-d}\,.
$$
If $T_3=\infty$, then we can take $T=\infty$ with $\tilde{c}>0$.
\end{theorem}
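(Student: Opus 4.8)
The plan is to run the decomposition of Section~\ref{sec:decomposition} with $\nu:=\nu_s$. Since $A=0$, the hypotheses of the theorem are precisely the standing assumptions of that section, so the auxiliary objects $\LM_{1.\lambda}$, $\LM_{2.\lambda}$, $Z^{1.\lambda}$, $Z^{2.\lambda}$, $h_\nu=h_s$, the constants $a_0,c_{p_1}$, and the families $\mathcal{X}(T,a,r)$ are all at our disposal. The extra input is that $\nu_s$ is \emph{symmetric}: consequently $\LCh_{2.\lambda}$ is real and even, $Z^{2.\lambda}$ is a symmetric L\'evy process, $\drf^{2.\lambda}_r=0$ for every $r$, and $\drf_r=\drf^{1.\lambda}_r$.

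First I would fix $\theta>0$ and apply Proposition~\ref{prop:aux_lower} with $\theta_1:=\theta$, $\theta_2:=a_2c_d$ (where $c_d=16(1+2d)$) and $\Theta_t:=t\drf_{[h_s^{-1}(1/t)]}$. Its admissibility requirement $|t\drf_\lambda-\Theta_t|\leq\theta_2\lambda$, with $\lambda=a_0h_s^{-1}(1/t)$, holds because \eqref{ineq:dif_br} gives $|\drf_\lambda-\drf_{[h_s^{-1}(1/t)]}|\leq a_0h_s^{-1}(1/t)\,h(h_s^{-1}(1/t))$, while $h(r)\leq a_2c_dh_s(r)$ for $r<T_3$ (a consequence of $\Cc$, of ${\rm Re}[\LCh(x)]\leq a_2{\rm Re}[\LCh_s(x)]$, and of \eqref{ineq:comp_TJ}) yields $t\,h(h_s^{-1}(1/t))\leq a_2c_d$. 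Proposition~\ref{prop:aux_lower} then gives, for all $t<1/h_s(T_3/a_0)$ and all $|x|\leq\theta h_s^{-1}(1/t)$,
\[
p(t,x+t\drf_{[h_s^{-1}(1/t)]})\geq\frac{a_0^{-d}}{4\omega_d}\big[h_s^{-1}(1/t)\big]^{-d}\inf_{\mu\in\mathcal{X}(T_3,a_0,r_0)}\mu(B_{c_{p_1}}),\qquad r_0:=1+\theta+a_2c_d,
\]
so everything reduces to the positivity of this infimum and to removing the restriction $t<1/h_s(T_3/a_0)$.

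The crux is the positivity of $\inf_{\mu\in\mathcal{X}(T_3,a_0,r_0)}\mu(B_{c_{p_1}})$. By Lemma~\ref{lem:first_4} it is bounded below by $\mu_0(B_{c_{p_1}})$, where $\mu_0$ is the weak limit of a minimizing sequence $\mu_n\in\mathcal{X}(T_3,a_0,r_0)$ and $\mu_0$ possesses a continuous density. Each $\mu_n$ is the law of $Z^{2.\lambda_n}_{t_n}/\lambda_n+y_n$ with $|y_n|\leq r_0$ (recall $\drf^{2.\lambda}_\lambda=0$); after passing to a subsequence with $y_n\to y_0$ one has $\delta_{-y_n}*\mu_n\to\delta_{-y_0}*\mu_0=:\varrho_0$ weakly. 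Since $\delta_{-y_n}*\mu_n$ is the law of $Z^{2.\lambda_n}_{t_n}/\lambda_n$, a symmetric infinitely divisible measure, $\varrho_0$ is symmetric and infinitely divisible, and it has a continuous density (that of $\mu_0$ shifted by $y_0$). Now the support of a symmetric infinitely divisible law on $\Rd$ is a closed subgroup of $\Rd$ (see \cite{MR1739520}); as $\varrho_0$ has a density, $\supp\varrho_0$ has positive Lebesgue measure, and a closed subgroup of $\Rd$ of positive measure is $\Rd$ itself. Hence $\supp\mu_0=\supp\varrho_0+y_0=\Rd$, so $\mu_0(B_{c_{p_1}})>0$. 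This establishes the theorem for $t<1/h_s(T_3/a_0)$ with some constant $\tilde c=\tilde c(d,\alpha_3,c_3,T_3,a_1,a_2,\nu_s,\theta)>0$.

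When $T_3=\infty$ the range $t<1/h_s(T_3/a_0)$ is all of $(0,\infty)$, and the proof is complete. When $T_3<\infty$ I would pass to an arbitrary horizon $t<T$ by finitely many applications of the Chapman--Kolmogorov identity, as in the last part of the proof of Lemma~\ref{lem:CI}: at each doubling of the time range one uses $h_s^{-1}(1/t)\leq\tilde c\,h_s^{-1}(2/t)$ on the relevant compact time interval, controls the drift correction $t\,|\drf_{[h_s^{-1}(1/t)]}-\drf_{[h_s^{-1}(2/t)]}|$ by \eqref{ineq:dif_br}, and uses that the previous-step estimate is available for \emph{every} value of the parameter playing the role of $\theta$; this forces the final $\tilde c$ to depend also on $T$. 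I expect the positivity step to be the real obstacle: one must observe that the symmetry of $\nu_s$ survives in the limit $\varrho_0$ and that a symmetric infinitely divisible law carrying a density has full support. The verification of the hypotheses of Proposition~\ref{prop:aux_lower} and the time bootstrap are routine.
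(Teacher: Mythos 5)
Your proposal follows essentially the same route as the paper's proof: the decomposition of Section~\ref{sec:decomposition} with $\nu=\nu_s$, the reduction via Proposition~\ref{prop:aux_lower} with the same choice of $\theta_1,\theta_2,\Theta_t$, the positivity of $\inf_{\mu\in\mathcal{X}}\mu(B_{c_{p_1}})$ through Lemma~\ref{lem:first_4} combined with the fact that the symmetric infinitely divisible limit $\varrho_0$ has full support, and a Chapman--Kolmogorov time bootstrap. The only differences are cosmetic: you conclude full support from \emph{positive Lebesgue measure of the support}, while the paper observes that the continuous symmetric density attains its maximum at $0$ and hence is bounded below near the origin; and your bootstrap doubles the time horizon whereas the paper steps forward by a fixed $t_0$ and inducts on $k$ --- both variants work and rely on the same scaling stretch of Remark~\ref{rem:rozciaganie}.
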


\pf
Consider the decomposition of $Y$ introduced in Section~\ref{sec:decomposition} with $\nu=\nu_s$.
We will apply Proposition~\ref{prop:aux_lower} to conclude the statement of the theorem, but
first we prove an auxiliary result, which
complements preparatory {\it Steps 1.-9.} used in proofs of Lemmas~\ref{lem:first_1}, \ref{lem:first_2},
Proposition~\ref{prop:aux_lower}
and Lemmas~\ref{lem:first_3}, \ref{lem:t_0} and~\ref{lem:first_4}.

\noindent
{\it Step 10.}
Let  $a_0$ be taken from Lemma~\ref{lem:first_1}.
We show that
for every $r,r_1>0$,
$$\inf_{\mu \in \mathcal{X}
(T_3,a_0,r)
} \mu(B_{r_1})=c_{st10}  >0\,,$$
and $c_{st10}=c_{st10}(T_3,a_0,a_1,r,r_1,\nu_s)$.
Recall that 
$\mathcal{X}
(T,a,r)$
is defined 
in
\eqref{def:class_distr}.
Note also that $t\drf^{2.\lambda}_\lambda=0$
and $Z^{2.\lambda}_t$ is symmetric.
Let $\mu_n$, $\mu_0$ and $g_0(x)$ be like in Lemma~\ref{lem:first_4}.
Let $y_n$ be such that $\mu_n$ is the distribution of $Z^{2.\lambda}_t/\lambda+y_n$.
Since $|y_n|\leq r$, by choosing a subsequent, we can assume that $y_n$ converges to $y_0.$
Then $\tilde{\mu}_0(dx)=\mu_0(dx+y_0)$ is a symmetric infinitely divisible probability measure, as a weak limit of symmetric $\mu_n(dx+y_n)$,
with a continuous symmetric density 
$$\tilde{g}_0(x)=g_0(x+y_0)\,,$$ 
and hence
$$
\sup_{x\in\Rd} \tilde{g}_0(x)=\tilde{g}_0(0)\geq  \tilde{g}_0(x) \geq \varepsilon
\qquad \mbox{for all}\quad |x|\leq \varepsilon\,,
$$
and sufficiently small $\varepsilon>0$.
Since the support of $\tilde{\mu}_0(dx)$ is a 
group (see \cite{MR0517224} or \cite[Theorem~3]{MR1713341}), then it has to equal to $\Rd$.
Therefore $\mu_0(B_{r_2})=\tilde{\mu}_0(B_{r_2}-y_0)>0$.
This ends the proof of {\it Step 10.}

Now, the following is true.

\noindent
{\it Claim.
For every $\theta>0$ there 
are 
 $a_0=a_0(d,\alpha_3,c_3,a_2)$ and
 $\tilde{c}_1=\tilde{c}_1(d,\alpha_3,c_3,T_3,a_1,a_2,\nu_s,\theta)>0$ such that
for all $t<1/h_s(T_3/a_0)$ and $|x|\leq\theta h_s^{-1}(1/t)$,
$$
p(t,x+t \drf_{[h_s^{-1}(1/t)]}) \geq \tilde{c}_1 \left[ h^{-1}_s(1/t)\right]^{-d}\,.
$$
If $T_3=\infty$, we also have $\tilde{c}_1>0$.
}

Indeed, it holds by Proposition~\ref{prop:aux_lower}
with $\theta_1=\theta$, $\theta_2=16(1+2d)a_2$ and $\Theta_t=t \drf_{[h_s^{-1}(1/t)]}$, the application of~\eqref{ineq:dif_br}
and  {\it Step 10.}
with $r=r_0$, $r_1=c_{p_1}$.

We prove 
the final statement by extending the time horizon.
In view of the {\it Claim} we only have to consider the case $T_3<\infty$.
% in $\Cc$.
Let $t_0=(1/2)/ h_s(T_3/a_0)$
with $a_0=a_0(d,\alpha_3,c_3,a_2)\geq 1$ taken from the {\it Claim}.
It suffices to examine $t\in [kt_0,(k+1)t_0)$, $k\in\N$.
For $k=1$ the statement holds by
the {\it Claim}.
We show by induction that the statement is true for all $k\geq 2$.
By Chapman-Kolmogorov equation 
we have for $\bar{x}:=x+t \drf_{[h_s^{-1}(1/t)]}- t_0 \drf_{[h_s^{-1}(1/t_0)]}- (t-t_0) \drf_{[h_s^{-1}(1/(t-t_0))]}$,
\begin{align*}
&p(t, x+t \drf_{[h_s^{-1}(1/t)]})\\
&\quad \geq  \int_{|y|<h_s^{-1}(1/t_0)} p(t-t_0,y + (t-t_0) \drf_{[h_s^{-1}(1/(t-t_0))]}) \,
p(t_0, \bar{x}-y +t_0 \drf_{[h_s^{-1}(1/t_0)]})\,dy\,.
\end{align*} 
In what follows we find the upper bound of $|\bar{x}-y|$. 
By \eqref{ineq:dif_br} and $t_0\leq t-t_0$ we have
\begin{align*}
&|t \drf_{[h_s^{-1}(1/t)]}- t_0 \drf_{[h_s^{-1}(1/t_0)]}- (t-t_0) \drf_{[h_s^{-1}(1/(t-t_0))]}|\\
&=|(t-t_0)(\drf_{[h_s^{-1}(1/t)]}-\drf_{[h_s^{-1}(1/(t-t_0))]}) + t_0 ( \drf_{[h_s^{-1}(1/t)]}-\drf_{[h_s^{-1}(1/t_0)]})|\\
&\leq h_s^{-1}(1/t) \big[ (t-t_0) h(h_s^{-1}(1/(t-t_0)))+  t_0 h(h_s^{-1}(1/t_0)) \big]\\
&\leq h_s^{-1}(1/t)\, t h(h_s^{-1}(1/t_0))
\leq h_s^{-1}(1/t) ( k + 1 ) a_2(c_d/c_3)\,.
\end{align*}
We note that by Lemma~\ref{lem:equiv_scal_h} and the
comparability of $h$ and $h_s$,
$\Aa$ holds for $h_s$
with $\alpha_{h_s}=\alpha_3$, $\theta_{h_s}=T_3$ and $C_{h_s}= a_2(c_d/c_3)^2/a_1$.
We extend this scaling as in Remark~\ref{rem:rozciaganie} using
$R:=h_s^{-1}(1/[(k+1)t_0])$.
Then $\Aa$ holds for $h_s$ with 
$\alpha_{h_s}=\alpha_3$, $\tilde{\theta}_{h_s}=R$ and $\widetilde{C}_{h_s}$ (resulting from the extension). In particuliar,
$1/t > h_s(\tilde{\theta}_{h_s})$
and by Lemma~\ref{lem:equiv_scal_h},
$$
h_s^{-1}(1/t)\leq (\widetilde{C}_{h_s} t/t_0 )^{1/\alpha_{h_s}} h_s^{-1}(1/t_0)
\leq ((k+1) \widetilde{C}_{h_s})^{1/\alpha_{h_s}} h_s^{-1}(1/t_0)\,.
$$
Therefore 
$|\bar{x}-y|\leq \theta_1 h_s^{-1}(1/t_0)$, 
where
$\theta_1=\theta_1(d,\alpha_3,c_3,T_3,a_1,a_2,\nu_s,k,\theta)$.
Then by
the {\it Claim},
$$
p(t_0, \bar{x}-y +t_0 \drf_{[h_s^{-1}(1/t_0)]})\geq \tilde{c}_1 \left[ h^{-1}_s(1/t_0)\right]^{-d}.
$$
Since $t-t_0 \in [(k-1)t_0,k t_0)$ and $|y|<h_s^{-1}(1/t_0)\leq h_s^{-1}(1/(t-t_0))$, by the induction hypothesis,
$$
p(t-t_0,y + (t-t_0) \drf_{[h_s^{-1}(1/(t-t_0))]})\geq \tilde{c}_{k-1} \left[ h^{-1}_s(1/(t-t_0))\right]^{-d}.
$$
Finally,
$$
p(t, x+t \drf_{[h_s^{-1}(1/t)]})\geq   \tilde{c}_1 \omega_d \tilde{c}_{k-1}  \left[ h^{-1}_s(1/(t-t_0))\right]^{-d}
\geq \tilde{c}_k  \left[ h^{-1}_s(1/t)\right]^{-d}.
$$

\qed

\begin{theorem}\label{thm:low_jump_impr_1}
Assume 
that $\Cc$ holds with $\alpha_3\geq 1$ and
 $A=0$.
 Then
for all $T, \theta>0$ there 
is
a constant
$\tilde{c}=\tilde{c}(d,\alpha_3,c_3,T_3,\LM,T,\theta)>0$ such that
for all $0<t<T$ and $|x|\leq\theta h^{-1}(1/t)$,
$$
p(t,x+t \drf_{[h^{-1}(1/t)]}) \geq \tilde{c} \left[ h^{-1}(1/t)\right]^{-d}\,.
$$
If $T_3=\infty$, then we can take $T=\infty$ with $\tilde{c}>0$.
\end{theorem}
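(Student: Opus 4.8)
The plan is to re-run the proof of Theorem~\ref{thm:low_jump_impr} almost verbatim, applying the decomposition of Section~\ref{sec:decomposition} with the trivial choice $\nu=\LM$, so that $a_1=a_2=1$, $h_\nu=h$, and ${\rm Re}[\LCh_\nu]={\rm Re}[\LCh]$; thus every hypothesis of that section holds automatically. Consequently {\it Steps~1.--9.}, Lemmas~\ref{lem:first_1}--\ref{lem:first_4} and Proposition~\ref{prop:aux_lower} apply unchanged, and the only thing to supply is the analogue of {\it Step~10.}: for every $r,r_1>0$,
$$
\inf_{\mu\in\mathcal{X}(T_3,a_0,r)}\mu(B_{r_1})>0 .
$$
Granting this, Proposition~\ref{prop:aux_lower} with $\theta_1=\theta$, $\theta_2=16(1+2d)$ and $\Theta_t=t\drf_{[h^{-1}(1/t)]}$ (legitimate by \eqref{ineq:dif_br}) gives the asserted estimate for $t<1/h(T_3/a_0)$, and the extension to all $t<T$, and to $T=\infty$ when $T_3=\infty$, is obtained by the same Chapman--Kolmogorov induction as in Theorem~\ref{thm:low_jump_impr}, with $h$ and the scaling $\Aa$ (valid with exponent $\alpha_3$ by Theorem~\ref{thm:equiv} and Lemma~\ref{lem:equiv_scal_h}) in the roles of $h_s$ and its scaling, Remark~\ref{rem:rozciaganie} being used to stretch the scaling range. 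So everything reduces to the positivity above, which is the only place where $\alpha_3\geq1$ (rather than the symmetry of $\nu_s$) is used.

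First I would record two consequences of $\Cc$ together with $\alpha_3\geq1$. For each unit vector $v$, the concentration function $h_1$ of the projection $\Pi_1Y$ onto $\R v$ satisfies $h_1\approx h$ on $(0,T_3)$ — this is exactly condition $\Ch$, proved in Lemma~\ref{lem:C8} via \eqref{ineq:comp_TJ} — hence $h_1$ fulfils $\Aa$ with exponent $\alpha_3\geq1$; since the Gaussian part of $\Pi_1Y$ vanishes, Lemma~\ref{lem:int_infty} gives $\int_{|\langle v,z\rangle|<1}|\langle v,z\rangle|\,\LM(dz)=\infty$, and therefore (the part over $\{|z|\geq\varepsilon\}$ being finite) $\int_{|z|<\varepsilon}|\langle v,z\rangle|\,\LM(dz)=\infty$ for every $\varepsilon>0$ and every unit $v$; in particular $h_1(r)>0$ forces $\supp\LM$, and $\supp(\LM|_{B_\lambda})$, not to lie in any hyperplane. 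Thus each $Z^{2.\lambda}$ occurring in the decomposition has vanishing Gaussian part, a L\'evy measure of infinite variation, and a support spanning $\Rd$; so each $\mu\in\mathcal{X}(T_3,a_0,r)$ is the law of an infinitely divisible random variable with those three properties, and by the structure theory of supports of infinitely divisible laws (\cite{MR0517224}, \cite{MR1713341}) such a support is an affine subspace of $\Rd$ — which, since these $\mu$ are absolutely continuous (their densities are rescalings of $p_{2.\lambda}(t,\cdot)$), must be all of $\Rd$. Hence $\mu(B_{r_1})>0$ for each fixed $\mu$; the outstanding issue is the uniformity in $\lambda$.

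For the uniform bound I would argue by compactness exactly as in {\it Step~10.}: by Lemma~\ref{lem:t_0} the family $\mathcal{X}(T_3,a_0,r)$ is tight, so a minimizing sequence has a weak subsequential limit $\mu_0$, which by Lemmas~\ref{lem:first_3} and~\ref{lem:first_4} is infinitely divisible and absolutely continuous with a continuous density, hence genuinely $d$-dimensional; by the Portmanteau theorem it suffices to prove $\supp\mu_0=\Rd$. \textbf{The main obstacle is that the ``infinite variation in every direction'' of the approximating laws need not pass to the weak limit, because the mass of their L\'evy measures can escape to the origin.} To handle this I would write the L\'evy measure of $\mu_0$'s $n$-th approximant as the rescaling $\sigma_n(dw)=\tfrac{t_n}{2}\LM(\lambda_n dw)$ restricted to $B_1$ (with $\lambda_n=a_0h^{-1}(1/t_n)$) and track, for each direction $v$, the small-scale second moment, which by $A=0$ equals $\tfrac{t_n\delta^2}{2}K(\delta\lambda_n)$ (and its obvious projected variant): using $K\approx h$ on $(0,T_3)$ (condition $\Ac$) and the scaling of $K$ (condition $\Ae$ of Lemma~\ref{lem:equiv_scal_h}) together with $t_n\approx1/h(\lambda_n)$, one checks that, for each fixed $\delta$, this quantity stays bounded below away from $0$ uniformly in $n$; hence the ``spread'' of the approximants in direction $v$ cannot entirely vanish, and in the limit it survives either inside the L\'evy measure $\rho_0$ of $\mu_0$ (forcing infinite variation of $\rho_0$ in direction $v$) or inside the Gaussian part $A_0$ (forcing $\langle v,A_0v\rangle>0$). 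In either case the support-semigroup of $\mu_0$ is a full line in direction $v$; combining over all $v$ — using once more the hyperplane-exclusion from $\Cc$ to rule out $\supp\rho_0$ and $\mathrm{range}(A_0)$ being jointly carried by a proper subspace — makes this support-semigroup a linear subspace, which, carrying an absolutely continuous measure, equals $\Rd$. This yields $\mu_0(B_{r_1})>0$, closes the induction step, and completes the proof; the dependence of $\tilde c$ on $\LM$ is precisely the by-product of this compactness argument.
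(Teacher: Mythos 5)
Your proposal mirrors the paper's own proof almost exactly outside Step~10: the decomposition with $\nu=\LM$, $a_1=a_2=1$, the use of Steps~1--9 of Section~4, Proposition~\ref{prop:aux_lower} with $\Theta_t=t\drf_{[h^{-1}(1/t)]}$, and the Chapman--Kolmogorov induction for the time extension; you also correctly isolate the positivity $\inf_{\mu\in\mathcal{X}(T_3,a_0,r)}\mu(B_{r_1})>0$ as the one place where $\alpha_3\geq1$ actually bites. The genuinely different ingredient is your argument for that positivity. The paper observes that the characteristic exponent of $\mu_n$ is $-i\langle x,y_n\rangle+t_n\LCh_{2.\lambda_n}(x/\lambda_n)$, that Lemma~\ref{lem:first_2} gives a uniform $\Cc$ for $\LCh_{2.\lambda}$, that Sato's convergence-of-exponents theorem (\cite[(8.11)]{MR1739520}) passes $\Cc$ to the limiting exponent $\LCh_0$ with the same $\alpha_3$, and that the lemmas already in stock (\ref{lem:C3gauss}, \ref{lem:C8}, \ref{lem:int_infty}) then hand over the full support of $\mu_0$. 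Working at the level of exponents makes the Gaussian/jump transfer of mass at the origin --- exactly the obstacle you flag in boldface --- a non-issue, because it is built into the formulation of $\Cc$.

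Your alternative of tracking truncated second moments of the approximant L\'evy measures is the right idea in spirit, but the key computation is not correct as stated. The quantity you need, $\int_{|w|<\delta}|\langle v,w\rangle|^2\,\sigma_n(dw)$, equals $\tfrac{t_n}{2\lambda_n^2}\int_{|z|<\delta\lambda_n}|\langle v,z\rangle|^2\,\LM(dz)$, which is neither $\tfrac{t_n\delta^2}{2}K(\delta\lambda_n)$ nor $\tfrac{t_n\delta^2}{2}K_1(\delta\lambda_n)$; the projected $K_1$ integrates over $\{|\langle v,z\rangle|<\delta\lambda_n\}$, a strictly larger set, and the naive lower bound $(\delta\lambda_n)^2\bigl(K_1(\delta\lambda_n)-h(\delta\lambda_n)\bigr)$ is negative because $K_1\leq h_1\leq h$. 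To make the estimate go through you must instead pass to the pushed-forward one-dimensional measure $\pi_v\sigma_n$, compare with $K_1(\delta\lambda_n)$, and subtract an error $\lesssim(\delta\lambda_n)^2 h(\lambda_n)$, which the scaling $\Aa$ absorbs only for $\delta$ below a fixed small threshold. After that repair, a layer-cake argument does convert the surviving bound $\gtrsim\delta^{2-\alpha_3}$ (with $\alpha_3\geq1$) into infinite variation of $\rho_0$ in every direction in which $\langle v,A_0v\rangle=0$, and the support-structure theory concludes. So your route is workable, but it requires more care than you give it, and it reproves --- at the level of triplets, with new calculations --- information that the paper reads off directly from $\LCh_0$ once $\Cc$ has been transferred to the limit.
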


\pf
Consider the decomposition of $Y$ introduced in Section~\ref{sec:decomposition} with $\nu=\LM$ and $a_1=a_2=1$.
Then the proof is the same as that of
Theorem~\ref{thm:low_jump_impr},
only the justification of {\it Step 10.} is different,
because instead of using the symmetry of $\nu$ we take advantage of
the assumption that $\alpha_3\geq 1$.

\noindent
{\it Step 10.}
Let  $a_0$ be taken from Lemma~\ref{lem:first_1}.
We show that for every $r,r_1>0$,
$$\inf_{\mu \in \mathcal{X}(T_3,a_0,r)
} \mu(B_{r_1})=c_{st10}  >0\,,$$
with $c_{st10}=c_{st10}(T_3,a_0,r,r_1)$.
Let $\mu_n$, $\mu_0$ and $g_0(x)$ be like in Lemma~\ref{lem:first_4}.
We denote by
$\LCh_n(x)$ and
$\LCh_0(x)$ the characteristic exponents
corresponding to $\mu_n$ and $\mu_0$.
By \cite[(8.11)]{MR1739520}
we have that
${\rm Re}[\LCh_n(x)]$
converges to ${\rm Re}[\LCh_0(x)]$ 
and
$\LCh_n^*$ converges to $\LCh_0^*$.
Since
${\rm Re}[\LCh_n(x)]= t {\rm Re} [\LCh_{2.\lambda}(x/\lambda)]$
and 
$\LCh_n^*(r)= t \LCh_{2.\lambda}^*(r/\lambda)$,
by Lemma~\ref{lem:first_2} we get 
that
$\Cc$ holds for $\LCh_0$
with $T_0=c_{p_2}$, $c_0=c_0(c_3,a_2)$
and $\alpha_0=\alpha_3\geq 1$.
If it happens that $\LCh_0$
has non-zero Gaussian part, then
Lemma~\ref{lem:C3gauss}
guarantees that
the support of the measure $\mu_0$ equals $\Rd$,
which ends the proof in that case.
 Suppose that $\LCh_0$ has zero Gaussian part
and denote by $\LM_0(dz)$ the corresponding L{\'e}vy measure.
We will justify that for every $x\in\Rd$, $x\neq 0$,
\begin{align}\label{eq:infty_x}
\int_{|z|<1}|\!\left<x,z\right>\!|\,\LM_0 (dz)=\infty\,.
\end{align}
Let $\Pi_1$ be a projection on a subspace spanned by $x/|x|$.
Then
\begin{align*}
\int_{|z|<1}|\!\left<x/|x|,z\right>\!|\,\LM_0(dz)
\geq  \int_{|\Pi_1 z|<1} 
|\Pi_1 z|\,\LM_0 (dz)
-\LM_0 (B_1^c)
=\int_{|z|<1} 
|z|\,\LM_1(dz) -\LM_0 (B_1^c)\,,
\end{align*}
where $\LM_1(dz)$
is a L{\'e}vy measure of 
an infinitely divisible distribution 
that is the $\Pi_1$ projection of $\mu_0$ (see \cite[Proposition~11.10]{MR1739520}).
We denote by $h_1$
the concentration function for $\LM_1(dz)$.
By 
$\Cc$ for $\LCh_0$ and
Lemma~\ref{lem:C8} 
we get $\Aa$ for $h_1$ with $\alpha_1\geq 1$. Then
\eqref{eq:infty_x} follows from
Lemma~\ref{lem:int_infty}.
Finally,
by
\cite[Corollary on page 232]{MR923491}
or
\cite[Theorem~3]{MR1713341}
the support of $\mu_0$ is $\Rd$. This ends the proof.
\qed

\begin{remark}\label{rem:after_low}

\noindent
(i)
One of the main improvements of Theorem~\ref{thm:low_jump_impr}
and~\ref{thm:low_jump_impr_1}
in comparison to known results is that
we can arbitrarily choose $\theta>0$. 
We take advantage of that in Proposition~\ref{prop:cone}.

\noindent
(ii)
The assumption $a_1 \nu_s(dz)\leq \LM(dz)$ 
of Theorem~\ref{thm:low_jump_impr}
cannot by replaced by a weaker condition $a_1 {\rm Re}[\LCh_s (x)] \leq {\rm Re}[\LCh (x)]$, because the latter and other assumptions of the theorem are satisfied for $\alpha$-stable subordinators (take $\LCh_s$ to be 
the characteristic exponent of the isotropic $\alpha$-stable process), but
the statement is not true for that process. Namely,
if $\theta>0$ is large enough, then
$p(t,x+t \drf_{[h_s^{-1}(1/t)]})=0$ for some $0<t<T$ and $x\in\RR$ satisfying 
$|x|\leq\theta h_s^{-1}(1/t)$.

\noindent
(iii)
The assumption ${\rm Re}[\LCh (x)] \leq a_2 \,{\rm Re}[\LCh_s (x)]$ 
of Theorem~\ref{thm:low_jump_impr}
 holds if a stronger condition $\LM(dz)\leq a_2\, \nu_s(dz)$ is satisfied, but the latter is much more restrictive (see also Example~\ref{ex:stable_little_sym}).

\end{remark}

\section{Examples and applications}\label{sec:ex_ap}

We apply Theorem~\ref{thm:low_jump_impr} to a L{\'e}vy process $Y$ in $\Rd$ which is the sum of the (symmetric)
cylindrical $\alpha$-stable process and any arbitrarily chosen independent $\alpha$-stable process $\alpha\in (0,2)$.
\begin{example}\label{ex:stable_little_sym}
Let $\drf\in\Rd$ and 
define
$$
\LM(dz)= \nu_s(dz)+\nu_a(dz)\,,
$$
where for $\alpha \in (0,2)$,
$$
\nu_s(dz)=\mathcal{A}_{\alpha}\sum_{k=1}^d |z_k|^{-1-\alpha}dz_k \prod_{\substack{i=1\\ i\neq k}}^d
\delta_{\{0\}}(d z_i)\,, \qquad z=(z_1,\ldots,z_d)\,,
$$
and
\begin{align}\label{def:stable_arbitrary}
\nu_a(B) \approx \int_{S} \lambda(d\xi) \int_0^{\infty}\ind_{B}(r\xi) \frac{dr}{r^{1+\alpha}}\,,\qquad B\in \mathcal{B}(\Rd)\,.
\end{align}
Here 
$\mathcal{A}_{\alpha}=2^{\alpha}\Gamma((1+\alpha)/2)/(\pi^{1/2}|\Gamma(-\alpha/2)|)$,
$S=\{x\in\Rd \colon |x|=1\}$
and $\lambda$ is a finite measure on~$S$.
Then
Theorem~\ref{thm:low_jump_impr} applies
 to a L{\'e}vy process $Y$ with the generating triplet $(0,\LM,b)$.
Indeed, first note that $\nu_s$ is a special case of $\nu_a$ with $\lambda$
having properly chosen atoms on 
the sphere
and that 
\begin{align}\label{eq:stable_h}
h_a(r) \approx  r^{-\alpha} \lambda(S),\qquad r>0\,.
\end{align}
Therefore, by $\nu_s(dz)\leq \LM(dz)$ and \eqref{ineq:comp_TJ} we get
$$
d^{-\alpha/2} |x|^{\alpha}\leq |x_1|^{\alpha}+\ldots+|x_d|^{\alpha} = {\rm Re}[\LCh_s (x)] \leq {\rm Re}[\LCh (x)]\leq \LCh^* (x) \leq 2 h(1/|x|)\leq c \,|x|^{\alpha}\,,
$$
for $c$ that depends only on $\alpha$ and $\lambda$.
This shows that 
the assumptions of Theorem~\ref{thm:low_jump_impr}
are satisfied. In particular $\Cc$ holds
and $T_3=\infty$.
We emphasize that
for such $\LM$ one can rarely expect to have
 $\LM(dz)\leq c \,\nu_s(dz)$ 
for some constant $c$. 
 The latter as an assumption
 would dramatically reduce admissible measures $\lambda$.
\end{example}

It has been announced in the introduction
that any $\alpha$-stable processes $\alpha\in (0,2)$ in one dimension satisfies $\Cc$. It follows from
Remark~\ref{rem:d=1}
and
\eqref{eq:stable_h}.

\begin{example}
Let $d=1$ and $Y$ be a L{\'e}vy process with the generating triplet $(0,\LM,0)$, where
$$
\LM(dx)=  |x|^{-2} \ind_{x< 0} \,dx\,.
$$
Note that $\LM(dx)$ is of the form \eqref{def:stable_arbitrary}
with $\alpha=1$ and $\lambda(d\xi)=\delta_{\{-1\}}(d\xi)$, i.e.,
$Y$ is a (one-sided) $1$-stable process.
Then
\begin{align*}
\PP(Y_t\in (-\infty,0))\longrightarrow 0\,, \quad \mbox{as}\quad t\to 0^+\,.
\end{align*}
Indeed, using the notation of 
\cite[Theorem~1]{MR2041833}
we have $M(x)=T(x)=-D(x)=x^{-1}$, $A(x)=-1-\ln(x)$ and $U(x)=2x$. Thus $A(x)/\sqrt{U(x)M(x)} \to +\infty$ as $x\to 0^+$.
\end{example}

The above example explains a restriction to $\alpha_3>1$ in the following result.

\begin{proposition}\label{prop:cone}
Assume 
that $\Cc$ holds with $\alpha_3> 1$ and
 $A=0$.
For $\lambda>0$ let 
$$
C_{\lambda}=\{x\in\Rd\colon x_d > \lambda |\tilde{x}| \,,\, \tilde{x}=(x_1,\ldots,x_{d-1},0)\}.
$$
For every $T>0$ there is 
 a constant
 $c=c(d,\alpha_3,c_3,T_3,N,T,|b|)$
 such that for every
orthogonal matrix $O$
and
for all $t<T$,
\begin{align*}
 \PP(X_t\in O C_{\lambda}) \geq c>0 \,.
\end{align*}
\end{proposition}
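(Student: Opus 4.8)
The plan is to feed the near-diagonal lower bound of Theorem~\ref{thm:low_jump_impr_1} into an elementary geometric estimate for cones. Write $p(t,\cdot)$ for the transition density of the process $Y$ of this section (denoted $X$ in the statement; the density exists by $\Cc$ and Theorem~\ref{thm:equiv}), and set $\rho_t:=h^{-1}(1/t)$ and $\Theta_t:=t\,\drf_{[h^{-1}(1/t)]}$. The hypothesis $\alpha_3>1$ enters only through the following drift bound, which I would establish first: there is a constant $M=M(d,\alpha_3,c_3,T_3,\LM,T,|\drf|)\geq 1$ with $|\Theta_t|\leq M\,\rho_t$ for all $0<t<T$. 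Indeed, by Corollary~\ref{cor:drf_r_drf} together with \eqref{ineq:dif_br} (extending the scaling range in $\Cc$ up to $h^{-1}(1/T)$ via Remark~\ref{rem:rozciaganie} when $T_3<\infty$) one gets $|\drf_{\rho_t}-\drf|\leq c\,(1+\rho_t h(\rho_t))=c\,(1+\rho_t/t)$, so $t\,|\drf_{\rho_t}-\drf|\leq c\,(t+\rho_t)$; on the other hand $\Aa$ holds with exponent $\alpha_3$, so Lemma~\ref{lem:equiv_scal_h} gives $\rho_t\geq c'\,t^{1/\alpha_3}$, and since $\alpha_3>1$ and $t\mapsto\rho_t$ is continuous on $(0,T)$ this yields $t\leq c''\rho_t$ for $0<t<T$; combining, $|\Theta_t|\leq t|\drf|+t|\drf_{\rho_t}-\drf|\leq M\rho_t$.

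The geometric step is this. Fix $\lambda>0$, so that $C_\lambda$ is the convex cone with vertex $0$, axis $e_d$ and half-aperture $\beta=\arctan(1/\lambda)\in(0,\pi/2)$; for any orthogonal matrix $O$ the set $OC_\lambda$ is the convex cone with the same aperture and axis $Oe_d$. For every $\rho>0$ the open ball $B(\rho\,Oe_d,\rho\sin\beta)$ lies inside $OC_\lambda$, because its centre sits on the axis at distance $\rho$ from the vertex and the distance from that centre to $\partial(OC_\lambda)$ equals $\rho\sin\beta$; moreover, whenever $|w|\leq M\rho$ this same ball is contained in $B(w,\theta\rho)$ as soon as $\theta\geq 1+M+\sin\beta$. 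Crucially, neither the radius $\rho\sin\beta$ nor any of these inclusions depends on the choice of $O$.

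Now fix $\theta:=1+M+\sin\beta$. This is legitimate precisely because Theorem~\ref{thm:low_jump_impr_1} allows $\theta$ to be arbitrary (the point stressed in Remark~\ref{rem:after_low}(i)); let $\tilde c=\tilde c(d,\alpha_3,c_3,T_3,\LM,T,\theta)>0$ be the constant it produces, so $p(t,z)\geq\tilde c\,\rho_t^{-d}$ for every $z\in B(\Theta_t,\theta\rho_t)$ and every $0<t<T$. Applying the geometric step with $w=\Theta_t$ (valid since $|\Theta_t|\leq M\rho_t$) and $\rho=\rho_t$ gives $B(\rho_t\,Oe_d,\rho_t\sin\beta)\subseteq OC_\lambda\cap B(\Theta_t,\theta\rho_t)$, whence for every orthogonal $O$ and every $0<t<T$,
\begin{equation*}
\PP(Y_t\in OC_\lambda)=\int_{OC_\lambda}p(t,z)\,dz\geq\int_{B(\rho_t Oe_d,\,\rho_t\sin\beta)}p(t,z)\,dz\geq\tilde c\,\rho_t^{-d}\cdot\frac{\omega_d}{d}\,(\rho_t\sin\beta)^d=\frac{\tilde c\,\omega_d}{d}\sin^d\beta=:c>0.
\end{equation*}
The $\rho_t$'s cancel, so $c$ is independent of $t$ and of $O$ and depends only on $d,\alpha_3,c_3,T_3,\LM,T,|\drf|$ (and the fixed $\lambda$); the case $T_3=\infty$, which permits $T=\infty$, is identical, since then Theorem~\ref{thm:low_jump_impr_1} and Corollary~\ref{cor:drf_r_drf} hold without restriction on $t$.

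The main obstacle is the drift bound $|\Theta_t|\leq M\rho_t$: one must make sure the internal centring $t\,\drf_{[h^{-1}(1/t)]}$ forced by Theorem~\ref{thm:low_jump_impr_1} stays within distance $O(h^{-1}(1/t))$ of the origin, uniformly on $(0,T)$, so that the near-diagonal region still meets every rotated cone in a ball of volume comparable to $[h^{-1}(1/t)]^d$. This is exactly where the strict inequality $\alpha_3>1$ is indispensable: for $\alpha_3=1$ the centring can be of strictly larger order than $h^{-1}(1/t)$, and, as the one-sided $1$-stable example just before the proposition shows, the conclusion then genuinely fails. The convex-geometry of cones and the final integration are routine.
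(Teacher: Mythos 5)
Your proof is correct and follows essentially the same route as the paper: bound $t\,\drf_{[h^{-1}(1/t)]}$ by $O(h^{-1}(1/t))$ using Corollary~\ref{cor:drf_r_drf}, Remark~\ref{rem:rozciaganie} and $\alpha_3>1$, invoke Theorem~\ref{thm:low_jump_impr_1} with a suitably large $\theta$, and integrate the resulting near-diagonal lower bound over a region of volume $\approx [h^{-1}(1/t)]^d$ lying in the cone. The only cosmetic difference is that you integrate over an inscribed ball $B(\rho_t O e_d,\rho_t\sin\beta)$ whereas the paper integrates over $OC_\lambda\cap B_{h^{-1}(1/t)}$; both give the same scale-invariant volume and both rely on the same near-diagonal estimate.
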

\pf
By Remark~\ref{rem:rozciaganie}
and Corollary~\ref{cor:drf_r_drf} 
there is $\theta_1=\theta_1(d,\alpha_3,c_3,T_3,h,T)$ such that
$
t |\drf_{[h^{-1}(1/t)]}-b|
\leq \theta_1 h^{-1}(1/t)
$ for all $t<T$.
Using Remark~\ref{rem:rozciaganie} and $\Ab$ we also get for
$\theta_2=\theta_2(c_3,T_3,h,T,|b|)$ and all $t<T$,
 that 
$
|tb| \leq \theta_2 \,h^{-1}(1/t)
$.
Let $|x|\leq h^{-1}(1/t)$.
Then $\bar{x}=x-t \drf_{[h^{-1}(1/t)]}$ satisfies
$|\bar{x}|\leq \theta h^{-1}(1/t)$ for all $t<T$ with 
$\theta=\theta_1+\theta_2$.
By
Theorem~\ref{thm:low_jump_impr_1} we have
$$
p(t,x)=p(t,\bar{x}+t \drf_{[h^{-1}(1/t)]})\geq \tilde{c} \left[h^{-1}(1/t)\right]^{-d}.
$$
Finally,
\begin{align*}
 \PP(X_t\in OC_{\lambda}) \geq \int_{O C_{\lambda}\cap B_{h^{-1}(1/t)}} p(t,x) \,dx
\geq  \tilde{c}
\left[h^{-1}(1/t)\right]^{-d} |OC_{\lambda} \cap B_{h^{-1}(1/t)}|
= c>0\,.
\end{align*}
\qed

Define the firs exit time from an open set $D$ by
$$\tau_D=\inf\{t>0:X_t\in D^c\}.$$
\begin{corollary}Assume 
that $\Cc$ holds with $\alpha_3> 1$. Let 
an open and bounded set $D\subset \Rd$ have the outer cone property. Then
every point from $D^c$ is regular for $D$, i.e., $\PP^x(\tau_D=0)=1$ for every $x\in D^c$.
\end{corollary}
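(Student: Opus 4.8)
The plan is to deduce the corollary from the uniform cone lower bound of Proposition~\ref{prop:cone} together with Blumenthal's zero--one law, via the classical exterior-cone argument. Since $D$ is open we have $D^c=\operatorname{int}(D^c)\cup\partial D$, and for $x\in\operatorname{int}(D^c)$ there is nothing to do: under $\PP^x$ the paths are right-continuous and start at $x$, so the process stays in the open set $\operatorname{int}(D^c)\subseteq D^c$ for all sufficiently small $t>0$ almost surely, whence $\PP^x(\tau_D=0)=1$ directly. It thus remains to treat $x\in\partial D$.

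Fix $x\in\partial D$. By the outer cone property (read uniformly) there are $\lambda>0$ and $r_0>0$, independent of $x$, and an orthogonal matrix $O$ depending on $x$, such that the truncated cone $x+(OC_\lambda\cap B_{r_0})$ is contained in $D^c$. Using spatial homogeneity of the L\'evy process and $t>0$, I would bound
\begin{align*}
\PP^x(\tau_D\leq t)\geq\PP\big(X_t\in OC_\lambda\cap B_{r_0}\big)\geq\PP\big(X_t\in OC_\lambda\big)-\PP\big(|X_t|\geq r_0\big).
\end{align*}
Proposition~\ref{prop:cone}, applied with a fixed time horizon $T$, bounds the first term below by a constant $c>0$ that is uniform over all orthogonal matrices $O$; and since $X_t\to0$ in probability as $t\to0^+$ there is $t_0=t_0(r_0)>0$ with $\PP(|X_t|\geq r_0)\leq c/2$ for $t<t_0$. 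Hence $\PP^x(\tau_D\leq t)\geq c/2$ for all $t<t_0\wedge T$, and letting $t\downarrow0$ along $t=1/n$ (continuity from above on the decreasing events $\{\tau_D\leq1/n\}$) yields $\PP^x(\tau_D=0)\geq c/2>0$.

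It remains to upgrade this positive probability to $1$. The event $\{\tau_D=0\}=\bigcap_n\{\tau_D\leq1/n\}$ lies in the germ $\sigma$-field, $\tau_D$ being the hitting time of the closed set $D^c$ by the L\'evy process, so Blumenthal's zero--one law forces $\PP^x(\tau_D=0)\in\{0,1\}$ and hence $\PP^x(\tau_D=0)=1$. Since $\lambda$, $r_0$, the constant $c$ of Proposition~\ref{prop:cone} and the resulting $t_0$ are all independent of the boundary point, this holds at every $x\in\partial D$, hence at every $x\in D^c$. If $A\neq0$, then $\det A\neq0$ by Lemma~\ref{lem:C3gauss} and the same argument goes through with the Gaussian on-diagonal estimate of Proposition~\ref{thm:low_gauss} in place of Proposition~\ref{prop:cone}, since the density is then bounded below on a ball of radius comparable to $\sqrt t$ and one again obtains $\PP(X_t\in OC_\lambda\cap B_{r_0})\geq c>0$ for small $t$. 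The only point requiring genuine care is the uniformity of $c$ across $\partial D$: this is precisely why the outer cone condition must be taken with a single aperture $\lambda$ and a single height $r_0$ for all of $\partial D$, and why the uniformity over the rotation $O$ already built into Proposition~\ref{prop:cone} is indispensable; everything else is standard.
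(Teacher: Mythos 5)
Your proof is correct and follows the same route as the paper's: reduce to $x\in\partial D$ by right-continuity, bound $\PP^x(\tau_D\leq t)\geq\PP^x(X_t\in D^c)$ by the probability of landing in the exterior cone, invoke Proposition~\ref{prop:cone}, and finish with Blumenthal's $0$--$1$ law. You in fact supply two details that the published proof glosses over: the passage from the \emph{truncated} cone $OC_\lambda\cap B_{r_0}$ actually furnished by the exterior-cone condition to the \emph{full} cone $OC_\lambda$ appearing in Proposition~\ref{prop:cone}, handled cleanly via $\PP(|X_t|\geq r_0)\to0$ as $t\downarrow0$; and the case $A\neq0$, which Proposition~\ref{prop:cone} explicitly excludes but the corollary's hypotheses do not, and which you correctly route through Lemma~\ref{lem:C3gauss} together with the Gaussian near-diagonal bound of Proposition~\ref{thm:low_gauss}.
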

\pf
By the right continuity of paths $X_t$ we may and do assume that $x\in\partial D$.
For every $t>0$, 
$$\PP^x(\tau_D\leq t)\geq \PP^x(X_t\in D^c).$$
By the outer cone property and Proposition \ref{prop:cone} we get
$$\PP^x(\tau_D\leq t)\geq c,\quad t<T.$$
This implies that 
$$\PP^x(\tau_D=0)\geq c>0.$$
Applying Blumenthal's $0-1$ law ends the proof.
\qed

\vspace{.1in}

%%%%%%%%%%%%%%%%%%%%%%%%%%%%%%%%%%
%%%%%%%%%%%%%%%%%%%%%%%%%%%%%%%%%%
%%%%%%%%            References
%%%%%%%%%%%%%%%%%%%%%%%%%%%%%%%%%
%%%%%%%%%%%%%%%%%%%%%%%%%%%%%%%%%%

\small
%\bibliographystyle{abbrv}
%\bibliography{lp-cf-bib}

\end{document}